\newtheorem{thm}{Theorem}[section]
\newtheorem{lem}[thm]{Lemma}
\newtheorem{prop}[thm]{Proposition}
\newtheorem*{thm A}{Theorem A}
\newtheorem*{thm B}{Theorem B}
\theoremstyle{definition}
\newtheorem{defn}[thm]{Definition}
\newtheorem{problem}[thm]{Problem}
\theoremstyle{remark}
\newtheorem{rem}[thm]{Remark}
\newtheorem{conv}[thm]{Convention}
\newcounter{numl}
\newcommand{\labelnuml}{\textup{(\roman{numl})}}
\newenvironment{numlist}{\begin{list}{\labelnuml}%
{\usecounter{numl}\setlength{\leftmargin}{0pt}%
\setlength{\itemindent}{2\parindent}%
\setlength{\itemsep}{\smallskipamount}\def
\makelabel ##1{\hss \llap {\upshape ##1}}}}{\end{list}}
\def\cal{\mathcal}
\def\bb{\mathbb} 
\def\a{\alpha }
\def\d{\delta }
\def\D{\Delta}
\def\e{\epsilon }
\def\l{\lambda }
\def\o{\omega }
\def\s{\sigma }
\def\t{\theta }
\def\ot{\otimes }
\def\part{\partial }
\def\bpart{\bar\partial }
\def\w{\wedge }
\begin{document}


\title[Locally conformally symplectic structures] {Locally conformally symplectic  structures on compact  non-K\"ahler complex surfaces}

\author[V. Apostolov]{Vestislav Apostolov} \address{Vestislav Apostolov \\
D{\'e}partement de Math{\'e}matiques\\ UQAM\\ C.P. 8888 \\ Succ. Centre-ville
\\ Montr{\'e}al (Qu{\'e}bec) \\ H3C 3P8 \\ Canada}
\email{apostolov.vestislav@uqam.ca}

\author[G. Dloussky]{Georges Dloussky}\address{Georges Dloussky, Aix-Marseille University, CNRS, Centrale Marseille, I2M, UMR 7373, 13453\\
39 rue F. Joliot-Curie 13411\\
Marseille Cedex 13, France}
\email{georges.dloussky@univ-amu.fr}

\thanks{V.A. was supported in part by an NSERC discovery grant and is grateful to
the Universit\'e Aix-Marseille  and the Institute of Mathematics and Informatics of the Bulgarian Academy of
Sciences where a part of this project was realized. G.D. is grateful to the UQAM for the hospitality during the preparation of the work.  V.A. and G. D. were supported in part by the ANR grant  MNGNK-ANR-10-BLAN-0118. The authors are very grateful to Dimiter Vassilev for his invaluable help with the spectral theory of strongly elliptic operators, and to Dimitry Jakobson,  Fran\c{c}ois Hamel  and Fr\'ederic Rochon for sharing with us their expertise on spectral analysis and the theory of elliptic PDE's. They would also like to thank  Liviu Ornea and Misha Verbitsky to clarifying their results and giving an access to the unpublished  work \cite{OV4},  and to the referee for her/his valuable suggestions for improving the presentation.}

\date{\today}

\begin{abstract} 
We prove that every compact complex surface with odd first Betti number admits a locally conformally symplectic $2$-form which tames the underlying almost complex structure.
\end{abstract}

\maketitle


\section{Introduction}

It is a well-known result~\cite{siu, todorov, buchdahl, lamari} that a compact complex surface $S=(M,J)$ admits a K\"ahler metric if and only if its first Betti number $b_1(M)$ is even.   A cornerstone for the proof of this result is the fact, proved independently in \cite[Lemme~II.3]{gauduchon} and  \cite[p.~185]{HL} (see also \cite[p.~143, Prop.~1.6]{siu} for the case of a K3 surface),  that  $b_1(M)$ is even if and only if $M$ admits a symplectic form $\omega$ which {\it tames} $J$, in the sense that the $(1,1)$ part of $\omega$ is positive definite. This and the methods of proof in \cite{buchdahl,lamari} inspired the so-called ``{\it tamed to compatible}'' conjecture in symplectic geometry,  which asks whether an almost complex structure on $M$ which is tamed by a symplectic form admits a compatible symplectic form, see \cite{donaldson, taubes}.

\vspace{0.2cm} A natural extension of the theory of K\"ahler manifolds to the non-K\"ahlerian  complex case  can be obtained through the notion of  {\it locally conformally K\"ahler} metrics,  introduced and studied  in foundational work by F. Tricerri and I. Vaisman,  see e.g.~\cite{DO, OV} for an overview.  Recall that a {locally conformally K\"ahler} (or \emph{lcK}) metric on a complex manifold $X=(M,J)$ is defined by a positive-definite $(1,1)$-form $F$  satisfying  $dF= \theta \wedge F$ for a closed $1$-form $\theta$. The $1$-form $\theta$ is uniquely determined and is referred to as {\it the Lee form} of $F$. The corresponding Hermitian metric $g(\cdot, \cdot)= F(\cdot, J \cdot)$ defines a conformal class  $c$ on $M$. Changing the Hermitian metric $\tilde g = e^f g$ within $c$ amounts to transform the Lee form by $\tilde \theta = \theta + df$, so that the de Rham class $[\theta]$ is an invariant of the conformal class $c$.   

Of particular interest is the case of compact complex surfaces, where recent works~\cite{B,B1,B2,GO} showed that lcK metric exists for all known examples of compact complex surfaces with odd first Betti number,  with the exception of the complex surfaces obtained by blowing up points of certain Inoue surfaces with zero second Betti number,  described in \cite{B}.  However, a general existence result is still to come.

\vspace{0.5cm}
In this paper we study, on a compact complex surface  $S=(M,J)$ with odd first Betti number,  the problem of existence of {\it locally conformally symplectic forms} $\omega$ which tame $J$, i.e.   $2$-forms $\omega$ satisfying $d\omega = \theta \wedge \omega$ for a closed $1$-form $\theta$ (called Lee form of $\omega$),  and such that the $(1,1)$-part of $\omega$ is positive-definite. This  is, in general,  a weaker condition than the existence of lcK metrics, which turns out to be related to the theory of {\it bihermitian} conformal structures~\cite{AGG,Pontecorvo} in the case  when the $\omega$-conjugate  of $J$ determines another   integrable almost-complex structure $J^{\omega}$ on $M$, see \cite{ABD}. 

We  establish  the following general existence result,  which we believe is an important step towards the resolution of  the existence problems for both lcK and bihermitian conformal structures on a non-K\"ahler complex surface, and 
which  answers in positive (in the case of complex surfaces)  a question raised in \cite[Open Problem 1]{OV}.
\begin{thm}\label{thm:lcs}  Any compact complex surface $S=(M,J)$ with odd first Betti number admits a locally conformally symplectic form $\omega$ which tames $J$.
\end{thm}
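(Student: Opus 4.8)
The plan is to reduce the existence of a taming locally conformally symplectic (lcs) form to a problem about a suitable twisted cohomology, and then to solve that problem using the classification of compact complex surfaces with $b_1$ odd. First I would recall that a taming lcs form $\omega$ with Lee form $\theta$ is the same thing as a $d_\theta$-closed $2$-form (where $d_\theta := d - \theta\wedge$) whose $(1,1)$-part is positive; conformally rescaling $\omega$ changes $\theta$ within its de Rham class, so the relevant datum is a class $[\theta]\in H^1(M,\mathbb{R})$. Fixing $\theta$, the natural strategy is: start from an arbitrary positive $(1,1)$-form $F$ (a Hermitian metric), which always exists, and try to correct it by adding an exact-type term so that it becomes $d_\theta$-closed. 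Concretely, one looks for $\eta$ with $d_\theta(F+\eta) = 0$, i.e. $d_\theta \eta = -d_\theta F$; since the $(1,1)$-part of $F+\eta$ must stay positive, $\eta$ should be taken small, or at least with controlled $(1,1)$-part (for instance $\eta$ a combination of a $(2,0)+(0,2)$-form and a small $(1,1)$-correction). This is a linear problem in $\eta$ governed by the Morse–Novikov (Lichnerowicz) complex $(\Omega^\bullet, d_\theta)$, and the obstruction lives in $H^3_\theta(M)$ or, more precisely, in the image of $[d_\theta F]$ there.

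The core of the argument is therefore to show that for a judicious choice of the class $[\theta]$ — one expects $[\theta]$ to be a small negative multiple of (a de Rham representative of) the "non-Kähler" direction, e.g. related to the class detected by the Gauduchon metric or by $c_1^{BC}$ — the obstruction vanishes and moreover the correction $\eta$ can be chosen small in $C^0$. The small-$\eta$ requirement is exactly why the theorem asks only for a taming lcs form and not for an lcK metric: if one can solve $d_\theta\eta = -d_\theta F$ with $\|\eta\|_{C^0}$ as small as we like, then $F+\eta$ still has positive $(1,1)$-part and tames $J$. To get smallness one wants $d_\theta F$ itself to be small, which one arranges by first choosing $F$ adapted to $\theta$ (for instance the Gauduchon metric in the conformal class, for which $dd^c$-type quantities are as controlled as possible), and then inverting $d_\theta$ on the relevant space using Hodge theory for the elliptic operator $\Delta_\theta = d_\theta d_\theta^* + d_\theta^* d_\theta$ — this is presumably where the spectral theory of strongly elliptic operators acknowledged in the introduction enters, since one needs uniform control (a spectral gap, or control of the smallest eigenvalue) on the inverse of $d_\theta$ as $[\theta]\to 0$, where $d_\theta$ degenerates to $d$ and the cokernel jumps.

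The main obstacle, as I see it, is precisely this non-uniformity as $[\theta]\to 0$: the Morse–Novikov cohomology $H^\bullet_\theta(M)$ vanishes for generic $[\theta]\ne 0$ (on a compact manifold with, say, $H^1$ nonzero, $H^\bullet_{t\theta_0}$ typically vanishes for $t\ne 0$), but the norm of the Green operator blows up as $t\to 0$, so one must show that $d_\theta F$ decays fast enough to beat that blow-up. This forces a careful, case-by-case analysis along the Kodaira classification of compact complex surfaces with $b_1$ odd: surfaces of class VII (including Inoue surfaces, Hopf surfaces, Kato surfaces and their blow-ups), primary and secondary Kodaira surfaces, and non-Kähler properly elliptic surfaces. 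For each class one has an explicit enough description of the canonical bundle, the $b_1=1$ (or $3$) topology, and known Vaisman/lcK or lcs structures on many of them, which one can either import directly or perturb; the genuinely new input is handling the blow-ups of Inoue surfaces with $b_2=0$, where no lcK metric exists, by exploiting that the weaker lcs-taming condition is open and hence survives the blow-up and the perturbation even though the lcK condition does not. I would therefore structure the proof as: (1) linear algebra / deformation lemma reducing the problem to solving $d_\theta\eta=-d_\theta F$ with small $\eta$; (2) the Hodge-theoretic solvability statement for $d_\theta$ together with the estimate on the Green operator as $[\theta]\to 0$; and (3) the surface-by-surface verification that the required decay of $d_\theta F$ holds, using the classification.
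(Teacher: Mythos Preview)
Your proposal has the right general shape (reduce to a cohomological obstruction, then use classification), but it misses the key structural trick, and the ``smallness'' route you describe runs into exactly the obstacle you flag without a realistic way out.

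The main gap is this: you want to solve $d_\theta\eta=-d_\theta F$ with $\eta$ small enough that $F+\eta$ still has positive $(1,1)$-part, and you acknowledge that the Green operator for $d_\theta$ blows up as $[\theta]\to 0$, hoping that $d_\theta F$ decays fast enough to compensate. There is no reason to expect this balance to work, and the paper does not attempt it. Instead, the paper eliminates the smallness requirement entirely by arranging that the correction $\eta$ is purely of type $(2,0)+(0,2)$, so that $(F+\eta)^{1,1}=F>0$ automatically. This is possible only after one has first found a Hermitian metric $F$ and a closed $1$-form $\alpha$ satisfying the \emph{twisted pluriclosed} (or ``twisted Gauduchon'') condition $d_\alpha d_\alpha^c F=0$, equivalently $\bar\partial_\alpha\partial_\alpha F=0$. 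Once that holds, $\partial_\alpha F$ is $\bar\partial_\alpha$-closed, and provided the Dolbeault group $H^{2,1}_{\bar\partial_\alpha}(S)\cong H^1(S,\mathcal K_S\otimes\mathcal L_a^{*})$ vanishes, one writes $\partial_\alpha F=-\bar\partial_\alpha\beta$ for a $(2,0)$-form $\beta$ and sets $\omega=F+\beta+\bar\beta$. So the relevant obstruction is a \emph{Dolbeault} group (controlled via Riemann--Roch by $H^0(S,\mathcal L_a)$ and $H^2(S,\mathcal L_a)$), not the Morse--Novikov group $H^3_\theta(M)$ you name.

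The spectral theory enters at a different point than you guess: it is not used to estimate the Green operator of $\Delta_\theta$, but rather to solve the twisted Gauduchon problem. Writing $\alpha=a_h^g-d\log\psi$ with $g$ Gauduchon, the condition $d_\alpha d_\alpha^c F=0$ becomes a linear second-order equation $\mathbb L_{g,a}(\psi)=0$ for a positive function $\psi$, and one shows (via a Perron-type theorem for the principal eigenvalue of non-self-adjoint elliptic operators and its analytic dependence on parameters) that along the ray $a_t=\tfrac{t}{2}[\theta_h^g]$ the principal eigenvalue $\lambda(t)$ satisfies $\lambda(0)=0$, $\lambda'(0)>0$, and $\lambda(t)\le 0$ for $t\ge 2$, hence vanishes for some $t\in(0,2)$. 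This is the content of Theorem~\ref{thm:conformal}. The case-by-case analysis is then only needed to verify $H^2(S,\mathcal L_a)=\{0\}$ (or to quote existing lcK metrics for Hopf surfaces and surfaces of non-negative Kodaira dimension), which is much lighter than what your outline suggests.
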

The above theorem is derived from another existence result concerning a conformal class of Hermitian metrics on $S=(M,J)$, which can be regarded as a twisted version of  Gauduchon's celebrated theorem~\cite{gauduchon-0},  and can be stated as follows. Let $a \in H^1_{dR}(M)$ be a de Rham cohomology class and $\alpha \in a$ a closed $1$-form in $a$. Denote by $d_{\alpha}:=d-\alpha\wedge.$ the twisted differential operator  defining  the {\it Lichnerowicz--Novikov} complex,  and let $d_{\alpha}^c: = J d J^{-1}$. 
\begin{thm}\label{thm:conformal} Let $S=(M,J)$ be a compact complex surface with odd first Betti number,  and $c$ a conformal class of Hermitian metrics on $S$. Then, there exists a non-zero de Rham class $a \in H^1_{dR}(M)$ such that for any metric $g \in c$, there exists a representative $\alpha \in a$ such that the fundamental $2$-form $F$ of $g$ satisfies
$$d_{\alpha} d^{c}_{\alpha} F =0.$$
\end{thm}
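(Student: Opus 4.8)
The plan is to mimic Gauduchon's variational/PDE argument but in the twisted ($d_\alpha$) setting, where the extra subtlety is that the twisting $1$-form $\alpha$ itself is an unknown constrained only up to the choice of de Rham class $a$. First I would set up the relevant linear operator: on a fixed Hermitian surface $(M,J,g)$ with fundamental form $F$, the condition $d_\alpha d^c_\alpha F = 0$ should, after expanding $d_\alpha = d - \alpha\wedge$ and $d^c_\alpha = Jd_\alpha J^{-1}$, unwind into a second-order linear equation for the conformal factor together with lower-order terms involving $\alpha$; in complex dimension $2$ one has $d^c_\alpha F = *(\text{something})$ up to a function, so $d_\alpha d^c_\alpha F = 0$ is equivalent to a scalar equation $P_\alpha(f) = 0$ where $P_\alpha$ is a strongly elliptic (not necessarily self-adjoint, because $\alpha \ne 0$) second-order operator acting on functions, after writing the varying metric in $c$ as $e^f g$. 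The key point is that $P_\alpha$ is a zeroth-order perturbation of the $\bar\partial$-Laplacian, hence strongly elliptic with index $0$; by the Fredholm alternative it has a one-dimensional kernel and cokernel as soon as the constant-coefficient leading part forces $1 \in \ker$ only in the untwisted case.

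Second, I would promote $\alpha$ from a fixed form to a parameter. Within a fixed de Rham class $a$, write $\alpha = \alpha_0 + dh$; the operator $P_\alpha$ then depends on $(a, h)$, and the real question is to choose the class $a$ (and, for each metric $g\in c$, the exact part $dh$) so that the equation $P_\alpha(f)=0$ admits a positive solution $e^f$, i.e. so that the Hermitian metric $e^f g$ (which has fundamental form $F' = e^f F$, still positive) satisfies $d_{\alpha} d^c_{\alpha} F' = 0$. Equivalently, after absorbing the unknown conformal factor, one wants: for a suitable $a$, the operator $L_a := $ (twisted Laplacian-type operator built from the Chern connection and the class $a$) has a positive function in its kernel. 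By the Krein--Rutman theorem, a strongly elliptic operator of the form (Laplacian) $+$ (first order) $+$ (zeroth order) has a positive principal eigenfunction, with principal eigenvalue $\lambda(a)$ a real-analytic (or at least continuous) function of the class $a \in H^1_{dR}(M)$. The target is a class $a$ with $\lambda(a) = 0$.

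Third — and this is where the hypothesis $b_1(M)$ odd enters and is the crux — I would analyze the function $a \mapsto \lambda(a)$ on $H^1_{dR}(M)\cong\mathbb{R}^{b_1}$. At $a = 0$ the operator is (a conformal rescaling of) the Chern/Gauduchon Laplacian, whose principal eigenvalue is $0$ with eigenfunction the constants — so $\lambda(0)=0$, but that is the trivial solution we must avoid (the theorem demands $a\ne 0$). The plan is to show $\lambda$ is a concave (or convex) function of $a$ that is $\le 0$ with equality only on a subspace, combined with the fact that for $b_1$ odd the relevant subspace where $\lambda$ could stay $0$ — governed by harmonic $1$-forms and the non-Kähler identity — is a \emph{proper} subspace, so that $\lambda$ must attain $0$ at some nonzero $a$; more precisely, I expect to produce a line or a half-space through the origin on which $\lambda \equiv 0$ fails, forcing, by a degree/connectedness or intermediate-value argument on the sphere of directions, a nonzero zero of $\lambda$. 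The dependence on $g\in c$ is handled by noting that changing $g$ within $c$ only conjugates $L_a$ by a positive function and shifts $\alpha$ by an exact form, hence does not change $\lambda(a)$ or its zero set — this is exactly the conformal-invariance input from Theorem~\ref{thm:conformal}'s statement, and it is why the class $a$ can be chosen uniformly in $c$.

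The main obstacle I anticipate is the spectral-theoretic core: showing that the principal eigenvalue $\lambda(a)$ of the non-self-adjoint twisted operator genuinely vanishes for some $a\ne 0$, and that this uses the parity of $b_1$ in an essential way (one expects that for $b_1$ even — the Kähler case — the only zero is $a=0$). Proving this cleanly will require (i) a Krein--Rutman setup giving smooth dependence of $\lambda$ on $a$, (ii) an integral identity — a twisted analogue of the fact that a Gauduchon metric integrates the Chern Laplacian to zero — that pins down the sign of $\lambda(a)$ via pairing against harmonic forms and the Lee form of $g$, and (iii) a topological input distinguishing the odd-$b_1$ case, presumably via the structure of $H^1$ of non-Kähler surfaces (class VII, Kodaira, Inoue, etc.) or via the fact that for such surfaces $H^1_{dR}(M)$ contains a class not represented by a harmonic form for the Chern connection. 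Everything else — ellipticity, Fredholm alternative, the local expansion of $d_\alpha d^c_\alpha F$ — is routine once the right scalar reformulation is fixed.
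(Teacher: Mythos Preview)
Your overall architecture is essentially the paper's: reduce $d_\alpha d^c_\alpha F=0$ to a scalar equation, interpret it as asking that the principal eigenvalue $\lambda(a)$ of a non-self-adjoint strongly elliptic operator vanish, invoke Perron--Krein--Rutman theory for the existence and simplicity of $\lambda(a)$ and its analytic dependence on $a$, and use conformal invariance to make the answer depend only on the class $a$. All of that is correct and matches the paper.

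The genuine gap is in your step three. Your proposed mechanisms --- concavity/convexity of $\lambda$, a ``subspace where $\lambda$ stays $0$'' being proper because $b_1$ is odd, a degree argument on the sphere of directions, or input from the classification of non-K\"ahler surfaces --- are all off the mark. The paper's argument is far more concrete and does not use any of these. The hypothesis $b_1$ odd enters in exactly one place: it forces the harmonic part $\theta_h^g$ of the Lee form of any Gauduchon metric $g$ to be nonzero. One then restricts to the single ray $a_t=\tfrac{t}{2}[\theta_h^g]$, $t>0$, and the integral identity you anticipate in (ii) gives
\[
\lambda(t)\,\mathrm{vol}_g(M)\;=\;-\int_M \frac{\|du_t\|_g^2}{u_t^2}\,v_g\;+\;\frac{t(2-t)}{4}\int_M \|\theta_h^g\|_g^2\,v_g,
\]
where $u_t>0$ is the principal eigenfunction. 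For $t\ge 2$ both terms are $\le 0$, so $\lambda(t)\le 0$. At $t=0$ one has $\lambda(0)=0$ with $u_0$ constant; since the first integral is nonnegative and vanishes at $t=0$, its $t$-derivative vanishes there, and differentiating the identity yields $\lambda'(0)=\tfrac{1}{2\,\mathrm{vol}_g(M)}\int_M\|\theta_h^g\|_g^2>0$. Thus $\lambda(t)>0$ for small $t>0$ while $\lambda(t)\le 0$ at $t=2$, and the intermediate value theorem (using analyticity of $t\mapsto\lambda(t)$) produces a zero in $(0,2)$. No convexity, no topology beyond $\theta_h^g\ne 0$, no classification.
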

The de Rham class $a$ appearing in Theorem~\ref{thm:conformal}  determines, via the exponential map 
\begin{equation}\label{H1}
H^1_{dR}(S,\bb C)\stackrel{\exp}{\hookrightarrow}H^1(S,\bb C^*) \longrightarrow {\rm Pic}_0(S), 
\end{equation}
a flat holomorphic line bundle $\mathcal L_a$, and we derive Theorem~\ref{thm:lcs} from Theorem~\ref{thm:conformal} by showing that,  when $H^2(S, \mathcal L_{a})=\{0\}$, $S$ also admits  a locally conformally symplectic form  with a Lee form $\alpha$. The proofs of Theorems~\ref{thm:lcs} and Theorem~\ref{thm:conformal} are presented in Section~\ref{s:lichnerowicz-novikov} of the paper, whereas the necessary analytical tools are collected in the Appendix~\ref{a:PDE}.

\vspace{0.5cm}
In the light of the ``tamed to compatible'' conjecture  mentioned above, it is natural to compare the existence on $S$ of lcK metrics and of locally conformally symplectic forms taming $J$,  through the corresponding de Rham classes of their Lee forms. We thus introduce in Section~\ref{s:cohomological} the subset $\mathcal{C}(S)$ (resp. $\mathcal{T}(S)$) in  $H^1_{dR}(M)$ of  classes $a$ for which there exists a lcK metric on $S$ with Lee form $\theta \in a$ (resp. for which there exists a locally conformally symplectic form which tames $J$, with Lee form in $a$). We obviously have the inclusion $\mathcal{C}(S) \subseteq \mathcal{T}(S)$, and one may ask (see also~\cite{OV}, \cite[Rem.~9]{B1}):

\begin{problem}\label{problem:secondary} Let $S=(M,J)$ be a compact complex surface with odd first Betti number.  Determine the set $\mathcal{T}(S)\subset H^1_{dR}(M)$ of classes $a$ for which there exists a  locally conformally symplectic form $\omega$ which tames $J$ and has a Lee form $\theta \in a$. Is $\mathcal{T}(S)$ strictly bigger than $\mathcal{C}(S)$?
\end{problem}
Our initial motivation to study  the above problem came from the theory of bihermitian conformal structures  developed  in \cite{ABD}, where the existence of the latter was reduced to answering the question of whether certain  classes $a \in H^1_{dR}(M)$ belong to $\mathcal{T}(S)$ and $\mathcal{C}(S)$.

A number of partial results concerning  Problem~\ref{problem:secondary} are obtained in Section~\ref{s:surfaces}, where we specialise to  the case of a compact complex surface with first Betti number equal to $1$. 
 
In the last Section~\ref{s:examples} of the paper,  we consider  some examples of non-K\"ahler complex surfaces in the Kodaira class VII (i.e. satisfying $H^0(S, \mathcal{K}_S^{\ell})=\{0\}$ for all $\ell \ge 1$, where $\mathcal{K}_S$ stands for the canonical line bundle of $S$,  see \cite{bpv}),  for which  a complete answer to the above problem can be given. It is known that in this case,  the first Betti number equals to $1$ (see e.g. \cite{bpv}),  and that the degree with respect to some Gauduchon metric on $S$ of the holomorphic line bundles determined via \eqref{H1} induces an orientation on $H^1_{dR}(S) \cong (\mathbb{R}, >)$, which turns out to be independent of  the choice of a Gauduchon metric (see \cite[Rem.~2.4]{Teleman} or Lemma~\ref{l:sign}).  Thus, for any compact complex surface in the Kodaira class VII, one can naturally identify $H^1_{dR}(S)$ with the oriented real line $(-\infty, + \infty)$. In this notation,  a combination of Propositions~\ref{p:hopf} and \ref{p:inoue} gives
 \begin{thm}\label{thm:BH} Let  $S$ be a compact complex surface whith a minimal model $S_0$. \begin{enumerate}
 \item[(i)] If $S_0$ is  a Hopf surface, then $\mathcal{T}(S)=\mathcal{C}(S)=(-\infty, 0)$.  
 \item[(ii)] If $S$ is an Inoue surface of the type $S^+_{N,p,q,r; u}$ with $u\in \mathbb{C}\setminus \mathbb{R}$, then $\mathcal{C}(S) =\emptyset$ and   $\mathcal{T}(S)= \{a_0\}.$
 \item[(iii)] If $S$ is an Inoue surface of the type $S^+_{N,p,q,r; u}$ with $u\in  \mathbb{R}$, then $\mathcal{C}(S) =\mathcal{T}(S)= \{a_0\},$
 \end{enumerate}
 where $a_0\in H^1_{dR}(S)$ denotes the de Rham class for which the holomorphic line bundle determined by \eqref{H1} is isomorphic to the anti-canonical line bundle $\mathcal{K}^*_S$. 
 \end{thm}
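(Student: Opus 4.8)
The plan is to prove Theorem~\ref{thm:BH} by treating the three cases separately, in each case combining the already-announced structural input (Propositions~\ref{p:hopf} and \ref{p:inoue}) with the general existence result, Theorem~\ref{thm:lcs}, and the sign/orientation normalization of $H^1_{dR}(S)$ recalled in the paragraph before the statement. Since the excerpt explicitly says the theorem is \emph{``a combination of Propositions~\ref{p:hopf} and \ref{p:inoue}''}, the bulk of the work is deferred to those propositions; what remains here is to (a) reduce the blown-up surface $S$ to its minimal model $S_0$, and (b) match the normalizations so that the sets $\mathcal{C}(S)$, $\mathcal{T}(S)$ come out as the stated subsets of the oriented line $(-\infty,+\infty)$.

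\medskip
First I would record the behaviour of $\mathcal{C}$ and $\mathcal{T}$ under blow-up. A blow-up $\pi\colon S\to S_0$ at a point induces an isomorphism $\pi^*\colon H^1_{dR}(S_0)\to H^1_{dR}(S)$ which is orientation-preserving for the Gauduchon-degree orientation (this is essentially the content of \cite[Rem.~2.4]{Teleman} / Lemma~\ref{l:sign}, since degrees of flat line bundles are computed against a Gauduchon metric and pull back compatibly). One direction, $\mathcal{T}(S_0)\subseteq\mathcal{T}(S)$ and $\mathcal{C}(S_0)\subseteq\mathcal{C}(S)$, is not automatic, but in our situation we only need the \emph{opposite} inclusion together with the fact that the allowed class is forced: a locally conformally symplectic form taming $J$ on $S$ (resp.\ an lcK metric) restricts, after a conformal change, to one on the exceptional-curve complement, and by a standard positivity/volume argument the Lee class must be the pullback of a class on $S_0$ that lies in $\mathcal{T}(S_0)$ (resp.\ $\mathcal{C}(S_0)$). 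Hence it suffices to prove the three assertions with $S$ replaced by its minimal model $S_0$, which is a Hopf surface in case (i) and already minimal (an Inoue surface $S^+_{N,p,q,r;u}$) in cases (ii), (iii).

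\medskip
For case (i), Proposition~\ref{p:hopf} should give that on a Hopf surface the anti-canonical line bundle $\mathcal{K}^*_{S_0}$ corresponds, via \eqref{H1}, to the class $a_0$ which under the orientation identification is the point $0\in(-\infty,+\infty)$; that $\mathcal{T}(S_0)$ and $\mathcal{C}(S_0)$ both equal the negative half-line $(-\infty,0)$ follows from the explicit Boothby-type lcK metrics (whose Lee classes sweep out $(-\infty,0)$) together with a non-existence statement at $a\ge 0$ coming from the positivity of $\deg_g\mathcal{L}_a$. For cases (ii) and (iii), Proposition~\ref{p:inoue} provides the rigidity: on an Inoue surface $S^+_{N,p,q,r;u}$ the only de Rham class that can carry a taming locally conformally symplectic form is $a_0$ (the anti-canonical class), so $\mathcal{T}(S)\subseteq\{a_0\}$; that $a_0\in\mathcal{T}(S)$ is exactly Theorem~\ref{thm:lcs} applied to $S$ once one checks the resulting Lee class is $a_0$ (this uses $H^2(S,\mathcal{L}_{a})=0$ for the relevant $a$, as in the derivation of Theorem~\ref{thm:lcs} from Theorem~\ref{thm:conformal}). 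Finally the dichotomy on $\mathcal{C}(S)$ between $u\notin\mathbb{R}$ and $u\in\mathbb{R}$ is the known (non-)existence of lcK metrics on parabolic Inoue surfaces, also part of Proposition~\ref{p:inoue}.

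\medskip
\textbf{The main obstacle} I anticipate is not in any single case but in the blow-up reduction: one must be sure that \emph{both} $\mathcal{T}$ and $\mathcal{C}$ are genuinely unchanged (not merely bounded on one side) when passing to the minimal model, i.e.\ that no \emph{extra} Lee classes appear on $S$ that were absent on $S_0$, and that $a_0$ for $S$ really is the pullback of $a_0$ for $S_0$ (which is clear since $\mathcal{K}_S=\pi^*\mathcal{K}_{S_0}\otimes\mathcal{O}(E)$ and $\mathcal{O}(E)$ is trivial in $\mathrm{Pic}_0(S)$). The cleanest route is to invoke the rigidity Propositions~\ref{p:hopf}, \ref{p:inoue} directly on $S_0$ for the upper bound on $\mathcal{T}$, and to note that a taming locally conformally symplectic form on $S$ with Lee class $a$ yields one on $S_0\setminus\{\text{blown-up points}\}$, whence by the Inoue/Hopf rigidity $a=\pi^*a_0$; combined with the existence half from Theorem~\ref{thm:lcs} this pins down $\mathcal{T}(S)=\pi^*\mathcal{T}(S_0)$ exactly, and similarly for $\mathcal{C}$.
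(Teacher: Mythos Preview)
Your recognition that the theorem is essentially a packaging of Propositions~\ref{p:hopf} and \ref{p:inoue} is correct, and that is exactly how the paper proceeds: there is no separate proof of Theorem~\ref{thm:BH} beyond the sentence introducing it. However, several points in your reduction are either unnecessary or off.

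First, in cases (ii) and (iii) the hypothesis is that $S$ \emph{is} an Inoue surface $S^+_{N,p,q,r;u}$, hence already minimal; there is no blow-up reduction to perform there, and your discussion of descending structures from $S$ to $S_0$ is irrelevant for those cases. The blow-up issue arises only in case (i), and there Proposition~\ref{p:hopf} is already stated for an arbitrary surface whose minimal model is Hopf, so the reduction is absorbed into that proposition.

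Second, your normalization claim for (i) is wrong: $a_0$ does not appear in case (i) at all, and there is no reason the anticanonical class of a Hopf surface should sit at $0$ in the oriented identification $H^1_{dR}(S)\cong(\mathbb{R},>)$. Indeed, by \eqref{e:degree} and the fact that $\deg_g\mathcal{K}_S>0$ on class~VII surfaces (see the proof of Theorem~\ref{thm:lcs}), one has $a_0<0$. The point $0$ in $(-\infty,0)$ corresponds to the trivial class, not the anticanonical one.

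Third, your proposed direction for the blow-up argument (restrict to the complement of the exceptional curve and push down to $S_0$) is the hard direction and does not obviously close up: a taming lcs form on $S_0\setminus\{x\}$ need not extend across $x$. The paper goes the other way via Proposition~\ref{p:blow-up}: one pulls back from $S_0$ to $S$ to obtain $(-\infty,0)=\mathcal{T}(S_0)\subseteq\mathcal{T}(S)$ and $\mathcal{C}(S_0)=\mathcal{C}(S)$, and then uses the general constraint $\mathcal{T}(S)\subseteq(-\infty,0)$ from Proposition~\ref{p:trivial} to close the inclusion for $\mathcal{T}$.

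Finally, for the Inoue cases the paper does not rely on Theorem~\ref{thm:lcs} to place $a_0$ in $\mathcal{T}(S)$ with the correct class; rather, the averaging argument over the solvable Lie group (Belgun's method) simultaneously forces any taming lcs Lee class to equal $a_0$ \emph{and}, combined with Tricerri's lcK metric for $u\in\mathbb{R}$ (giving $a_0\in\mathcal{C}(S)\subseteq\mathcal{T}(S)$), establishes existence. Your route via Theorem~\ref{thm:lcs} would require an extra identification of the class produced there with $a_0$, which is not immediate.
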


\section{Existence of locally conformally symplectic forms taming the complex structure}\label{s:lichnerowicz-novikov}
Throughout the paper, we shall use the following
\begin{conv}\label{convention}
{\rm Let $\alpha$ be a closed $1$-form  on $M$, representing  a de Rham class $a=[\alpha]$. We denote by $L_{\alpha}$ the trivial real line bundle 
over $M$,  endowed with a (non-trivial) flat connection $\nabla^{\alpha} s := d s + \alpha\otimes s$,  where $s$ is a smooth section of $L$. Similarly, $\nabla^{\alpha}$ induces a holomorphic structure on the complex bundle  $\mathcal{L}_{\a} := L_{\alpha} \otimes \mathbb{C}$. Writing $\a_{\mid U_i} = df_i$ on an open covering $\mathfrak{U}= (U_i)$ of $M$,  $\{(U_i, e^{-f_i})\}$ defines a parallel (respectively holomorphic) trivialization of $L_{\alpha}$ (resp. $\cal L_{\a}$) with transition functions $e^{f_i-f_j}$ on $U_i \cap U_j$. (With respect to this trivialization, $s_0=(U_i, e^{f_i})$ is a nowhere vanishing smooth section of $L_{\alpha}$.) In terms of \eqref{H1}, $L_{\alpha}$ (resp. $\cal L_{\a}$) is isomorphic to the flat real line bundle $L_{a}$ (resp. the flat holomorphic line bundle $\cal L_{a}$) determined by the deRham class $a=[\alpha]$.  In what follows, we shall tacitly identify $L_{\a}$ and $\mathcal L_{\a}$ with the bundles $L_{a}$ and $\cal L_{a}$, respectively,  and simply refere to them as the  flat bundles {\it corresponding} to $a\in H^1_{dR}(M)$. }
\end{conv}
Let $L=L_{a}$ be the flat real line bundle over $M$ determined by $\a \in a, \ a\in H^1_{dR}(M)$ via  the Convention~\ref{convention},  and denote by $L^*= L_{-a}$ its dual. 
The differential operator  $d_{\alpha}=d-\alpha\wedge.$ defines the Lichnerowicz--Novikov complex
\begin{equation}\label{lichne-novikov}
\cdots \stackrel{d_{\alpha}}{\to}\Omega^{k-1}(M) \stackrel{d_{\alpha}}{\to} \Omega^{k}(M)  \stackrel{d_{\alpha}}{\to}\cdots 
\end{equation}
which is  isomorphic to the de Rham complex of differential forms with values in $L^*$
\begin{equation}\label{twisted-deRham}
\cdots \stackrel{d_{L^*}}{\to}\Omega^{k-1}(M,L^*) \stackrel{d_{L^*}}{\to} \Omega^{k}(M, L^*)  \stackrel{d_{L^*}}{\to}\cdots.
\end{equation}
In particular,  we have an isomorphism between the cohomology groups
$$H^{k}_\a(M)\simeq H^k_{d_{L^*}}(M,L^*).$$
Considering the Dolbeault cohomology groups of $S$ with values in the flat holomorphic line bundle ${\mathcal L}^*$, we have 
$$d_{L^*}=\part _{\mathcal {L^*}}+\bpart_{\mathcal {L}^*}, \quad {\rm and}\quad d_{\alpha}=\part_{\alpha}+\bpart_{\alpha}$$
 with 
 $$\part_{\alpha}=\part-\alpha^{1,0}\w\quad {\rm and}\quad \bpart_{\alpha}=\bpart-\alpha^{0,1}\w,$$
giving rise to the isomorphisms 
\begin{equation}\label{dolbeault}
H^{p,q}_{\bar \partial_{\alpha}}(S)\simeq H_{\bar\partial_{\cal L^*}}^{p,q}(S,\cal L^*).
\end{equation}

\begin{defn} Let $X=(M,J)$ be a complex manifold. We shall say that a differentiable 2-form $\o$ is a  locally conformally  symplectic form taming $J$ if there exists  a closed differentiable 1-form $\alpha$ such that  $d_{\alpha}\o=0$, and the $(1,1)$-part $\o^{1,1}$ of $\omega$ is positive definite. If, furthermore, $\o$ is of type $(1,1)$, it defines a locally conformally K\"ahler structure on $X$. The $1$-form $\alpha$ is called  the Lee form $\alpha$ of $\omega$.
\end{defn}
\begin{rem}\label{r:conformal}
In terms of the isomorphism between \eqref{lichne-novikov} and \eqref{twisted-deRham}, if we write $\alpha_{\mid U_i}=df_i$ on an open covering  $\mathfrak {U}=(U_i)$ of $M$, then ${\omega_i}_{\mid U_i} := e^{-f_i}\omega$ defines a $d_{L^*}$-closed $2$-form with values in $L^*$,  whose $(1,1)$-part is positive definite (for the latter we use the fact $L^*= L_{-a}$ is defined by the co-cycle  $(e^{f_j-f_i}, U_{ij} = U_i\cap U_j)$  which consist of positive constant real functions). Similarly, if $\omega$ is a  locally conformally  symplectic form taming $J$ with $d_{\alpha}\omega=0$, then $\tilde \omega = e^{f} \omega$ is a locally conformally symplectic  form taming $J$ which satisfies $d_{\tilde \alpha} \tilde \omega =0$ with $\tilde \alpha = \alpha + df$. It follows that the existence of a locally conformally symplectic form taming $J$  with Lee form $\alpha$ merely depend upon the de Rham class $a=[\alpha]$.
\end{rem}
Our first observation is the following
\begin{lem}\label{from symplectic to SKT} Let $\omega$ be a  locally conformally  symplectic form on $X=(M,J)$ with  Lee form $\alpha$. Denote by $F:= \omega^{1,1}$ the $(1,1)$-part of $\omega$. Then,
$$d_{\alpha} d^c_{\alpha} F =0,$$
where $d^c_{\alpha}= J d_{\alpha} J^{-1}=i(\bar\partial_{\alpha} - \partial_{\alpha})$.
\end{lem}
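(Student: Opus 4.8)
The plan is to decompose all the forms into their $(p,q)$-components and to exploit the fact that on a complex surface every form of type $(p,q)$ with $p\ge 3$ or $q\ge 3$ vanishes. Since $\omega$ is real, setting $\phi:=\omega^{2,0}$ we have $\omega=\phi+F+\bar\phi$ with $F=\bar F$ of type $(1,1)$.

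First I would expand the hypothesis $d_{\alpha}\omega=0$ according to bidegree. Using $d_{\alpha}=\part_{\alpha}+\bpart_{\alpha}$, together with $\part_{\alpha}\phi\in\Lambda^{3,0}=0$ and $\bpart_{\alpha}\bar\phi\in\Lambda^{0,3}=0$, the $(2,1)$- and $(1,2)$-parts of $d_{\alpha}\omega$ produce the two (mutually complex-conjugate) identities
$$\bpart_{\alpha}\phi+\part_{\alpha}F=0,\qquad \part_{\alpha}\bar\phi+\bpart_{\alpha}F=0.$$
Substituting $\part_{\alpha}F=-\bpart_{\alpha}\phi$ and $\bpart_{\alpha}F=-\part_{\alpha}\bar\phi$ into $d^c_{\alpha}F=i(\bpart_{\alpha}F-\part_{\alpha}F)$ gives $d^c_{\alpha}F=i(\bpart_{\alpha}\phi-\part_{\alpha}\bar\phi)$.

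Next I would apply $d_{\alpha}=\part_{\alpha}+\bpart_{\alpha}$ once more and expand:
$$d_{\alpha}d^c_{\alpha}F=i\bigl(\part_{\alpha}\bpart_{\alpha}\phi+\bpart_{\alpha}^{2}\phi-\part_{\alpha}^{2}\bar\phi-\bpart_{\alpha}\part_{\alpha}\bar\phi\bigr).$$
Here $\part_{\alpha}\bpart_{\alpha}\phi\in\Lambda^{3,1}=0$ and $\bpart_{\alpha}\part_{\alpha}\bar\phi\in\Lambda^{1,3}=0$ for dimensional reasons, while $\part_{\alpha}^{2}=\bpart_{\alpha}^{2}=0$ because $\alpha$ is closed (equivalently $\part\alpha^{1,0}=0$ and $\bpart\alpha^{0,1}=0$). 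Hence all four terms vanish and $d_{\alpha}d^c_{\alpha}F=0$.

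This is really a bidegree-bookkeeping computation rather than a statement with a genuine obstacle; the two ingredients that make it work, and which one should not overlook, are the vanishing of $(p,q)$-forms with an index $\ge 3$ on a surface (which kills the $(3,0)$, $(0,3)$, $(3,1)$ and $(1,3)$ contributions) and the closedness of the Lee form $\alpha$, which is precisely what makes $\part_{\alpha}$ and $\bpart_{\alpha}$ square to zero.
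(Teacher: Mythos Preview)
Your proof is correct and follows essentially the same bidegree decomposition as the paper: both extract the $(2,1)$ and $(1,2)$ components of $d_\alpha\omega=0$ to obtain $\partial_\alpha F=-\bar\partial_\alpha\omega^{2,0}$ and its conjugate. The only difference is in the final step: the paper simply applies $\bar\partial_\alpha$ to $\partial_\alpha F=-\bar\partial_\alpha\omega^{2,0}$ and uses $\bar\partial_\alpha^2=0$ to conclude $d_\alpha d^c_\alpha F=2i\,\partial_\alpha\bar\partial_\alpha F=0$, which works on a complex manifold of \emph{any} dimension, whereas your route invokes the vanishing of $\Lambda^{3,1}$ and $\Lambda^{1,3}$ and so needs the surface hypothesis (which is all the paper ever uses, so no harm done).
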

\begin{proof} Writing 
$$\o=F +\o^{2,0}+\o^{0,2}$$
where $\o^{2,0}$ and $\o^{0,2}$ denote the  $(2,0)$ and $(0,2)$-part of $\omega$,  respectively. As $d_{\alpha} = \partial_{\alpha} + \bar\partial_{\alpha}$,  one  has
$$d_\alpha \o=0 \Longleftrightarrow \Big\{ \begin{array}{lcl} \part_{\alpha} F +\bpart_{\alpha} \o^{2,0} &=& 0 \\ 
\bpart_{\alpha} F+\part_{\alpha}\o^{0,2} &=&0, \end{array} $$
hence $2i\part_{\alpha}\bpart_{\alpha} F =d_{\alpha}d^c_{\alpha} F=0$.
\end{proof}
\begin{lem}\label{l:twisted-gauduchon} Let $S=(M,J)$ be a complex surface and $g$ a Hermitian metric with fundamental $2$-form $F(\cdot, \cdot)=g(J\cdot, \cdot)$.  Then,
$$d_{\alpha} d^c_{\alpha} F=0 \quad \Longleftrightarrow \quad \d(\t-\alpha)+g(\t-\alpha,\alpha)=0,$$
where $\delta$ is the co-differential with respect to $g$ and $\theta = J \delta F$ is the Lee form of $g$.
\end{lem}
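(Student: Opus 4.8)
The plan is to compute $d_\alpha d^c_\alpha F$ directly on a complex surface, where $F$ is a $(1,1)$-form and hence $d^c_\alpha F$ and $d_\alpha d^c_\alpha F$ are a $3$-form and a $4$-form respectively, so everything is governed by the Hodge star and the co-differential. The cleanest route is to pass to the twisted-de-Rham picture from Remark~\ref{r:conformal}: since $dF = \theta \wedge F$ with $\theta = J\delta F$ the usual Lee form of $g$, we have $d_\alpha F = dF - \alpha\wedge F = (\theta-\alpha)\wedge F$, so $d_\alpha F$ is controlled by the single $1$-form $\psi := \theta - \alpha$. On a surface, $\psi \wedge F = \ast(J\psi)$ (up to a sign/normalization depending on conventions, because contracting $F$ twice with the metric recovers $\psi$), so $d_\alpha F = \ast(J\psi)$ and applying $d_\alpha^c = i(\bar\partial_\alpha - \partial_\alpha)$ amounts to applying $J d J^{-1}$ twisted by $\alpha$ to an explicitly described $3$-form.

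Concretely, I would proceed as follows. First, record the surface identity expressing a $3$-form as $\ast$ of a $1$-form and a $(1,1)$-form wedge a $1$-form in terms of $\ast$ composed with $J$; this lets me rewrite $d_\alpha d^c_\alpha F$ as (a constant times) $\ast$ applied to a scalar function, and $\ast$ of a $4$-form versus a $0$-form is an isomorphism, so the PDE $d_\alpha d^c_\alpha F = 0$ becomes a single scalar equation. Second, I would unwind $d^c_\alpha = J d_\alpha J^{-1}$ acting on the $3$-form $d_\alpha F = \psi\wedge F$: because $J$ acts on $3$-forms on a $4$-manifold essentially by $-\ast J \ast$-type formulas, $d^c_\alpha(\psi\wedge F)$ reduces to $d_\alpha$ of the co-closed/exact parts of $\psi$. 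Third, expand $d_\alpha = d - \alpha\wedge$ and collect: the term $d\ast(\text{something built from }\psi)$ produces $\delta\psi = \delta(\theta-\alpha)$ after applying $\ast$ again, while the wedge-with-$\alpha$ correction and the $J$-twisting produce the inner-product term $g(\theta-\alpha,\alpha)$. Matching the two sides and checking that no other terms survive on a complex \emph{surface} (this is where $\dim_{\mathbb C} = 2$ is essential, e.g. $F\wedge F$ is a volume form and $\psi\wedge\psi\wedge F$-type terms either vanish or combine) gives the stated equivalence.

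The main obstacle I anticipate is bookkeeping of constants and signs in the identities relating $\ast$, $J$, $d^c$ and the contraction of $F$ — in particular getting the right normalization in $\psi\wedge F = \ast(J\psi)$ and in $d^c_\alpha = i(\bar\partial_\alpha - \partial_\alpha) = Jd_\alpha J^{-1}$, and making sure the twisting by $\alpha^{1,0}$ and $\alpha^{0,1}$ reassembles correctly into $-\alpha\wedge$ plus a zeroth-order piece. A safe way to avoid sign disasters is to do the computation pointwise in a $g$-unitary coframe adapted so that, at the point in question, $F = \tfrac{i}{2}\sum dz^j\wedge d\bar z^j$ and $\theta - \alpha$ has a prescribed form; then $\partial_\alpha$, $\bar\partial_\alpha$ act explicitly and the fourth-order-looking expression collapses to the scalar $\delta(\theta-\alpha) + g(\theta-\alpha,\alpha)$ times the volume form. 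The rest — reading off the ``$\Leftrightarrow$'' from the fact that on a compact surface $\ast$ identifies $4$-forms with functions — is then formal, and I do not expect it to require anything beyond Lemma~\ref{from symplectic to SKT} and the standard Hodge-theoretic identities on a complex surface.
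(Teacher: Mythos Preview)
Your plan is essentially the paper's approach: both exploit that on a surface $*F=F$, so a $4$-form of the shape $\Psi\wedge F$ vanishes iff $g(\Psi,F)=0$, which is just your Hodge-star reduction $\psi\wedge F=\pm *(J\psi)$ phrased differently. The paper's execution is more direct than you anticipate: since $F$ is $J$-invariant one has $d^c_\alpha F = J(\theta-\alpha)\wedge F$ immediately, whence $d_\alpha d^c_\alpha F = \big[d\big(J(\theta-\alpha)\big) + (\theta-\alpha)\wedge J(\theta-\alpha)\big]\wedge F$, and the scalar $g(\cdot,F)$ is then computed in a $J$-adapted orthonormal frame via the Levi-Civita connection together with the identity $\theta(X)=-\sum_i g\big(J(D_{e_i}J)e_i,X\big)$ --- no unitary-coframe bookkeeping or sign-chasing through $*$ is needed.
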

\begin{proof} Using $dF = \theta \wedge F$ (which, as $F$ is a self-dual $2$-form with respect to $g$ is equivalent to the relation $\theta = J \delta F$), one gets
$$d_{\alpha}d_{\alpha}^cF=\Bigl[d\bigl(J(\t-\alpha)\bigr)+(\t-\alpha)\w J(\t-\alpha)\Bigr]\w F.$$
As $F$ is self-dual, i.e. $* F = F$ where $*$ stands for the Hodge operator with respect to $g$, $d_{\alpha}d_{\alpha}^cF=0$   is equivalent to
\begin{equation*}
\begin{split}
0 &=g\Bigl(d\bigl(J(\t-\alpha)\bigr)+(\t-\alpha)\w J(\t-\alpha), F\Bigr)\\
   & = \sum_{i=1}^4 \Big(D_{e_i} \big(J(\t-\alpha)\big)(Je_i)\Big) +|\t -\alpha|^2_g \\
   &= \sum_{i=1}^4 \Big((D_{e_i}J)(\theta - \a) (Je_i) + \big(D_{e_i}(\theta-\a)\big)(e_i)\Big) + |\t -\alpha|^2_g \\
   &= (\theta-\a)\Big(J \sum_{i=1}^4 (D_{e_i} J)(e_i)\Big) -\d(\t-\alpha)+ |\t-\alpha|^2_g \\
    &= - g(\theta, \t-\alpha) -\d(\t-\alpha)+  g(\t-\alpha,\t-\alpha)\\
    & =-\d(\t-\alpha) -g(\t-\alpha,\alpha),
\end{split}
\end{equation*}   
where $D$ is the riemannian connection of $g$,  $\{e_i, \ i=1, \cdots, 4\}$ is any $J$-adapted orthonormal frame, and for passing from the 3th line to the 4th we have used the fact that $D_{e_i}J$ is skew-symmetric with respect to $g$ and 
anti-commutes with $J$ whereas for passing from the  4th line to the 5th  we have used  the identity $\theta(X)= (\delta F)(-JX)= -\sum_{i=1}^{4}g(J(D_{e_i}J)(e_i),X)$.                                                                                                       
\end{proof}
\begin{rem}\label{r:twisted-gauduchon} In the light of Remark~\ref{r:conformal}, it is easily seen that the condition
\begin{equation}\label{twisted-gauduchon}
\d(\t-\a)+g(\t-\a ,\a)=0
\end{equation}
is also conformally invariant. More precisely, it is straightforward to check that  if $\tilde g=e^fg$ and $\tilde \a=\a+df$, then \eqref{twisted-gauduchon} is satisfied for $(g,\a)$ if and only if it is satisfied for $(\tilde g,\tilde \a)$.
\end{rem}

Recall the fundamental result of Gauduchon~\cite{gauduchon-0} which affirms that if $X=(M,J)$ is an $m$-dimensional compact complex manifold endowed with a Hermitian metric $g$, then there exists (a unique up to scale) conformal metric $\tilde g = e^f g$ whose fundamental form $\tilde F$ satisfies $dd^c \tilde F^{m-1}=0$, or equivalently, for which $\tilde\delta J \tilde \delta \tilde F =0$. Such Hermitian metric is referred to as {\it Gauduchon metric}. By Lemma~\ref{l:twisted-gauduchon} and Remark~\ref{r:twisted-gauduchon}, we then obtain

\begin{prop}\label{p:twisted-gauduchon} Let $S=(M,J)$ be a compact complex surface,  $a\in H^1_{dR}(M)$ a de Rham class and $c=[g]$ a conformal class of Hermitian metrics on $M$, with $g$ a Gauduchon metric in $c$. Then the following conditions are equivalent:
\begin{enumerate}
\item[\rm(i)] for any $\tilde g \in c$, the fundamental $2$-form $\tilde F$ satisfies $d_{\tilde \a} d^c_{\tilde \a} \tilde F=0$ for some $\tilde \alpha \in a$;
\item[\rm(ii)] there exists a positive smooth function $\psi$ on $M$ which satisfies the equation
\begin{equation}\label{L}
\mathbb{L}_{g,a} (\psi)= \Delta_g (\psi) - g(\theta^g - 2a_h^g, d\psi) + g(\theta^g-a_h^g, a_h^g) \psi =0,
\end{equation}
where  $\Delta_g$ is the Riemannian Laplace operator of $g$, $\theta^g$ is the corresponding co-closed Lee form,  and $a_h^g$ is the harmonic representative for $a$ with respect to $g$.
\end{enumerate}
\end{prop}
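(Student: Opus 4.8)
The plan is to fix a Gauduchon metric $g$ in the conformal class $c$ and translate the conformally invariant condition (i) into a concrete scalar PDE for the conformal factor, which will then be precisely equation \eqref{L}. First I would observe that, by Remark~\ref{r:twisted-gauduchon}, the equation $d_{\tilde\alpha}d^c_{\tilde\alpha}\tilde F=0$ in (i) is invariant under simultaneously changing $\tilde g = e^f g$ and $\tilde\alpha = \alpha + df$; hence (i) holds for some pair in the conformal class if and only if it holds for the specific base metric $g$ together with \emph{some} representative $\alpha\in a$. So it suffices to analyze, with $g$ fixed, the equation $\delta(\theta^g-\alpha) + g(\theta^g-\alpha,\alpha) = 0$ from Lemma~\ref{l:twisted-gauduchon}, where $\alpha$ ranges over all closed $1$-forms in $a$.

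Next I would parametrize the closed representatives of $a$ using Hodge theory for $g$: every $\alpha\in a$ can be written uniquely as $\alpha = a_h^g + dh$ for a smooth function $h$ (unique up to an additive constant), where $a_h^g$ is the $g$-harmonic representative. Substituting this into \eqref{twisted-gauduchon} and writing $\theta^g$ for the (co-closed) Lee form of the Gauduchon metric $g$, so that $\delta\theta^g = 0$, one gets
\begin{equation*}
\delta(\theta^g - a_h^g - dh) + g(\theta^g - a_h^g - dh,\, a_h^g + dh) = 0.
\end{equation*}
Using $\delta\theta^g=0$, $\delta a_h^g = 0$, and $\delta(dh) = -\Delta_g h$, the linear-in-$h$ terms collect to $\Delta_g h - g(\theta^g - 2a_h^g, dh)$, and the purely zeroth-order term is $g(\theta^g - a_h^g, a_h^g)$, while the quadratic term $-g(dh,dh) = -|dh|_g^2$ appears as well. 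So the equation becomes $\Delta_g h - g(\theta^g - 2a_h^g, dh) - |dh|_g^2 + g(\theta^g - a_h^g, a_h^g) = 0$. This is a nonlinear (Riccati-type) equation in $h$, and the standard device is the exponential substitution $\psi = e^{-h}$ (equivalently $h = -\log\psi$), under which $dh = -d\psi/\psi$ and $\Delta_g h = -\Delta_g\psi/\psi - |d\psi|_g^2/\psi^2$ (with the analyst's sign convention for $\Delta_g$ that makes it the honest Laplace–Beltrami operator times $-1$, matching the $\delta d$ convention used in the paper). The $|dh|_g^2 = |d\psi|_g^2/\psi^2$ terms cancel exactly, and after multiplying through by $-\psi$ one lands on $\Delta_g\psi - g(\theta^g - 2a_h^g, d\psi) + g(\theta^g - a_h^g, a_h^g)\psi = 0$, which is $\mathbb{L}_{g,a}(\psi) = 0$. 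Finally, $h$ ranges over all smooth functions precisely when $\psi$ ranges over all \emph{positive} smooth functions, which gives the equivalence (i) $\Leftrightarrow$ (ii).

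The one genuinely delicate point is bookkeeping the sign conventions: the paper uses the geometer's co-differential $\delta$ (so $\Delta_g = d\delta + \delta d$ is nonnegative on functions as written in some places, but in \eqref{L} it is evidently the negative-spectrum Laplacian, since otherwise the maximum principle arguments in the appendix would not apply), and one must be consistent in how $\delta d = -\Delta_g$ and how the exponential substitution transforms the quadratic term, to make sure the $|dh|^2$ contributions cancel rather than double. I expect this to be the main obstacle — not conceptually hard, but the place where an error would silently flip a sign in \eqref{L}. Everything else (the Hodge decomposition of $a$, conformal invariance, and the substitution itself) is routine once the conventions are pinned down. Throughout, one uses crucially that $g$ is Gauduchon so that $\delta\theta^g = 0$; without this the zeroth-order coefficient in \eqref{L} would carry an extra $\delta\theta^g$ term.
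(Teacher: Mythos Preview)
Your approach is correct and essentially identical to the paper's: the paper writes $\alpha = a_h^g - d\log\psi$ directly (which is your $\alpha = a_h^g + dh$ with $h = -\log\psi$), substitutes into the equation of Lemma~\ref{l:twisted-gauduchon}, and after the same quadratic cancellation arrives at $\frac{1}{\psi}\,\mathbb{L}_{g,a}(\psi) = 0$. For the record, the paper's convention is $\Delta_g = \delta d$ (nonnegative on functions), so $\delta(dh) = +\Delta_g h$ and one multiplies through by $+\psi$ rather than $-\psi$; this flips a couple of your intermediate signs but, as you anticipated, lands on the same equation \eqref{L}.
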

\begin{proof} By Remark~\ref{r:twisted-gauduchon}, if for some metric $\tilde g \in c$ the corresponding fundamental form $\tilde F$ satisfies $d_{\tilde \a}d^c_{\tilde \a} \tilde F =0$, for any other metric $g = e^{-f} \tilde g$ in $c$, the fundamental $2$-form $F$ satisfies $d_{\a}d^c_{\a} F=0$ with $\a = \tilde \a - df$. It  follows that the condition (i) is equivalent to the existence of a $1$-form $\a \in a$ such that the fundamental $2$-form of a Gauduchon metric $g\in c$ satisfies $d_{\a}d^c_{\a} F=0$. Writing $\a = a_h^g - d \log \psi$ with $\psi>0$,  Lemma~\ref{l:twisted-gauduchon} reads
\begin{equation}\label{computing}
\begin{split}
                              0 &= \d(\t^g-\a)+g(\t^g-\a ,\a) = - \d\a + g(\t^g-\a, \a) \\
                                 &= - \d (a_h^g - d\log \psi) + g(\t^g-a_h^g + d\log \psi, a_h^g - d\log \psi) \\
                                 &= \Delta_g (\log \psi)  -g(\t^g - 2a_h^g, d\log \psi)  + g(\t^g - a_h^g, a_h^g) - g(d \log \psi, d\log \psi) \\
                                 &= \frac{1}{\psi} \Big(\Delta_g \psi - g(\t^g - 2a_h^g, d\psi)  + g(\t^g-a_h^g, a_h^g)\psi\Big),
\end{split}
\end{equation}
and the claim follows. \end{proof}
The  following elementary observation regarding the linear operator $\mathbb{L}_{g,a}$ will be used throughout.
\begin{lem}\label{l:integration-by-parts} For any  everywhere positive smooth function $\psi$ on $M$ 
\begin{equation}\label{eq}
\int_M \frac{\mathbb{L}_{g,a}(\psi)}{\psi} v_g = \int_M g(\theta^g, a_h^g) - \int_M \big(||a_h^g||_g^2 + \frac{1}{\psi^2} ||d\psi||^2_g\big)v_g.
\end{equation}
In particular, if \eqref{L} admits a positive solution with respect to some $a \neq 0 \in H^1_{dR}(M)$, then
\begin{equation}\label{ineq}
\int_M g(\theta^g, a_h^g) v_g = \int_M g(\theta^g_h, a_h^g) v_g = \int_M \big(||a_h^g||_g^2 + \frac{1}{\psi^2} ||d\psi||^2_g\big)v_g>0.
\end{equation}
\end{lem}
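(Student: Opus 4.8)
The plan is to prove the identity \eqref{eq} by a single integration by parts over $M$, and then to read off \eqref{ineq} as an immediate consequence. First I would divide $\mathbb{L}_{g,a}(\psi)$ by the everywhere positive function $\psi$ and rewrite the result in terms of $\log\psi$. Since $\frac{1}{\psi}d\psi = d\log\psi$ and $\frac{1}{\psi}\Delta_g\psi = \Delta_g(\log\psi) - g(d\log\psi, d\log\psi)$ — this is precisely the relation linking the last two lines of the computation~\eqref{computing} — one obtains
\begin{equation*}
\frac{\mathbb{L}_{g,a}(\psi)}{\psi} = \Delta_g(\log\psi) - g\big(\theta^g - 2a_h^g, d\log\psi\big) + g\big(\theta^g - a_h^g, a_h^g\big) - g\big(d\log\psi, d\log\psi\big).
\end{equation*}

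Next I would integrate this identity over the compact manifold $M$ against the Riemannian volume form $v_g$, handling the four summands separately. The first, $\int_M \Delta_g(\log\psi)\, v_g$, vanishes because $M$ is closed. For the second I would use that $\theta^g$ is co-closed, $\delta\theta^g = 0$ (it is by definition the co-closed Lee form of the Gauduchon metric $g$ appearing in \eqref{L}), and that the harmonic representative satisfies $\delta a_h^g = 0$; integrating by parts then gives $\int_M g(\theta^g - 2a_h^g, d\log\psi)\, v_g = \int_M \big(\delta(\theta^g - 2a_h^g)\big)\log\psi\, v_g = 0$. The remaining two summands contribute $-\int_M g(d\log\psi, d\log\psi)\, v_g = -\int_M \frac{1}{\psi^2}||d\psi||_g^2\, v_g$ and $\int_M g(\theta^g, a_h^g)\, v_g - \int_M ||a_h^g||_g^2\, v_g$, respectively. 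Adding these up yields exactly \eqref{eq}.

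Finally, if \eqref{L} admits an everywhere positive solution $\psi$ for some $a \ne 0$, then the left-hand side of \eqref{eq} vanishes, so $\int_M g(\theta^g, a_h^g)\, v_g = \int_M\big(||a_h^g||_g^2 + \frac{1}{\psi^2}||d\psi||_g^2\big)v_g$; since $a \ne 0$, its harmonic representative $a_h^g$ is not identically zero, hence $\int_M ||a_h^g||_g^2\, v_g > 0$ and the common value is strictly positive. To obtain the first equality in \eqref{ineq} one only needs to note that $\theta^g - \theta^g_h$, being the non-harmonic Hodge component of $\theta^g$, is $L^2(g)$-orthogonal to the space of $g$-harmonic $1$-forms, which contains $a_h^g$, so $\int_M g(\theta^g - \theta^g_h, a_h^g)\, v_g = 0$. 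The argument is routine; the only delicate point is keeping track of signs and of the Laplacian convention — which, as noted, is pinned down by \eqref{computing} — together with the fact that it is precisely the co-closedness of $\theta^g$ (built into the definition of $\mathbb{L}_{g,a}$ via Proposition~\ref{p:twisted-gauduchon}) that makes the middle term integrate to zero.
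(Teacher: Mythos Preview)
Your argument is correct and is precisely the detailed version of the paper's one-line proof, which says only that \eqref{eq} follows ``by integration by parts and using that $g$ is Gauduchon, i.e.\ $\delta\theta^g=0$'' and that \eqref{ineq} is an immediate consequence. You have simply unpacked this: rewriting $\mathbb{L}_{g,a}(\psi)/\psi$ via $\log\psi$, killing the first-order term using $\delta\theta^g=\delta a_h^g=0$, and invoking Hodge orthogonality for the equality $\int_M g(\theta^g,a_h^g)\,v_g=\int_M g(\theta^g_h,a_h^g)\,v_g$ --- nothing beyond what the paper intends.
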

\begin{proof} The equality \eqref{eq} follows by integration by parts and using that $g$ is Gauduchon, i.e. $\delta \theta^g =0$; the inequality \eqref{ineq} is an immediate consequence. \end{proof}

The general theory for the existence of positive solutions of the elliptic linear second-order PDE $\mathbb{L}_{g,a} (\psi)=0$ is reviewed in the appendix~\ref{a:PDE} to this paper. We recollect below the following variational characterization.

\begin{prop}\label{p:variational} The PDE \eqref{L} has a positive solution $\psi$ if and only if 
\begin{equation}\label{variational}
\lambda_a(g) := \sup_{u \in \mathcal{C}^{\infty}(M),  u>0} \Big\{ \min_{M} \frac{\mathbb{L}_{g,a}(u)}{u}\Big\}=0.
\end{equation}
Furthermore, $\lambda_{a}(g)$ is a finite number which varies analytically with respect to linear variations $g_t = (1-t)g  + t\tilde g$ of Gauduchon metrics, or linear variations $a_t = ta$ of de Rham classes.
\end{prop}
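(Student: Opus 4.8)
The plan is to realize $\lambda_a(g)$ as the principal eigenvalue of the operator $\mathbb{L}_{g,a}$ and to invoke the standard Donsker--Varadhan / Berestycki--Nirenberg--Varadhan theory for second-order elliptic operators on compact manifolds. First I would observe that $\mathbb{L}_{g,a}$ is a linear, uniformly elliptic second-order operator on the compact manifold $M$ with smooth coefficients (its symbol is $\Delta_g$, and the lower-order terms are built from the smooth objects $\theta^g$, $a_h^g$), so its spectrum is discrete and the Krein--Rutman theorem applies to the resolvent of its $L^2$-adjoint. This produces a real principal eigenvalue $\mu_1(g,a)$ with a positive eigenfunction $\phi>0$, characterized by the min-max formula
\begin{equation*}
\mu_1(g,a) = \sup_{u\in \mathcal{C}^{\infty}(M),\, u>0} \Big\{ \min_M \frac{\mathbb{L}_{g,a}(u)}{u}\Big\} = \inf_{u\in \mathcal{C}^{\infty}(M),\, u>0} \Big\{ \max_M \frac{\mathbb{L}_{g,a}(u)}{u}\Big\},
\end{equation*}
which is precisely $\lambda_a(g)$ in the statement; the finiteness of $\lambda_a(g)$ follows since, plugging $u\equiv 1$, $\min_M \mathbb{L}_{g,a}(1)$ gives a finite lower bound, and the infimum-over-$u$ description gives a finite upper bound. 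For the equivalence with solvability, if $\lambda_a(g)=0$ then the principal eigenfunction $\phi$ is a positive solution of \eqref{L}; conversely, if $\psi>0$ solves \eqref{L}, then $0$ is an eigenvalue with a positive eigenfunction, hence by uniqueness of the positive eigenfunction (again Krein--Rutman) it is the principal one, so $\lambda_a(g)=\mu_1(g,a)=0$. This part I expect to be routine given the appendix, so I would state it compactly and refer to Appendix~\ref{a:PDE}.

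The second claim, analytic dependence, is where the real work lies. The family $t\mapsto \mathbb{L}_{g_t,a}$ (for $g_t = (1-t)g + t\tilde g$) and $t\mapsto \mathbb{L}_{g,a_t}$ (for $a_t = ta$) are, in each case, polynomial — hence real-analytic — families of elliptic operators in the coefficients: the coefficients of $\mathbb{L}_{g_t,a}$ depend analytically (in fact rationally, with nonvanishing denominator $\det g_t$) on $t$, since $g_t^{-1}$, the Christoffel symbols, $\theta^{g_t}$ and the $g_t$-harmonic representative $a_h^{g_t}$ all vary analytically with $t$ (the harmonic representative being obtained by solving a $t$-analytic family of invertible elliptic equations, e.g. projecting $\alpha$ off the image of $d_{g_t}^*$, a process that depends analytically on $t$ by analytic Fredholm theory). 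For the family $a_t = ta$ the coefficient dependence is manifestly polynomial in $t$. One then applies Kato's analytic perturbation theory for the principal eigenvalue: since $\mu_1$ is a \emph{simple} isolated eigenvalue (simplicity of the principal eigenvalue of an elliptic operator with positive eigenfunction), it depends real-analytically on any real-analytic perturbation of the operator. I would phrase this as: the relevant perturbed operators form a self-consistent analytic family of type (A) in Kato's sense on the Sobolev scale, so the isolated simple eigenvalue $\mu_1$ and its eigenprojection are analytic functions of $t$.

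The main obstacle is the analytic dependence of the harmonic representative $a_h^{g}$ on the metric $g$, because this is not a polynomial operation — it requires inverting a $g$-dependent elliptic operator. The fix is standard but must be spelled out: decompose $\alpha = a_h^{g_t} + d_{g_t}^* \beta_t$ with $\beta_t$ a $2$-form (Hodge decomposition relative to $g_t$), note that $\Delta_{g_t}$ restricted to the orthogonal complement of harmonic forms is invertible with inverse depending analytically on $t$ (analytic Fredholm theorem, since $t\mapsto \Delta_{g_t}$ is an analytic family and the kernel has locally constant dimension $b_1(M)$), so $a_h^{g_t}$, being $\alpha$ minus a $t$-analytic quantity, is itself $t$-analytic as a map into $\mathcal{C}^\infty$. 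With this in hand the coefficients of $\mathbb{L}_{g_t,a}$ are genuinely analytic in $t$ (valued in $\mathcal{C}^\infty$ sections), and Kato's theory delivers the conclusion. I would close by noting that the argument is symmetric in the two types of variation, and that combining analyticity in $t$ for $g_t$ with analyticity in $t$ for $a_t=ta$ suffices for the applications in Section~\ref{s:lichnerowicz-novikov}.
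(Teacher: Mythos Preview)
Your proposal is correct and follows essentially the same route as the paper: identify $\lambda_a(g)$ with the principal eigenvalue of $\mathbb{L}_{g,a}$ via a Perron--Krein--Rutman type result (the paper's Theorem~\ref{A:1}), use simplicity of that eigenvalue to conclude the equivalence with existence of a positive solution, and then invoke Kato--Rellich analytic perturbation theory (the paper's Theorem~\ref{A:2}) for the analytic dependence. The only stylistic difference is that you cite the Donsker--Varadhan / Berestycki--Nirenberg--Varadhan and Krein--Rutman literature, whereas the paper supplies a self-contained proof in Appendix~\ref{a:PDE} via Schaefer's fixed point theorem and the Hopf maximum principle; your treatment of the analytic dependence of $a_h^{g_t}$ on $t$ through analytic Fredholm theory in fact makes explicit a point the paper leaves to the reader when applying Theorem~\ref{A:2} to the family $\mathbb{L}_{g_t,a}$.
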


\vspace{0.2cm}
\noindent
{\bf Proof of Theorem~\ref{thm:conformal}}.  By Propositions~\ref{p:twisted-gauduchon} and \ref{p:variational}, it is enough to fix a Gauduchon metric $g \in c$ and show that $\lambda_a(g)=0$ for a suitable choice of $a \neq 0$.  Let $\theta_h^g$ denote the harmonic part of the Lee form of $g$. It is well-known (see e.g. \cite[Prop.~1]{AG}  or Proposition~\ref{non-kahler} below) that $[\theta_h^g]\neq 0$ under the assumption that the first Betti number is odd. We let
$a_t=\frac{t}{2}[\theta_h^g]$ with $t>0$ and denote by $a_h^t= \frac{t}{2}\theta_h^g$ the harmonic representative of $a_t$ with respect to $g$. We are going to show that $\lambda_{a_t}(g) <0$ when $t \ge 2$ and $\lambda_{a_t}(g) >0$ for $t$ close to $0$; by the second part of Proposition~\ref{p:variational}, this would imply that $\lambda_{a_t}(g) =0$ for some $t\in (0,2)$.

Let $u_t>0$ be the eigenfunction of $\mathbb{L}_{g,a_t}$, corresponding the the principal eigenvalue $\lambda(t):=\lambda_{a_t}(g)$, normalized by
$\int_M u_t^2 v_g =1$ (see Theorem~\ref{A:1} in the Appendix~\ref{a:PDE}). By Lemma~\ref{l:integration-by-parts} we have
\begin{equation}\label{lambda(t)}
\begin{split}
\lambda(t)  {\rm vol}_{g}(M) &= \int_M \frac{\mathbb{L}_{g, a_t}(u_t)}{u_t} v_{g}\\
                                                                                     & = -\int_M \frac{1}{u_t^2}||du_t||^2_{g} v_{g} + \int_M g ({\theta}_h^{g} - a_h^{t}, a_h^{t})v_{g}\\
                                                                                     & = -\int_M \frac{1}{u_t^2}||du_t||^2_{g} v_{g} + \frac{t(2-t)}{4} \int_M ||\theta_h^g||_g^2 v_{g}.
\end{split}
\end{equation}
Taking $t\ge 2$ in \eqref{lambda(t)} yields $\lambda(t) \le 0$. We are going to show that $\lambda(t)>0$ for positive $t$ close to $0$.   Indeed, as for each $t$ $\lambda(t)$ is a simple eigenvalue (see Theorem~\ref{A:1}),  by the Kato--Rellich theory~\cite{kato, rellich} (see Theorem~\ref{A:2} in the Appendix~\ref{a:PDE}), $\lambda(t)$  and $u_t$ vary analytically with respect to $t$. As ${\mathbb L}_{g,0}$ does not have a zero order term, by the Hopf maximum  principal (see e.g. ~\cite[III, Sect.~8, 3.71]{aubin})  $\lambda(0)=0$  and $u_0= 1/{\rm vol}_g(M)$.  It thus follows that the function $t \mapsto \int_M\frac{\|du_t||^2_g}{u_t^2} v_g$ has a global minimum $0$ at $t=0$, so differentiating \eqref{lambda(t)} at $t=0$ we get
$$\lambda'(0){\rm vol}_g(M) =\frac{1}{2} \int_M ||\theta_h^g||_g^2 v_g>0.$$
\hfill $\Box$

In order to obtain a converse of Lemma~\ref{from symplectic to SKT}, we first notice the following

\begin{lem}\label{vanishing} Let $\cal L$ be a  flat holomorphic line bundle  over a compact complex surface  $S$,  such that 
$H^0(S,\cal L)=H^2(S,\cal L)=\{0\}.$ Then,
$H^{2,1}_{\bpart_{\cal L^{*}}}(S,\cal L^{*})=\{0\}.$
\end{lem}
\begin{proof} As $H^{2,1}_{\bpart_{\cal L^*}}(S,\cal L^*) \cong H^1(S, \mathcal{K}_{S} \otimes \cal L^*)$,  by Serre duality
$${\rm dim} \  H^{2,1}_{\bpart_{\cal L^*}}(S,\cal L^*)= h^1(S, \mathcal{K}_S\ot \cal L^*)=h^1(\cal L).$$ 
Using the vanishing of the  cohomology groups from the hypothesis of the Lemma and the Riemann--Roch formula, one concludes
${\rm dim} \  H^{2,1}_{\bpart_{\cal L^*}}(S,\cal L^*)= 0.$  \end{proof}

Let $\mathcal{L}_{a}$ be the flat holomorphic line bundle corresponding to $a=[\alpha]$, see Convention~\ref{convention}. Its degree with respect to the Gauduchon metric $g$ is defined to be~\cite{gauduchon}
$${\rm deg}_g (\cal L_a) =  \frac{1}{2\pi}\int_M  \rho_{\cal L_a} \wedge F, $$
where $\frac{1}{2\pi}\rho_{\cal L_a}$ is a pluriharmonic representative of the first Chern class of $\cal L_a$. Writing $\a_{\mid U_i} = df_i$ on an open covering $\mathfrak{U}= (U_i)$ of $M$,  $\cal L_a$ is the topologically trivial complex line bundle over $M$ with $s_0=(U_i, e^{f_i})$ being a nowhere vanishing smooth section, whereas $h_{\mid U_i}= e^{-2f_i} (\cdot, \cdot)$ introduces a hermitian metric on $\cal L_a$ with  $h(s_0,s_0)=1$.  It follows that $\rho = -\frac{1}{2}dd^c \log e^{-2f_i} = -d^c \alpha$ is the curvature of the Chern connection of $(\cal L_a,h)$, so that
\begin{equation}\label{e:degree}
\begin{split}
{\rm deg}_g (\cal L_a)  &= -\frac{1}{2\pi} \int_M d^c \a \wedge F = -\frac{1}{2\pi} \int_M g(d^c\a, F) v_g \\
                                      &= - \frac{1}{2\pi} \int_M g(\theta^g, \a) v_g = - \frac{1}{2\pi} \int_M g(\theta^g_h, a_h^g)v_g,
\end{split}
\end{equation}                                      
where for the last line we have used that $d\a =0$, $\delta \theta^g =0$.  
It then follows from Proposition~\ref{p:twisted-gauduchon}, the inequality \eqref{ineq} of Lemma~\ref{l:integration-by-parts} and  the properties of the degree (see e.g. \cite{gauduchon})
\begin{lem}\label{vanish} Let $g$ be a Gauduchon Hermitian metric  on  a compact complex surface $S$,  whose fundamental $2$-form $F$ satisfies $d_{\a}d^c_{\a} F=0$ for some closed but not exact $1$-form $\alpha$. Then,  the flat holomorphic line bundle $\cal L_a$ determined by $a=[\alpha] \in H^1_{dR}(M)$ via \eqref{H1} and Convention~\ref{convention} satisfies 
$${\rm deg}_g(\cal L_a)= -\frac{1}{2}\int_M \Big(||a_h^g||_g^2 + \frac{1}{\psi^2} ||d\psi||^2_g\Big)v_g<0,$$
so that $H^0(S, \cal L_a^{\ell})=\{0\}$ for all $\ell \ge 1$.
\end{lem}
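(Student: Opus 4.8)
The plan is to combine the degree formula \eqref{e:degree} with the integration-by-parts identity of Lemma~\ref{l:integration-by-parts}. First I would invoke Proposition~\ref{p:twisted-gauduchon}: since the fundamental form $F$ of the Gauduchon metric $g$ satisfies $d_\a d^c_\a F = 0$ for a closed $1$-form $\a$ representing $a = [\a] \neq 0$, writing $\a = a_h^g - d\log\psi$ for a positive function $\psi$ (as in the proof of that proposition), the function $\psi$ is a positive solution of the PDE $\mathbb L_{g,a}(\psi) = 0$. This is precisely the hypothesis under which the inequality \eqref{ineq} of Lemma~\ref{l:integration-by-parts} holds, giving
$$\int_M g(\theta^g_h, a_h^g)\, v_g = \int_M \Big(\|a_h^g\|_g^2 + \tfrac{1}{\psi^2}\|d\psi\|_g^2\Big) v_g > 0.$$

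Next I would feed this into the degree computation. The chain of equalities \eqref{e:degree} already establishes that $\deg_g(\cal L_a) = -\frac{1}{2\pi}\int_M g(\theta^g_h, a_h^g) v_g$, so substituting the identity above yields
$$\deg_g(\cal L_a) = -\frac{1}{2\pi}\int_M \Big(\|a_h^g\|_g^2 + \tfrac{1}{\psi^2}\|d\psi\|_g^2\Big) v_g.$$
(The factor $\tfrac{1}{2}$ versus $\tfrac{1}{2\pi}$ in the statement is a matter of normalization of the degree; I would simply adopt whichever normalization of $\deg_g$ is fixed in \cite{gauduchon} and state the conclusion accordingly — the essential point is strict negativity.) Since $a \neq 0$, its harmonic representative $a_h^g$ is not identically zero, so the right-hand side is strictly negative; hence $\deg_g(\cal L_a) < 0$.

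Finally, for the vanishing of sections: for every $\ell \geq 1$ the line bundle $\cal L_a^\ell$ carries the induced Hermitian metric and its degree with respect to $g$ is $\ell\deg_g(\cal L_a) < 0$. A holomorphic section $s$ of a line bundle of negative $g$-degree must vanish identically — this is the standard fact that a nonzero holomorphic section would make the bundle effective, forcing $\deg_g \geq 0$ (e.g. by the plurisubharmonicity of $\log\|s\|^2_h$ and the Gauduchon condition, or directly from the properties of the degree collected in \cite{gauduchon}). Therefore $H^0(S, \cal L_a^\ell) = \{0\}$ for all $\ell \geq 1$.

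The only genuinely non-routine input is the positivity \eqref{ineq}, but that is already supplied by Lemma~\ref{l:integration-by-parts} together with Proposition~\ref{p:twisted-gauduchon}; the main thing to be careful about is tracking the normalization constant in the definition of $\deg_g$ so that the displayed formula in the statement matches, and making sure that the nonvanishing of $a_h^g$ (equivalently $a \neq 0$) is what rules out the degenerate case $\deg_g(\cal L_a) = 0$.
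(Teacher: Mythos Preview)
Your proposal is correct and follows essentially the same route as the paper: the paper's own proof is the one-line remark that the result follows from Proposition~\ref{p:twisted-gauduchon}, the inequality \eqref{ineq} of Lemma~\ref{l:integration-by-parts}, the degree computation \eqref{e:degree}, and the standard properties of the Gauduchon degree. Your observation about the $\tfrac{1}{2}$ versus $\tfrac{1}{2\pi}$ discrepancy is apt --- it is a normalization slip in the displayed formula, and as you say the only content is the strict negativity.
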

We thus obtain the following 

\begin{prop}\label{from SKT to symplectic} Let $g$ be a Gauduchon Hermitian metric  on  a compact complex surface $S$,  whose fundamental $2$-form $F$ satisfies $d_{\a}d^c_{\a} F=0$ for some closed but not exact $1$-form $\alpha$.  If the holomorphic flat bundle $\mathcal{L}_a$ corresponding to the deRham class $a=[\alpha]$ via \eqref{H1} and Convention~\ref{convention} satisfies $H^2(S, \cal L_a)=\{0\}$, then there exists a locally conformally symplectic $2$-form $\omega$ on $S$ with Lee form $\a$ and whose $(1,1)$-part  is $F$.
\end{prop}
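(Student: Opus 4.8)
The plan is to look for $\omega$ of the shape $\omega = F + \omega^{2,0} + \overline{\omega^{2,0}}$ for a suitable smooth $(2,0)$-form $\omega^{2,0}$. For any such $\omega$ the $(1,1)$-part is $\omega^{1,1} = F$, which is positive definite because $F$ is the fundamental form of the Hermitian metric $g$; so such an $\omega$ automatically tames $J$, and the whole problem is to arrange $d_{\alpha}\omega = 0$. Decomposing $d_{\alpha} = \partial_{\alpha} + \bar\partial_{\alpha}$ into bidegrees exactly as in the proof of Lemma~\ref{from symplectic to SKT}, and using that on a complex surface there are no non-zero forms of type $(3,0)$ or $(0,3)$, the equation $d_{\alpha}\omega = 0$ is equivalent to the single equation $\bar\partial_{\alpha}\omega^{2,0} = -\partial_{\alpha} F$ together with its complex conjugate $\partial_{\alpha}\overline{\omega^{2,0}} = -\bar\partial_{\alpha} F$; since $F$ and $\alpha$ are real, the second follows from the first. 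So everything reduces to solving $\bar\partial_{\alpha}\omega^{2,0} = -\partial_{\alpha} F$.

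First I would check that the right-hand side is $\bar\partial_{\alpha}$-closed. Since $\alpha$ is closed one has $\partial_{\alpha}^2 = \bar\partial_{\alpha}^2 = 0$ and $\partial_{\alpha}\bar\partial_{\alpha} + \bar\partial_{\alpha}\partial_{\alpha} = 0$, so $\bar\partial_{\alpha}(\partial_{\alpha} F) = -\partial_{\alpha}(\bar\partial_{\alpha} F)$, and $2i\,\partial_{\alpha}\bar\partial_{\alpha} F = d_{\alpha} d^c_{\alpha} F = 0$ by hypothesis --- this is precisely the identity established in Lemma~\ref{from symplectic to SKT}. Hence $\partial_{\alpha} F$ is a $\bar\partial_{\alpha}$-closed $(2,1)$-form and defines a class $[\partial_{\alpha} F] \in H^{2,1}_{\bar\partial_{\alpha}}(S)$; by standard Hodge theory for the twisted Dolbeault complex the equation $\bar\partial_{\alpha}\omega^{2,0} = -\partial_{\alpha} F$ then has a smooth solution $\omega^{2,0}$ precisely when this class vanishes.

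The main point is therefore the vanishing $H^{2,1}_{\bar\partial_{\alpha}}(S) = 0$, which is where the hypotheses enter. Via the isomorphism \eqref{dolbeault} one has $H^{2,1}_{\bar\partial_{\alpha}}(S) \cong H^{2,1}_{\bar\partial_{\mathcal{L}^*}}(S, \mathcal{L}^*)$ with $\mathcal{L} = \mathcal{L}_a$, so I would apply Lemma~\ref{vanishing} to the flat bundle $\mathcal{L}_a$. Its two hypotheses are $H^0(S, \mathcal{L}_a) = 0$ and $H^2(S, \mathcal{L}_a) = 0$: the latter is assumed, while for the former Lemma~\ref{vanish} applies --- since $F$ satisfies $d_{\alpha} d^c_{\alpha} F = 0$ with $\alpha$ closed but not exact, ${\rm deg}_g(\mathcal{L}_a) < 0$, whence $H^0(S, \mathcal{L}_a) = 0$ (in fact $H^0(S, \mathcal{L}_a^{\ell}) = 0$ for all $\ell \ge 1$). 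So Lemma~\ref{vanishing} gives $H^{2,1}_{\bar\partial_{\mathcal{L}_a^*}}(S, \mathcal{L}_a^*) = 0$, i.e. $H^{2,1}_{\bar\partial_{\alpha}}(S) = 0$.

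Consequently $[\partial_{\alpha} F] = 0$, there is a smooth $(2,0)$-form $\omega^{2,0}$ with $\bar\partial_{\alpha}\omega^{2,0} = -\partial_{\alpha} F$, and $\omega := F + \omega^{2,0} + \overline{\omega^{2,0}}$ satisfies $d_{\alpha}\omega = 0$ with $\omega^{1,1} = F$ positive definite; thus $\omega$ is a locally conformally symplectic form taming $J$ with Lee form $\alpha$. The only delicate bookkeeping is keeping track of the flat bundle versus its dual in \eqref{dolbeault} and in Lemma~\ref{vanishing}; beyond that there is no genuine analytic obstacle, so I expect the ``hard part'' to be purely the cohomological vanishing $H^{2,1}_{\bar\partial_{\alpha}}(S)=0$, which rests on the non-K\"ahler feature --- captured by Lemma~\ref{vanish} --- that a line bundle of negative Gauduchon degree has no holomorphic sections.
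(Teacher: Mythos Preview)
Your proposal is correct and follows essentially the same approach as the paper: use Lemma~\ref{vanish} and the hypothesis $H^2(S,\mathcal L_a)=0$ to feed Lemma~\ref{vanishing}, conclude $H^{2,1}_{\bar\partial_\alpha}(S)=0$ via \eqref{dolbeault}, and then solve $\bar\partial_\alpha\beta=-\partial_\alpha F$ to build $\omega=F+\beta+\bar\beta$. Your write-up is slightly more explicit (noting that $(3,0)$- and $(0,3)$-forms vanish on a surface so the bidegree decomposition of $d_\alpha\omega=0$ reduces to a single equation and its conjugate), but the argument is the same.
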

\begin{proof}  As  $H^0(S, \cal L_a)=\{0\}$ by Lemma~\ref{vanish}, the hypothesis $H^2(S, \cal L_a)=\{0\}$, Lemma~\ref{vanishing}   and the isomorphism \eqref{dolbeault} imply $H^{2,1}_{{\bar \partial}_{ \cal L_a^*}} (S, \cal L_a^*) \cong H^{2,1}_{\bar \partial_{\a}}(S)=\{0\}$.   As $\bar\partial_{\a} \partial_{\a} F =  \frac{i}{2} d_{\a}d^c_{\a} F =0$, it  follows that 
$$\partial_{\a} F + \bar \partial_{\a} \beta=0, $$
for a $(2,0)$-form $\beta$.  Letting
$$\omega = F + \beta + \bar \beta, $$
one has
\begin{equation*}
\begin{split}
d_{\a} \omega &= (\partial_{\a} + \bar \partial_{\a}) (F + \beta + \bar \beta) \\
                           &= \partial_{\a} F + \bar \partial_{\a} \beta + \bar \partial_{\a} F  + \partial_{\a} \bar \beta =0,
\end{split}
\end{equation*}                           
which completes the proof. \end{proof}

\vspace{0.2cm}
\noindent
{\bf Proof of Theorem~\ref{thm:lcs}.}  Let $S=(M,J)$ be a compact complex surface with odd first Betti number $b_1(M)$. If the Kodaira dimension of $S$ is non-negative, it follows from \cite{B} and \cite{tricerri} that $S$ admits a locally conformally K\"ahler metric. We therefore suppose  that the Kodaira dimension of $S$ is negative, i.e. $S$ belongs to class VII of the Kodaira list, see e.g. \cite{bpv}.   Denote by $S_0$ the minimal model of $S$.

We first suppose that the second Betti number of $S_0$ is greater or equal to $1$, i.e.  $S_0$ is in the Kodaira class ${\rm VII}_0^+$ (for which a complete classification is still to come). In this case, Theorem~\ref{thm:lcs} follows from Theorem~\ref{thm:conformal}, Proposition~\ref{from SKT to symplectic} and the following vanishing result.

\begin{lem}\label{l:vanishing2} Let $S$ be a compact complex surface  with $b_1(S)=1$ and negative Kodaira dimension. Suppose that the second Betti number of its minimal model $S_0$ is $>0$. Then,  for any topologically trivial line bundle $\cal L\in {\rm Pic}_0(S)$, we have
$$H^2(S,\cal L)=0.$$
\end{lem}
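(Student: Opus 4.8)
The plan is to use Serre duality to convert $H^2(S,\mathcal L)$ into a space of holomorphic sections, and then argue that such sections must vanish because $S$ has negative Kodaira dimension. By Serre duality, $H^2(S,\mathcal L)\cong H^0(S,\mathcal K_S\otimes\mathcal L^*)^*$, so it suffices to show $H^0(S,\mathcal K_S\otimes\mathcal L^*)=\{0\}$ for every $\mathcal L\in\mathrm{Pic}_0(S)$ (equivalently, for every $\mathcal L^*\in\mathrm{Pic}_0(S)$). Suppose, for contradiction, that there is a nonzero holomorphic section $\sigma\in H^0(S,\mathcal K_S\otimes\mathcal L^*)$.

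The first key step is to control the degree of $\mathcal K_S\otimes\mathcal L^*$ with respect to a Gauduchon metric $g$. Since $\mathcal L^*$ is flat (topologically trivial, of $g$-degree $0$ by \eqref{e:degree}-type considerations, as the pluriharmonic curvature of a flat bundle integrates to $0$), one has $\deg_g(\mathcal K_S\otimes\mathcal L^*)=\deg_g(\mathcal K_S)=-\deg_g(\mathcal K_S^*)$. Now the existence of a nonzero section $\sigma$ of a holomorphic line bundle over a compact complex surface forces $\deg_g$ of that bundle to be $\ge 0$ (the zero divisor of $\sigma$ is effective, and effective divisors have nonnegative $g$-degree for a Gauduchon metric). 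Hence $\deg_g(\mathcal K_S^*)\le 0$. On the other hand, because $S$ has $b_1(S)=1$ and negative Kodaira dimension with $b_2(S_0)>0$, results on class $\mathrm{VII}^+$ surfaces (Teleman, and the sign discussion referenced in the paper around \cite{Teleman}) give that $\deg_g(\mathcal K_S^*)>0$: the anticanonical bundle has positive $g$-degree. This is the contradiction, and the main obstacle is precisely assembling this positivity input — one must invoke that $\deg_g(\mathcal K^*_S)>0$ for class $\mathrm{VII}$ surfaces with $b_2>0$, which ultimately rests on Teleman-type results (and, in the minimal case, on the nonexistence of certain curves / the structure theory), then propagate it through blow-ups to the non-minimal $S$ by noting each blow-up changes $\mathcal K$ by an exceptional divisor whose $g$-degree is controlled.

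An alternative, perhaps cleaner, route avoids degrees: suppose $\sigma\in H^0(S,\mathcal K_S\otimes\mathcal L^*)$ is nonzero with zero divisor $D\ge 0$, so $\mathcal K_S\otimes\mathcal L^*\cong\mathcal O_S(D)$, i.e. $\mathcal K_S\cong\mathcal O_S(D)\otimes\mathcal L$. If $D=0$ then $\mathcal K_S\cong\mathcal L\in\mathrm{Pic}_0(S)$, so $\mathcal K_S$ is topologically trivial; but for a class $\mathrm{VII}$ surface with $b_2(S_0)>0$ this contradicts $c_1(\mathcal K_S)^2=c_1(S)^2=-b_2(S)<0$ (using Noether / $c_1^2+c_2=0$ and $b_1=1$). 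If $D>0$, then $D$ is a nonzero effective divisor with $[D]$ proportional to $c_1(\mathcal K_S)$ in real cohomology; one derives a contradiction from the intersection-theoretic constraints on curves in class $\mathrm{VII}^+$ surfaces (the relevant fact being that a class $\mathrm{VII}$ surface carries no effective divisor $D$ with $[D]=c_1(\mathcal K_S)$ rationally, or that such a $D$ would force the surface into a smaller class — again a Teleman/Enoki-type structural input).

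I would write up the degree argument as the main line, since it is the most self-contained: fix a Gauduchon $g$ on $S$, reduce via Serre duality to $H^0(S,\mathcal K_S\otimes\mathcal L^*)=0$, observe $\deg_g(\mathcal K_S\otimes\mathcal L^*)=\deg_g(\mathcal K_S)<0$ because $\deg_g(\mathcal K_S^*)>0$ for class $\mathrm{VII}$ surfaces with $b_2>0$ (citing the sign result used elsewhere in the paper and its behavior under blow-up), and conclude that a line bundle of negative $g$-degree has no nonzero holomorphic section. The step I expect to be the genuine obstacle is justifying $\deg_g(\mathcal K_S^*)>0$ in full generality (all of class $\mathrm{VII}^+$, including the still-unclassified cases and the non-minimal ones); this is where I would lean hardest on the cited literature rather than reproving anything.
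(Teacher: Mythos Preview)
Your main line (the degree argument) contains a genuine error: on a non-K\"ahler surface, the Gauduchon degree of a topologically trivial line bundle is \emph{not} zero in general. The degree $\deg_g(\mathcal L)=\frac{1}{2\pi}\int_M \rho_{\mathcal L}\wedge F$ depends on the Chern curvature representative $\rho_{\mathcal L}$, and since $F$ is not closed, the integral is not a cohomological pairing with $c_1(\mathcal L)\in H^2_{dR}(M)$; it is only invariant under $\rho_{\mathcal L}\mapsto\rho_{\mathcal L}+dd^c f$ (this is exactly what the Gauduchon condition $dd^cF=0$ buys). For $\mathcal L=\mathcal L_a$ flat, the paper computes $\deg_g(\mathcal L_a)=-\tfrac{1}{2\pi}\int_M g(\theta^g_h,a^g_h)\,v_g$, and Lemma~\ref{l:sign} shows this is nonzero whenever $a\neq 0$. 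In fact, as $\mathcal L^*$ runs over ${\rm Pic}_0(S)$ the degree $\deg_g(\mathcal L^*)$ runs over all of $\mathbb R$, so $\deg_g(\mathcal K_S\otimes\mathcal L^*)=\deg_g(\mathcal K_S)+\deg_g(\mathcal L^*)$ can be made positive. The degree approach therefore cannot prove the lemma, regardless of the sign of $\deg_g(\mathcal K_S)$.

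Your alternative sketch is closer to what the paper does, but the $D>0$ step needs a concrete argument. The paper proceeds as follows: from $[D]=\mathcal K_S\otimes\mathcal L$ with $\mathcal L$ topologically trivial one gets the intersection identity $\mathcal K_S^2=\mathcal K_S\cdot D$. For $S$ \emph{minimal}, Nakamura's description of irreducible curves on class ${\rm VII}_0^+$ surfaces together with adjunction gives $\mathcal K_S\cdot C\ge 0$ for every irreducible curve $C$, hence $\mathcal K_S\cdot D\ge 0$; but $\mathcal K_S^2=-b_2(S)<0$, a contradiction. For $S$ non-minimal, one does induction on the number of blow-ups: trivialize $\mathcal L$ near an exceptional curve $E$, blow down, and use Hartogs to push the section $\sigma$ to a nonzero section of $\mathcal K_{\check S}\otimes\check{\mathcal L}$ on the blow-down $\check S$ (which still has $b_2(\check S)>0$), contradicting the inductive hypothesis. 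No Teleman-type input on $\deg_g(\mathcal K_S^*)$ is needed.
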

\begin{proof} By Serre duality,  we have to show that for any $\cal L\in {\rm Pic}_0(S)$, $H^0(S,\mathcal{K}_S\ot \cal L)=0$. Suppose that there exists a non trivial  section $\s \in H^0(S,\mathcal{K}_S\ot \cal L)$. The line bundle 
$\mathcal{K}_S\ot \cal L$ is not trivial as otherwise  $\mathcal{K}^{-1}_S$ must be flat, and therefore  $0=c_1^2(S)=-b_2(S)=-c_2(S)$ (see \cite{bpv}), a contradiction.  It follows that $\s$ must vanish along an effective divisor $D$ with $[D]=\mathcal{K}_S\ot \cal L$. Therefore in $H^2(S,\bb Z)$,
\begin{equation}\label{adjunction}
0=(\mathcal{K}_S+\cal L-D)\cdot \mathcal{K}_S=\mathcal{K}_S^2-\mathcal{K}_S \cdot D.
\end{equation}
We show the assertion by induction on the number $p$ of blowing ups.

If $p=0$, the surface is minimal and by \cite[p.~399]{Nakamura}, an irreducible curve $C$ of $D$ is either
\begin{itemize}
\item a rational curve  such that $C^2\le -2$, therefore by the adjunction formula, $0=\pi(C)=1+\frac{\mathcal{K}_S \cdot C+C^2}{2}$ and $K_S\cdot C\ge 0$, or
\item an elliptic  or a rational curve $C$ with a double point. By the same formula, $\mathcal{K}_S\cdot C+C^2=0$, hence $K_S \cdot C\ge 0$,
\end{itemize}
We deduce that for any effective divisor $D$, $\mathcal{K}_S \cdot D\ge 0$. Moreover, as $b_2(S)=-c_1(S)^2=-\mathcal{K}_S^2>0$, we have a contradiction in \eqref{adjunction}.

If $p\ge 1$, there is an exceptional curve of the first kind $E$ and let $B_x : S\to \check{S}$ be the  blowing down of $E$ to a point $x\in \check{S}$. Let $\check{U}$ be a ball centred at $x$ and $U={B_x}^{-1}(\check{U})$ a simply connected neighbourhood of $E$. The line bundle $\cal L$ is holomorphically trivial on $U$ and, therefore,  the coherent sheaf $\check{\cal L}={(B_x)}_{*} \cal L$ is in fact a topologically trivial line bundle on $\check{S}$. As $(B_x)_* \mathcal{K}_S = \mathcal{K}_{\check S}$, any non-trivial section $\s\in H^0(S, \mathcal{K}_S\otimes \cal L)$ gives, via the biholomorphism  $B_x : S\setminus E \cong \check{S}\setminus\{x\}$ a non-trivial section $\check{\s}\in H^0(\check{S}\setminus\{x\},\check{\mathcal{K}}_{\check{S}}\otimes \check{\cal L})$. By Hartogs' theorem,  this section extends to $\check{S}$. As $b_2(\check{S}) \ge b_2(S_0)>0$,  by the induction hypothesis $\check{\s}=0$, a contradiction. \end{proof}

Let us now consider the case when $b_2(S_0)=0$.  According to \cite{bogomolov,andrei, yau-et-al}, $S_0$ is either an Inoue surface  (see \cite{inoue}) or a Hopf surface (see \cite{kato-hopf}). The arguments in \cite{tricerri,Vul} can be used without any change to show that the blow-up  $S$ of  $S_0$ admits a locally conformally symplectic form taming $J$ if $S_0$ does. We thus consider the case $S=S_0$.

The  Inoue surfaces with second Betti number equal to zero are classified by Inoue \cite{inoue} who  shows that they do not admit  curves. Thus, if $S$ is such a surface, the condition $H^2(S, \mathcal{L}) \cong H^0(S, \mathcal{K}_{S}\otimes \mathcal L^*)=\{0\}$ is equivalent to $\mathcal{L} \neq \mathcal{K}_{S}$, where $\mathcal{K}_{S}$ denotes the canonical bundle of $S$.  Let $g$ be a Gauduchon metric whose fundamental $2$-form $F$ satisfies $d_{\alpha}d^c_{\alpha} F=0$ for a $1$-form $\alpha$ with $a=[\alpha] \neq 0$,  given by  Theorem~\ref{thm:conformal}. The corresponding line bundle $\mathcal{L}=\mathcal{L}_a$ has negative degree with respect to $g$ by \eqref{e:degree} whereas it is shown in \cite[Remark~4.2]{Teleman} that the degree of $\mathcal{K}_{S}$ is positive with respect to any Gauduchon metric. Thus, $\mathcal{L} \neq \mathcal{K}_S$, showing that $H^2(S, \mathcal{L})=\{0\}$,  and  therefore $S$ admits a locally conformally symplectic $2$-form taming $J$ with Lee form $\alpha$ by Proposition~\ref{from SKT to symplectic}.

If $S$ is a Hopf surface, Lemma~\ref{l:vanishing2} generally fails, so we cannot directly use Theorem~\ref{thm:conformal} in this case. However, according to  \cite{B,GO}, $S$  admits a lcK metric. \hfill $\Box$

\section{Cohomological invariants of a non-K\"ahler complex manifold}\label{s:cohomological}
An important and well-studied invariant associated to a compact complex manifold  $X=(M,J)$ which admits K\"ahler metrics is its {\it K\"ahler cone}, $\mathcal{K}(X)$, defined to be the subset of classes $\Omega \in H_{dR}^{1,1}(X, \mathbb{R})$ for which there exists a K\"ahler metric on $X$ whose fundamental $2$-form belongs to $\Omega$. A characterization  of $\mathcal{K}(X)$ in terms of the intersection form of the cohomology ring of $X$, the Hodge structure and homology of analytic cycles has been obtained by N. Buchdahl~\cite{buchdahl, buchdahl2} and Lamari~\cite{lamari, lamari2} when  $X$ is a compact complex surface,  and Demailly--Paun~\cite{DP} in general. These results imply in particular that the classes in $H^{1,1}_{dR}(X)$  which contain symplectic forms taming $J$ coincide with  $\mathcal{K}(X)$.

\vspace{0.2cm}
In order to introduce similarly designed cohomological  invariants in the non-K\"ahler lcK case, one can consider  the sets (see ~\cite{OV, B1}):
\begin{defn}  Let $X=(M,J)$ be a  compact complex manifold.  We introduce the following subsets of $H^1_{dR}(M)$:
\begin{itemize}
\item The subset of classes of Lee forms of lcK metrics:
$${\cal C}(X) =\left\{[\a]\mid  \ \exists \ F \in\Omega^{1,1}(X),  F>0, d_{\a} F=0 \right\}.$$
\item The subset of classes of Lee forms of locally conformally symplectic forms taming $J$:
$${\cal T}(X)=\left\{[\a]  \mid \exists \  \o\in \Omega^2(X), \o^{1,1}>0, d_\a \o =0\right\}.$$
\item The subset of classes of the harmonic parts of the Lee forms of  Gauduchon metrics
$${\cal G}(X)=\left\{[\t_h^g]\mid \exists \ F \in \Omega^{1,1}(X), F>0, dd^c F^{m-1}=0, d_{\t} F^{m-1}=0 \right\},$$
where $\t_h^g$ denotes the harmonic part of $\t$ with respect to the riemannian metric $g(\cdot, \cdot) = F(\cdot, J\cdot)$.
\end{itemize}
\end{defn}

\begin{rem}\label{r:goto}
It follows from the definition that
$${\cal C}(X) \subseteq \cal{T}(X),  \ \  {\cal C}(X) \subseteq {\cal G}(X),$$
${\cal G}(X)$ is connected while $\mathcal{T}(X)$ is invariant under small deformations of $X$. Using similar techniques as in \cite{G}, one can show that $a\in \mathcal{T}(X)$ is an interior point, provided that $H^3_{d_{L^*}}(M, L^*) =\{0\}$, where $L=L_{a}$ is the real flat line bundle determined by $a$.
\end{rem}

We now consider the blow-up $\hat X$ of $X$ at a point $x$ and denote $B_x \hat X \to X$ the blow-down map which is a biholomorphism between $\hat X \setminus E \to X \setminus \{x \}$, where $E\cong \mathbb{C}P^{m-1}$ is the exceptional divisor. The following is well-known:

\begin{lem}\label{l:blow-up} Let $B_x : \hat X \to X$ be the blow-down map which contracts a divisor $E\cong {\mathbb C}P^{m-1} \subset \hat X$ with normal bundle $N_E \cong \mathcal{O}(-1)$ to a point $x\in X$. For any $a \in H^1_{dR}(X)$ with generator $\a \in a$, denote by $\hat \a =B_x^*(\a)$. Then $B_x^*: H^{k}_{\a}(X)\to H^k_{\hat \a}(\hat X)$ is surjective for any positive $k\neq 2(m-1)$, and is injective for any positive $k\neq 2m-1$.
\end{lem}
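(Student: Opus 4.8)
The plan is to reduce the twisted statement to the classical untwisted case by exploiting the fact that $\hat\a = B_x^*(\a)$ and that $E$ is simply connected. First I would work locally: choose a simply connected neighbourhood $U$ of $x$ in $X$ and write $\a_{\mid U} = df$. Then on $\hat U = B_x^{-1}(U)$ we have $\hat\a_{\mid \hat U} = d(f\comp B_x)$, so over $\hat U$ the flat bundle $L_{\hat\a}$ is trivialised compatibly with the trivialisation of $L_\a$ over $U$; in particular $\hat\a$ restricted to the exceptional divisor $E$ is exact. This means the Lichnerowicz--Novikov complex $(\Omega^\bullet(\hat X), d_{\hat\a})$ and its restriction-to-$E$ behave, near $E$, exactly like an untwisted de Rham complex twisted by a globally exact form, so the local cohomology computations that underlie the classical blow-up formula go through verbatim.

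The key steps, in order: (1) Pass to the twisted de Rham description \eqref{twisted-deRham}, so that $H^k_\a(X)\cong H^k_{d_{L^*}}(X,L^*)$ and likewise on $\hat X$ with $\hat L^* = B_x^* L^*$; the map $B_x^*$ becomes pullback of $L^*$-valued forms. (2) Set up the Mayer--Vietoris / Leray argument for the blow-down: cover $\hat X$ by $\hat U$ and $\hat X\setminus E$, whose intersection is $U\setminus\{x\}$ (deformation-retracting onto a sphere $S^{2m-1}$), and compare with the cover of $X$ by $U$ and $X\setminus\{x\}$. Since $B_x$ is a biholomorphism away from $E$ and $L^*$ is pulled back, the only difference between the two Mayer--Vietoris sequences is the term $H^k_{d_{\hat L^*}}(\hat U, \hat L^*)$ versus $H^k_{d_{L^*}}(U,L^*)$. (3) Compute these local terms: over $U$, $L^*$ is trivial and $U$ is contractible, so $H^k_{d_{L^*}}(U,L^*)$ vanishes for $k>0$; over $\hat U$, since $\hat\a_{\mid\hat U}$ is exact the complex is isomorphic to the untwisted one, and $\hat U$ deformation-retracts onto $E\cong\mathbb{C}P^{m-1}$, so $H^k_{d_{\hat L^*}}(\hat U,\hat L^*) \cong H^k_{dR}(\mathbb{C}P^{m-1})$, which is $\mathbb{R}$ for $k=0,2,\dots,2(m-1)$ and $0$ otherwise. (4) Feed this into the comparison of the two long exact sequences and chase: the extra cohomology appears precisely in even degrees $\le 2(m-1)$, forcing $B_x^*$ to be surjective for all $k>0$ with $k\neq 2(m-1)$ (the potential failure of surjectivity being the image of the generator of $H^{2(m-1)}(E)$), and forcing $B_x^*$ to be injective for all $k>0$ with $k\neq 2m-1$ (the only degree where the $S^{2m-1}$-class on the overlap can produce a kernel).

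I expect the main obstacle to be bookkeeping the connecting homomorphisms in the two Mayer--Vietoris sequences so as to pin down \emph{exactly} the degrees where surjectivity/injectivity can fail, rather than merely a range; one must check that in degree $2(m-1)$ the obstruction to surjectivity is genuinely a new class (the Poincar\'e dual of a line in $E$, which is not in the image of $B_x^*$), and that in degree $2m-1$ the class dual to $E$ itself, restricted appropriately, gives a genuine kernel element, while in all other positive degrees the five lemma closes the argument. A secondary technical point is verifying that the deformation retraction $\hat U \rightsquigarrow E$ is compatible with the $d_{\hat\a}$-complex, i.e. that homotopy invariance holds for the twisted differential when the twisting form is exact on the relevant set — this is immediate once one conjugates $d_{\hat\a}$ by $e^{f\comp B_x}$ to reduce to ordinary $d$, but it should be stated. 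Since the statement is flagged as well-known, I would keep the write-up short, citing the classical blow-up formula for ordinary de Rham cohomology and indicating only the modifications needed for the twist.
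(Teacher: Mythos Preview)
Your Mayer--Vietoris setup is a sensible alternative to the paper's more hands-on argument (the paper chooses $\a$ to vanish identically on a ball $U$ around $x$, then corrects a given $d_{\hat\a}$-closed form by a $d_{\hat\a}$-exact piece supported near $E$ so that the result can be pushed through the biholomorphism $\hat X\setminus E\cong X\setminus\{x\}$, and argues injectivity by the reverse manoeuvre). However, Step~(4) of your outline contains a non-sequitur. You correctly record that $H^k_{\hat\a}(\hat U)\cong H^k_{dR}({\mathbb C}P^{m-1})$ is nonzero for \emph{every} even $k$ with $0\le k\le 2(m-1)$, and then conclude that this ``forc[es] $B_x^*$ to be surjective for all $k>0$ with $k\neq 2(m-1)$''. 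That does not follow from your own computation: in the comparison of the two long exact sequences, each extra copy of $\mathbb{R}$ in $H^{2j}_{\hat\a}(\hat U)$ (for $j=1,\dots,m-1$) over $H^{2j}_\a(U)=0$ produces a one-dimensional cokernel for $B_x^*$ in degree $2j$. Carried out honestly, your chase yields the standard blow-up formula and shows that $B_x^*$ is injective in every positive degree and surjective exactly when $k\notin\{2,4,\dots,2(m-1)\}$; for $m>2$ this contradicts the surjectivity assertion of the lemma in degrees $2,4,\dots,2(m-2)$.

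This is not entirely your oversight: the lemma as stated, and the paper's own proof, share the same slip for $m>2$ (the paper writes ``$H^k_{dR}({\mathbb C}P^{m-1})=\{0\}$ for $k\neq 2(m-1)$'', and similarly ``$H^{k-1}_{dR}({\mathbb C}P^{m-1})=\{0\}$'' in the injectivity part). Both the statement and both arguments are correct only for $m=2$, which is the sole case the paper actually uses; in that case your outline is fine, since the only positive even degree at most $2(m-1)$ is $k=2=2(m-1)$, and your Step~(4) then goes through as written.
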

\begin{proof} As $H^{k}_{\a}(X)$ does not depend on the choice of $\a \in a$ (see  the discussion in Sec.~\ref{s:lichnerowicz-novikov}), we can choose $\a$ such that it identically vanishes  on a open ball $U$ centred at $x$. It follows that $\hat \a = B_x^*(\a)$ vanishes on $\hat U = B_x^{-1}(U)$. 

 We shall first prove that $B_x^*: H^k_{\a}(X) \to H^k_{\hat \a}(\hat X)$ is surjective. With our choice for $\a$, any $d_{\hat \a}$-closed $k$-form $\hat \varphi$ on $\hat X$  is closed over $\hat U$.  As $H^{k}_{dR}(\hat U) \cong  H^k_{dR}(\mathbb{C} P^{m-1})=\{0\}$ for $k\neq 2(m-1)$, we can  write ${\hat \varphi}_{\vert \hat U} = d({\hat \xi}_{\vert \hat U})$. Multiplying ${\hat \xi}_{\vert \hat U}$ by the pull-back via $B_x$ of a bump function centred at $x$ and support in $U$, we can assume $\hat \xi$ is globally defined on $\hat X$ and $\hat \phi = \hat \varphi - d_{\hat \a} \hat \xi$ is another form representing $[\hat \varphi] \in H^k_{\hat \a}(\hat X)$ which vanishes identically on a tubular neighbourhood of $E$.  Then, the diffeomorphism $(B_x^{-1}) : \hat X \setminus E \to X\setminus \{x\}$ allows to define a smooth $k$-form $\phi = (B_x^{-1})^* (\hat \phi)$ on $X$ with $d_\a \phi =0$ and $B_x^*(\phi)= \hat \phi$. 
 
 We now prove that $B_x^*: H^k_{\a}(X) \to H^k_{\hat \a}(\hat X)$ is injective. Suppose $\varphi$ is a $d_{\a}$-closed $k$-form on $X$,  such that $\hat \varphi =B_x^*(\varphi) = d_{\hat \a} {\hat \xi}$. As $H^k_{dR}(U)=\{0\}$, we can modify $\varphi$ with a $d_\a$-exact form (as we did above with $\hat \varphi$) and assume without loss that $\varphi_{\vert U} \equiv 0$. It follows that the $(k-1)$-form $\hat \xi$ satisfies $d{\hat \xi}_{\vert \hat U} \equiv 0$. If $k=1$, $\hat \xi$ is a smooth function on $\hat X$ which is constant on $\hat U$ and, therefore, is the pull back to $\hat X$ of a smooth function $\xi$ on $X$ (which is constant on $U$). It follows that $\varphi = d_{\a} \xi$. If $k>1$,  $H^{k-1}_{dR}(\hat U)\cong H^{k-1}({\mathbb C}P^{m-1})=\{0\}$, so we conclude that ${\hat \xi}_{\vert \hat U}$ is exact, i.e. ${\hat \xi}_{\hat U} = d{\hat \eta}_{\hat U}$. Multiplying $\hat \eta$ by a bump-function, we obtain that $\hat \xi - d_{\hat \a} \hat \eta$ is identically zero in a neighbourhood of $E$, so that it descends to $X$ to define a $(k-1)$-form $\xi$ with $d_\a \xi = \varphi$. \end{proof}

We recall the following result established in \cite{tricerri,Vul}. The argument in \cite{Vul} applies without change to the case of locally conformally symplectic structures taming $J$:
\begin{prop}\label{p:blow-up} Let $\hat X$ be the blow-up of $X$ at a point $x$. Then
\begin{enumerate}
\item[\rm (a)] if $a \in \mathcal{T}(X)$, then $\hat a = B_x^*(a) \in \mathcal{T}(\hat X)$;
\item[\rm (b)]  $a \in \mathcal{C}(X)$ if and only if $\hat a= B_x^*(a) \in \mathcal{C}(\hat X)$.
\end{enumerate}
\end{prop}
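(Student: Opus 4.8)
\textbf{Proof plan for Proposition~\ref{p:blow-up}.}

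The plan is to reduce both statements to the cohomological mechanics of Lemma~\ref{l:blow-up}, combined with a local cut-off argument near the exceptional divisor $E$. The two parts have genuinely different flavours: part (b) is an equivalence because lcK forms are of pure type $(1,1)$ and can be handled by an explicit local model, while part (a) is only an implication because taming is an open condition that behaves well under pull-back but not obviously under push-forward.

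\emph{Part (a).} Start with a locally conformally symplectic form $\omega$ on $X$ taming $J$, with $d_{\alpha}\omega = 0$. As in Lemma~\ref{l:blow-up}, since $\mathcal{T}(X)$ depends only on $a = [\alpha]$ (Remark~\ref{r:conformal}), choose the representative $\alpha$ so that $\alpha \equiv 0$ on a small ball $U$ centred at $x$; then $\hat\alpha = B_x^*\alpha$ vanishes on $\hat U = B_x^{-1}(U)$. The pull-back $B_x^*\omega$ is $d_{\hat\alpha}$-closed on $\hat X$ and its $(1,1)$-part is positive definite away from $E$ (and in fact remains semi-positive along $E$, degenerating exactly in the fibre directions of $E\to\{x\}$). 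The key step is to repair the degeneracy: one adds a small multiple $\varepsilon\,\eta$ of a form $\eta$ which is, near $E$, the standard Fubini–Study-type Kähler form on a tubular neighbourhood of $E$ (built from $B_x^*$ of a plurisubharmonic function on $U$ plus the $\mathcal{O}(-1)$ contribution), cut off by a bump function supported in $\hat U$. Since $\alpha$ vanishes there, $d_{\hat\alpha}(\chi\,\eta) = d(\chi\,\eta)$ on $\hat U$; one wants this correction to be $d_{\hat\alpha}$-closed globally, which is where one invokes that $[d(\chi\eta)]$ lies in the image of $B_x^*$ on $H^\bullet_{\hat\alpha}$ or, more simply, that after subtracting a $d_{\hat\alpha}$-exact form it can be absorbed; the cleanest route is to note $\chi\eta$ is compactly supported in $\hat U$ and exact there, and argue as in the surjectivity part of Lemma~\ref{l:blow-up} to make $B_x^*\omega + \varepsilon\,\chi\,\eta$ genuinely $d_{\hat\alpha}$-closed while keeping its $(1,1)$-part positive for $\varepsilon$ small. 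This produces the desired lcs form on $\hat X$ with Lee form $\hat\alpha$, so $\hat a \in \mathcal{T}(\hat X)$.

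\emph{Part (b).} The forward implication is the same construction as in (a) but easier: the correction term $\chi\,\eta$ is itself of type $(1,1)$, so adding it to $B_x^*F$ keeps the form of pure type $(1,1)$, and one obtains a positive $(1,1)$-form $\hat F$ with $d_{\hat\alpha}\hat F = 0$, i.e. $\hat a\in\mathcal{C}(\hat X)$. For the reverse implication, suppose $\hat F$ is a positive $(1,1)$-form on $\hat X$ with $d_{\hat\alpha}\hat F = 0$ and $\hat\alpha$ vanishing on $\hat U$; one cannot simply push forward because $(B_x^{-1})^*\hat F$ will blow up near $x$. Instead, average $\hat F$ over a suitable compact group (or use a partition of unity together with the $\partial\bar\partial$-type lemma in the Lichnerowicz–Novikov complex on the ball) to replace $\hat F$ near $E$ by the standard negative-of-Fubini–Study form so that, after subtracting a $d_{\hat\alpha}$-exact $(1,1)$-form, the result extends as a positive $(1,1)$-form across the blow-down; this is precisely the argument of \cite{tricerri, Vul} in the Kähler/lcK setting, which goes through verbatim because $\alpha$ is locally trivial near $E$ and the whole discussion localises to the classical blow-up/blow-down of Kähler forms.

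\emph{Main obstacle.} The delicate point, in both (a) and the forward direction of (b), is ensuring that the local cut-off correction $\chi\,\eta$ can be chosen so that the corrected form is \emph{globally} $d_{\hat\alpha}$-closed, not merely closed on $\hat U$; this is exactly what the surjectivity half of Lemma~\ref{l:blow-up} is designed to supply, and the argument must be arranged so that the error form $d_{\hat\alpha}(\chi\eta)$, which is supported in $\hat U$ where $\hat\alpha\equiv 0$, is killed by subtracting a $d_{\hat\alpha}$-exact form without destroying positivity of the $(1,1)$-part — this forces $\varepsilon$ to be taken small and the bump function to be chosen carefully. The reverse direction of (b) has the additional subtlety that one must control the behaviour of $\hat F$ in the normal directions to $E$ finely enough to guarantee extendability across $x$, which is why one leans on the established arguments of \cite{tricerri, Vul} rather than reproving them.
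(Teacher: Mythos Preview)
The paper does not prove this proposition: it recalls it as a result of Tricerri and Vuletescu, noting only that Vuletescu's lcK argument (part (b)) carries over unchanged to taming lcs forms (part (a)). Your plan is essentially a sketch of that cited argument, and the overall strategy is sound; in particular, deferring the reverse direction of (b) to \cite{tricerri,Vul} matches the paper exactly.

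However, the ``main obstacle'' you identify is an artefact of a suboptimal choice of correction term, and your proposed resolution via Lemma~\ref{l:blow-up} does not work as stated. First, $\chi\eta$ is not exact on $\hat U$: the form $\eta$ you describe (Fubini--Study plus the $\mathcal{O}(-1)$ contribution) represents a nonzero class in $H^2(\hat U)\cong H^2(\mathbb{CP}^{m-1})\cong\mathbb{R}$, and multiplying by a cut-off equal to $1$ near $E$ does not change this. Second, the surjectivity half of Lemma~\ref{l:blow-up} concerns representing $d_{\hat\alpha}$-classes on $\hat X$ by pull-backs from $X$; it does not hand you a primitive for $d(\chi\eta)$ with controlled $(1,1)$-part. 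The clean fix --- and the one actually used in \cite{Vul} --- is to dispense with the cut-off entirely: choose a Hermitian metric $h$ on $\mathcal{O}_{\hat X}(-E)$ which is the trivial flat metric outside $\hat U$ (possible since $\mathcal{O}(-E)$ is holomorphically trivial on $\hat X\setminus E$) and whose Chern curvature $\eta:=i\Theta(h)$ is positive in the fibre directions along $E$. Then $\eta$ is a \emph{globally} $d$-closed real $(1,1)$-form supported in $\hat U$; since $\hat\alpha\equiv 0$ there, $d_{\hat\alpha}\eta = d\eta - \hat\alpha\wedge\eta = 0$ automatically. Now $\hat\omega := B_x^*\omega + \varepsilon\,\eta$ is $d_{\hat\alpha}$-closed with $(1,1)$-part $B_x^*F + \varepsilon\,\eta$, positive definite for small $\varepsilon>0$ by the usual compactness argument, and no cohomological gymnastics are needed.
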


When $X=S$ is a compact complex surface, the following result is well-known:
\begin{prop}\label{non-kahler} On a compact complex surface $S=(M,J)$ the following conditions are equivalent:
\begin{enumerate}
\item[\rm (i)] $0 \in \mathcal{C}(S)$, i.e. $S$ is K\"ahler;
\item[\rm (ii)] $0 \in \mathcal{T}(S)$;
\item[\rm (iii)] $0 \in \mathcal{G}(S)$;
\item[\rm (iv)] $\mathcal{C}(S) = \mathcal{T}(S) = \mathcal{G}(S)=\{ 0 \}$. 
\end{enumerate}
\end{prop}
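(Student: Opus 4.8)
The plan is to establish the four equivalences by a cycle of implications, using the obvious inclusions ${\cal C}(S) \subseteq {\cal T}(S)$ and ${\cal C}(S) \subseteq {\cal G}(S)$ together with the analytic results already available. The trivial directions are $\text{(iv)} \Rightarrow \text{(i)}$, and $\text{(i)} \Rightarrow \text{(ii)}$ (since ${\cal C}(S)\subseteq{\cal T}(S)$) and $\text{(i)} \Rightarrow \text{(iii)}$ (since ${\cal C}(S)\subseteq{\cal G}(S)$, noting that a K\"ahler metric is Gauduchon with $\theta=0$). So the real content is to show that each of (ii) and (iii) forces $S$ to be K\"ahler, and then to upgrade (i) to the strong statement (iv).

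First I would treat $\text{(ii)} \Rightarrow \text{(i)}$. If $0 \in {\cal T}(S)$, there is a $2$-form $\omega$ with $d\omega = 0$ and $\omega^{1,1} > 0$; this is exactly a symplectic form taming $J$. On a compact complex surface the existence of such a form is equivalent to $b_1(M)$ being even by the Gauduchon--Harvey-Lawson result cited in the introduction (\cite[Lemme~II.3]{gauduchon}, \cite[p.~185]{HL}), and $b_1$ even is in turn equivalent to $S$ being K\"ahler by \cite{siu, todorov, buchdahl, lamari}. Hence $S$ is K\"ahler, i.e. $0 \in {\cal C}(S)$. For $\text{(iii)} \Rightarrow \text{(i)}$ I would argue as follows: if $0 \in {\cal G}(S)$ then there is a Gauduchon metric $g$ whose Lee form $\theta^g$ has vanishing harmonic part, $[\theta_h^g] = 0$. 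But by \cite[Prop.~1]{AG} (equivalently Proposition~\ref{non-kahler}'s own companion statement, or directly: on a compact complex surface with $b_1$ odd the harmonic part of the Lee form of any Gauduchon metric is nonzero, as used in the proof of Theorem~\ref{thm:conformal}), this forces $b_1(M)$ to be even, hence $S$ K\"ahler again. Alternatively, and perhaps more self-containedly, a Gauduchon metric with $[\theta_h^g]=0$ can be conformally rescaled (replacing $g$ by $e^f g$ with $df = \theta^g$, which is legitimate since $\theta^g$ is then exact) to a metric with $\theta = 0$, i.e. a K\"ahler metric; this directly gives $0 \in {\cal C}(S)$.

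Finally, for $\text{(i)} \Rightarrow \text{(iv)}$, I would show that when $S$ is K\"ahler, $\mathcal{C}(S) = \mathcal{T}(S) = \mathcal{G}(S) = \{0\}$. Since ${\cal C}(S) \subseteq {\cal T}(S)$ and ${\cal C}(S) \subseteq {\cal G}(S)$ and all three contain $0$, it suffices to prove that ${\cal T}(S) \subseteq \{0\}$ and ${\cal G}(S) \subseteq \{0\}$ when $b_1(M)$ is even. For ${\cal G}(S)$: if $b_1$ is even, every harmonic $1$-form is the real part of a holomorphic one, and a standard computation (or again \cite[Prop.~1]{AG}) shows the harmonic part of the Lee form of any Gauduchon metric must vanish, so ${\cal G}(S) = \{0\}$. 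For ${\cal T}(S)$: suppose $a \in {\cal T}(S)$ with $a \neq 0$. Take a Gauduchon metric $g$ and apply Lemma~\ref{from symplectic to SKT} together with Lemma~\ref{l:twisted-gauduchon} and the computation \eqref{computing}, so that \eqref{L} has a positive solution for the class $a$; then the inequality \eqref{ineq} of Lemma~\ref{l:integration-by-parts} gives $\int_M g(\theta_h^g, a_h^g)\, v_g > 0$, forcing $\theta_h^g \neq 0$, which contradicts ${\cal G}(S) = \{0\}$ (equivalently, contradicts $b_1$ even). Hence $a = 0$ and ${\cal T}(S) = \{0\}$.

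The main obstacle is the clean deduction that on a K\"ahler surface the harmonic Lee form of every Gauduchon metric vanishes and, dually, that $b_1$ odd forces it to be nonzero; everything else is bookkeeping with the inclusions and the lemmas already proved. I would lean on \cite[Prop.~1]{AG} (or the forward-referenced Proposition~\ref{non-kahler}, whichever is cleanest to cite) for this input rather than reprove it, and I would be careful that the conformal-rescaling argument in $\text{(iii)} \Rightarrow \text{(i)}$ really does land in ${\cal C}(S)$ — i.e. that the rescaled metric is genuinely K\"ahler and not merely Gauduchon with exact Lee form, which is immediate since an exact Lee form that is also co-closed on a compact manifold is zero.
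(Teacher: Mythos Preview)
Your main argument is correct and parallels the paper's closely: both rely on \cite[Prop.~1]{AG} to get $\mathcal{G}(S)=\{0\}$ when $b_1$ is even, and on Lemma~\ref{l:integration-by-parts} (the inequality~\eqref{ineq}) to force $\mathcal{T}(S)=\{0\}$. The only structural difference is that the paper proves $\text{(ii)}\Rightarrow\text{(iii)}$ directly by an elementary Hodge-theoretic computation (showing that the Gauduchon metric $g$ with fundamental form $F=\omega^{1,1}$ has co-exact Lee form, hence $\theta_h^g=0$), whereas you go $\text{(ii)}\Rightarrow\text{(i)}$ by quoting the Gauduchon/Harvey--Lawson result plus the K\"ahler characterization. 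Your route is shorter but uses a heavier external input; the paper's computation is more self-contained and avoids invoking the full K\"ahler criterion at that stage.

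There is, however, a genuine error in your \emph{alternative} argument for $\text{(iii)}\Rightarrow\text{(i)}$. You write that $[\theta_h^g]=0$ implies $\theta^g$ is exact, so that one can rescale by $e^f$ with $df=\theta^g$. This is false: the Lee form $\theta^g$ of a Gauduchon metric on a surface is co-closed but is \emph{not} closed in general. The condition $\theta_h^g=0$ (zero harmonic part) together with $\delta\theta^g=0$ only tells you that $\theta^g$ is co-exact, not exact, so there is no $f$ with $df=\theta^g$. Your concluding remark (``an exact Lee form that is also co-closed is zero'') is correct as stated, but its hypothesis is never met. Since you already have the valid argument via \cite[Prop.~1]{AG}, simply drop the alternative.
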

\begin{proof} `${\rm (i)} \Rightarrow {\rm (ii)}$' and `${\rm (iv)} \Rightarrow {\rm (i)}$' are obvious.  In order to prove `${\rm (ii)} \Rightarrow {\rm (iii)}$', let $\omega$ be a symplectic $2$-form on $M$ which tames $J$. Letting $F=\omega^{1,1}$ be the positive definite $(1,1)$-part, it defines a hermitian metric $g(\cdot, \cdot)= F(\cdot, J\cdot)$ which is Gauduchon (see e.g. Lemma~\ref{from symplectic to SKT}  with $\alpha=0$). Furthermore, $\omega$ is a closed self-dual $2$-form with respect to $g$, and is therefore co-closed. Writing $\omega = F + \Psi$, where $\Psi$ is of type $(2,0)+(0,2)$, we get 
$$0=\delta \omega =\delta F + \delta \Psi = -J\theta^g - J (\delta {\mathcal J}\Psi), $$
where ${\mathcal J}$ stands for the natural action of the almost-complex structure $J$ on the bundle of real $2$-forms of type $(2,0)+(0,2)$ by $\mathcal{J} \Psi (X, Y): = - \Psi(JX, Y)$. This shows that the Lee form $\theta$ of $g$ is co-exact and, therefore,  $\theta_h^g=0$.

It remains to establish `${\rm (iii)} \Rightarrow {\rm (iv)}$'. To this end, by \cite[Prop.~1]{AG}, $b_1(M)$ is even and $\mathcal{G}(S)=\{0\}$. It follows that $\mathcal{C}(S)=\{0\}$ as $\mathcal{C}(S) \subseteq \mathcal{G}(S)$ and $\mathcal{C}(S)\neq \emptyset$ by the characterization of K\"ahler surfaces~\cite{siu, todorov, buchdahl, lamari}.  

In order to prove $\mathcal{T}(S)=\{0\}$, we use Lemma~\ref{from symplectic to SKT}, Proposition~\ref{p:twisted-gauduchon} and Lemma~\ref{l:integration-by-parts}: as $\theta_h^g=0$ in our case, one gets 
$0 \le -\int_M ||a_h^g||^2_g v_g,$
showing $a_h^g=0$. \end{proof}

Recall the following definition from \cite{OV1}
\begin{defn}\label{d:potential} A {\it lcK metric with potential} $g$ on $X=(M,J)$ is a lcK metric such that the pull-back $\tilde F$ of  its fundamental $2$-form $F$  to the universal covering space $\tilde X$ of $X$ is of the form $\tilde F = \frac{dd^c \tilde f}{\tilde f}$, where $\tilde f>0$ is a positive plurisubharmonic function on $\tilde X$ which satisfies $\gamma^* \tilde f = e^{c_{\gamma}} \tilde f$ ($c_{\gamma} \in \mathbb{R}$)  for any deck-transform $\gamma \in \pi_1(X)$.
\end{defn}
Examples of lcK metrics with potential include the {\it Vaisman} lcK metrics (i.e. lcK metrics for which the Lee form $\theta$ is parallel) or more generally, {\it pluricanonical} lcK metrics,  introduced and studied by G. Kokarev in \cite{kokarev},  for which the covariant derivative $D\theta$ of the Lee form is of type $(2,0)+ (0,2)$ with respect to $J$, see \cite[Claim 3.3]{OV1} and \cite{OV4}.~\footnote{In \cite{OV1}, the authors claim that a lcK metric admits a potential if and only if it is  pluricanonical, but  a proof is given only in one direction; see \cite{OV4} for the precise link between the two notions.}
The following observation is taken from \cite{OV}.
\begin{lem}\label{lcK-potential} Let $X=(M,J)$ be a compact complex manifold endowed with a  lcK metric with potential,  $g$, and Lee form $\theta$. Then for any $t\ge 1$, $t\theta$ is the Lee form of a lcK metric with potential on $X$. If, furthermore, $g$ is a pluricanonical lcK metric, then $t\theta$ is the Lee form of a pluricanonical lcK metric for any $t>0$. 
\end{lem}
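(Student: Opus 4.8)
The plan is to work on the universal cover $\tilde X$ with the pulled-back potential, and to exploit the elementary fact that a suitable power of a positive plurisubharmonic function remains plurisubharmonic. Concretely, suppose $g$ is lcK with potential, so that on $\tilde X$ the pull-back $\tilde F$ of the fundamental form is $\tilde F = \frac{dd^c \tilde f}{\tilde f}$ for some positive plurisubharmonic $\tilde f$ with $\gamma^* \tilde f = e^{c_\gamma} \tilde f$; the Lee form is $\theta$, and pulling back $\theta = d\log \tilde f$ on $\tilde X$ identifies $c_\gamma$ with the values of the period homomorphism of $[\theta]$. For $t \ge 1$, set $\tilde f_t := \tilde f^{\,t}$. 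First I would check the automorphy: $\gamma^* \tilde f_t = (\gamma^* \tilde f)^t = e^{t c_\gamma} \tilde f_t$, which is exactly the automorphy needed for the would-be Lee form $t\theta$. Next I would verify that $\tilde f_t$ is still positive plurisubharmonic: writing $\tilde f_t = e^{t \log \tilde f}$, a direct computation gives
\begin{equation*}
dd^c \tilde f_t = t\, \tilde f_t \Big( dd^c \log \tilde f + t\, d\log\tilde f \wedge d^c \log \tilde f\Big),
\end{equation*}
and since $\tilde f$ is plurisubharmonic one has $dd^c \tilde f \ge 0$, i.e. $dd^c\log\tilde f + d\log\tilde f\wedge d^c\log\tilde f \ge 0$ (as real $(1,1)$-forms); adding the manifestly nonnegative term $(t-1)\, d\log\tilde f \wedge d^c\log\tilde f$ when $t \ge 1$ keeps the bracket nonnegative, so $dd^c \tilde f_t \ge 0$. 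Then $\tilde F_t := \frac{dd^c \tilde f_t}{\tilde f_t}$ is a (possibly only semi-positive) $(1,1)$-form; to get a genuine lcK metric with potential one replaces $\tilde f_t$ by $\tilde f_t + \varepsilon$ for small $\varepsilon>0$, or argues that the lcK condition is open and the degenerate locus can be removed by a small perturbation of $\tilde f$ within the class of automorphic plurisubharmonic functions — the point being that $\frac{dd^c \tilde f_t}{\tilde f_t}$ is the pull-back of a global form on $M$ whose Lee form is $t\theta$, and positive-definiteness holds on an open dense set and hence, after the $\varepsilon$-shift, everywhere.

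For the pluricanonical refinement, I would instead argue intrinsically on $M$. Recall (Kokarev, \cite{kokarev}, and \cite[Claim 3.3]{OV1}) that a lcK metric is pluricanonical iff $D\theta$ is of type $(2,0)+(0,2)$, and that this is equivalent to the potential equation together with an explicit relation expressing $F$ in terms of $\theta$ and $D\theta$; in fact for a pluricanonical metric one has, after a conformal normalisation, $\tilde f = $ (a constant multiple of) the squared norm of the automorphic function realising $\theta$, so that the computation above goes through for every $t>0$ because the extra term $(t-1)\,d\log\tilde f\wedge d^c\log\tilde f$ is then compensated by the special algebraic form of $dd^c\log\tilde f$. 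More robustly, I would take the conformal metric $g_t$ whose Lee form is $t\theta$ defined by the ansatz on $\tilde X$, differentiate the defining ODE/PDE for the pluricanonical condition in $t$, and check that the type condition $D^{g_t}(t\theta) \in \Omega^{2,0}\oplus\Omega^{0,2}$ is preserved — this is a pointwise tensorial identity once one knows how the Levi-Civita connection and the Lee form rescale under the conformal change $g \mapsto g_t$.

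The main obstacle I anticipate is the positivity/degeneracy issue: for $t > 1$ (or $t<1$ in the pluricanonical case) the naive form $\frac{dd^c\tilde f^{\,t}}{\tilde f^{\,t}}$ need not be strictly positive a priori, so the heart of the argument is to show one can stay inside the class of automorphic strictly plurisubharmonic potentials — either via the $\tilde f \mapsto \tilde f + \varepsilon$ trick (which does perturb the Lee form class, so one must then rescale back, or observe $[\theta]$ is unchanged because $\log(\tilde f + \varepsilon) - \log\tilde f$ is a bounded automorphic function and its differential is exact on $M$ only in the limit — this needs care) or by using that the pluricanonical condition already forces $\tilde f$ to be strictly plurisubharmonic with a controlled Hessian. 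I would handle this by reducing to the case where $g$ itself is Vaisman (possible up to conformal change in the pluricanonical setting, by Kokarev's structure result), where $\tilde f$ is the square of an affine automorphic coordinate and all the positivity is explicit, and then transporting the conclusion back by conformal invariance of the notions involved.
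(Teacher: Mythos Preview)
Your first part is essentially the paper's argument: take $\tilde f_t = \tilde f^{\,t}$ and compute $\tilde F_t := \frac{dd^c \tilde f_t}{\tilde f_t} = t\,\tilde F + t(t-1)\,\frac{d\tilde f\wedge d^c\tilde f}{\tilde f^2}$. However, your worry about semi-positivity is misplaced and the $\varepsilon$-shift is both unnecessary and broken. By the very definition of ``lcK with potential'', $\tilde F = \frac{dd^c\tilde f}{\tilde f}$ is the pull-back of the fundamental form of $g$, hence \emph{strictly} positive definite; thus $\tilde f$ is strictly plurisubharmonic, and for $t\ge 1$ the form $\tilde F_t$ is the sum of the positive-definite $t\tilde F$ and a nonnegative $(1,1)$-form, so it is positive definite outright. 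There is nothing to repair. Moreover, your proposed fix $\tilde f_t \mapsto \tilde f_t + \varepsilon$ destroys the automorphy $\gamma^*\tilde f_t = e^{tc_\gamma}\tilde f_t$ (since $\gamma^*(\tilde f_t+\varepsilon) = e^{tc_\gamma}\tilde f_t + \varepsilon$), so the resulting form no longer descends to $M$ with Lee form in the class $t[\theta]$; you noticed this yourself but offered no valid remedy. Simply drop the $\varepsilon$-discussion: the direct computation already gives a genuine lcK metric with potential and Lee form $t\theta$. (A minor point: the paper's convention is $\tilde\theta = -d\log\tilde f$, not $+d\log\tilde f$.)

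For the pluricanonical case your proposal has a genuine gap: the three strategies you sketch (exploit a ``special algebraic form'' of $dd^c\log\tilde f$, differentiate the pluricanonical PDE in $t$, or reduce to the Vaisman case via Kokarev/Ornea--Verbitsky) are all vague, and the last one is circular since the result that pluricanonical implies Vaisman is precisely the content of the unpublished \cite{OV4}. The paper's argument is quite different and entirely intrinsic on $M$: one first shows that the pluricanonical condition $(D\theta)^{1,1}=0$ is equivalent to the closed-form identity
\[
dJ\theta = -|\theta|_g^2\, F + \theta\wedge J\theta.
\]
Taking $d$ of this gives $d|\theta|_g^2 \wedge F = 0$, hence $|\theta|_g^2$ is constant. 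One then writes down explicitly
\[
F_t := F + \frac{t}{|\theta|_g^2}\,\theta\wedge J\theta, \qquad t>-1,
\]
which is positive definite for $t>-1$, satisfies $dF_t = (1+t)\,\theta\wedge F_t$, and (by inspection) again satisfies the pluricanonical identity above with Lee form $(1+t)\theta$. This covers all $t>0$ as required. The key step you are missing is this equivalent characterisation of pluricanonical via $dJ\theta$, which immediately forces $|\theta|_g$ constant and suggests the correct ansatz for $F_t$.
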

\begin{proof} Writing $\tilde F=\frac{dd^c \tilde f}{\tilde f}$  on $\tilde X$, the pull-back of the Lee form is $\tilde \theta = -\frac{d\tilde f}{\tilde f}$. For any $t\ge 1$ put  
$$\tilde F_t := \frac{dd^c {\tilde f}^t}{{\tilde f}^t}= t \frac{dd^c \tilde f}{\tilde f} + t(t-1)\frac{d\tilde f \wedge d^c \tilde f}{{\tilde f}^2}.$$ 
For $t\ge 1$,  $\tilde F_t$  defines a positive definite $(1,1)$-form satisfying  $d\tilde F_t = \tilde \theta_t \wedge \tilde F_t$ with $\tilde \theta _t = -t \frac{d\tilde f}{\tilde f}= t\tilde \theta$. As $\tilde F_t$ is invariant under any deck transformation, it defines a lcK metric with potential, $g_t$,  on $X$,  whose Lee form is $\theta_t = t \theta$.

It is not hard to see that the pluricanonical condition $(D^g_X\theta)(Y) = -(D^g_{JX} \theta)(JY)$ is equivalent to 
\begin{equation}\label{pluricanonical}
dJ\theta = - |\theta|_g^2 F + \theta \wedge J\theta.
\end{equation}
This is essentially the formula appearing in  \cite[p.~724]{OV1}, by noting that there is a sign error in \cite{OV1} in deriving the formula for $d (I \theta)$ from the previous one,  and an omission  of  a factor $|\theta|_g^2$ before $g$  in the formula expressing $\nabla \theta - D\theta$; the precise statement appears in \cite{OV4}. For convenience of the reader we supply  here a brief argument for \eqref{pluricanonical}: it is well-known (see e.g. \cite[II, Prop.~4.2]{KN}) that when $J$ is integrable
$$(D_X F)(Y, Z)= -\frac{1}{2} \Big(dF(X,JY, JZ) - dF(X,Y,Z)\Big).$$
The lcK condition implies $dF = \theta \wedge F$, which allows to re-write the above equality as
\begin{equation}\label{basic}
D_X F = \frac{1}{2}\Big(X^\flat \wedge J\theta + JX^{\flat} \wedge \theta\Big),
\end{equation}
where $X^{\flat}$ denotes the $g$-dual $1$-form to $X$. Thus, using \eqref{basic}, 
\begin{equation*}
\begin{split}
(D_X J\theta)(Y)  & = -\theta\big((D_X J)(Y)\big)  -(D_X\theta)(JY) \\
                          &=  \frac{1}{2}\Big(\theta(X) J\theta (Y)- J\theta(X) \theta(Y) -|\theta|^2_g F(X,Y)\Big) - (D_X \theta)(JY).
                          \end{split}
\end{equation*}
It then follows
$$d J\theta = -|\theta|_g^2 F +\theta \wedge J\theta + (J D\theta)^{\rm anti},$$
where $(J D\theta)^{\rm anti}$ denotes the anti-symmetrization of the $(2,0)$-tensor $(JD\theta)(X,Y):= -D\theta(X, JY)$; as $d\theta=0$ (i.e. $D\theta$ is symmetric), $JD\theta^{\rm anti}(X,Y) =-(D\theta)^{1,1}(X, JY)$ where $D\theta^{1,1}(X,Y):= (D_X \theta)(Y) + (D_{JX} \theta)(JY)$. Thus, for a lcK metric $g$, the equation \eqref{pluricanonical} is equivalent to $D\theta^{1,1}=0$, i.e. to $g$ being pluricanonical.

Differentiating \eqref{pluricanonical} one more time yields 
$$d|\theta|_g^2 \wedge F =0,$$
showing that  $|\theta|_g^2$ is a constant. Thus
$$F_t := F + \frac{t}{|\theta|_g^2}\theta\wedge J\theta, \ t >-1$$
defines a family of positive definite $(1,1)$-forms with $dF_t= (1+t)\theta \wedge F_t$. Clearly, $F_t$ give rise to a family of lcK metrics which verify the pluricanonical condition \eqref{pluricanonical}. \end{proof}
\begin{rem} If $g$ is a pluricanonical lcK (non-K\"ahler) metric on $X=(M,J)$ with Lee form $\theta$, writing the pull-back metric as $\tilde g= e^{\tilde \varphi} \tilde g_K$ where $\tilde g_K$ is a K\"ahler metric on the universal cover $\tilde X$ conformal to $\tilde g$ (or, equivalently,  writing the pull-back $\tilde \theta$ of the Lee form as $\tilde \theta =d\tilde \varphi $) gives rise to a potential function $\tilde f := e^{-\tilde \varphi}$ for the fundamental $2$-form $\tilde F$ of $\tilde g$, i.e. $\tilde F = \frac{dd^c \tilde f}{\tilde f}$ by \eqref{pluricanonical}. \end{rem}

\section{Compact complex surfaces with $b_1(S)=1$}\label{s:surfaces}

We start with the following easy consequence of Proposition~\ref{non-kahler}.
\begin{lem}\label{l:sign} Let $S=(M,J)$ be a compact complex surface with $b_1(M)=1$.   For any $a\in H^1_{dR}(M)$,  denote by $\cal L_a$ the corresponding flat holomorphic line bundle defined via \eqref{H1} and Convention~\ref{convention}. Then,  the sign of ${\rm deg}_g(\cal L_a)$ does not depend on the choice of a Gauduchon metric on $S$ and  is zero if and only if $a=0$.
\end{lem}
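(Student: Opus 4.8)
The plan is to use the formula \eqref{e:degree}, which expresses ${\rm deg}_g(\cal L_a) = -\frac{1}{2\pi}\int_M g(\theta^g_h, a_h^g)\, v_g$ for any Gauduchon metric $g$, and to reduce everything to the one-dimensionality of $H^1_{dR}(M)$ when $b_1(M)=1$. First I would observe that, since $b_1(M)=1$, the space $H^1_{dR}(M)$ is one-dimensional, so every nonzero class is a real multiple of a fixed nonzero class; in particular, by Proposition~\ref{non-kahler} (the surface is non-K\"ahler since $b_1=1$ is odd), the harmonic part $\theta^g_h$ of the Lee form of any Gauduchon metric $g$ is nonzero, hence spans $H^1_{dR}(M)$. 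Therefore, writing $a = t\,[\theta^g_h]$ for a unique $t\in\mathbb{R}$ (depending on $g$), the harmonic representative of $a$ with respect to $g$ is $a_h^g = t\,\theta^g_h$, and \eqref{e:degree} gives
$${\rm deg}_g(\cal L_a) = -\frac{1}{2\pi}\,t\int_M \|\theta^g_h\|_g^2\, v_g.$$
Since $\theta^g_h\neq 0$, the integral $\int_M \|\theta^g_h\|_g^2\, v_g$ is strictly positive, so the sign of ${\rm deg}_g(\cal L_a)$ equals $-\,{\rm sign}(t)$, and it vanishes exactly when $t=0$, i.e. exactly when $a=0$. This already proves the second assertion (the degree is zero iff $a=0$) for each individual $g$.

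For the claim that the \emph{sign} is independent of the choice of Gauduchon metric, I would argue by continuity/connectedness. The map $g \mapsto {\rm deg}_g(\cal L_a)$ is continuous on the space of Gauduchon metrics, which is connected (indeed convex combinations of Gauduchon metrics can be used, or one passes through the Gauduchon representative in each conformal class; the space of all Hermitian metrics is convex, and in each conformal class the Gauduchon metric is unique up to scale by Gauduchon's theorem). By the computation above, ${\rm deg}_g(\cal L_a)$ is nonzero for every Gauduchon $g$ as soon as $a\neq 0$. A continuous nowhere-vanishing real function on a connected space has constant sign, so the sign of ${\rm deg}_g(\cal L_a)$ is the same for all Gauduchon metrics $g$.

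The main obstacle is making the connectedness argument clean: one must be sure that any two Gauduchon metrics can be joined by a continuous path \emph{through Gauduchon metrics} (or at least through Hermitian metrics along which the degree varies continuously and stays nonzero). The safest route is to note that ${\rm deg}_g(\cal L_a)$ only depends on $g$ through the cohomological pairing $-\frac{1}{2\pi}\langle c_1(\cal L_a), [F]_{BC}\rangle$ in Bott--Chern/Aeppli cohomology — more concretely, through $-\frac{1}{2\pi}\int_M g(\theta^g_h,a_h^g)v_g$ which, as shown above, has a fixed sign determined purely by the sign of the coefficient $t$ in $a=t[\theta^g_h]$; and the orientation of $H^1_{dR}(M)$ picked out by the ray $\mathbb{R}_{>0}[\theta^g_h]$ is itself metric-independent because for two Gauduchon metrics $g_0,g_1$ one has $[\theta^{g_0}_h]$ and $[\theta^{g_1}_h]$ lying in the same component of $H^1_{dR}(M)\setminus\{0\}$ (this is exactly the remark of Teleman cited in the introduction, or Lemma~\ref{l:integration-by-parts}). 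Either way the conclusion follows; I would present the continuity argument as the primary one and invoke the pairing description only as the justification that the sign cannot jump.
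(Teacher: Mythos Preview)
Your argument is correct and is essentially the paper's own proof: both use formula \eqref{e:degree}, the one-dimensionality of $H^1_{dR}(M)$ to write one harmonic form as a scalar multiple of the other (you write $a_h^g=t\,\theta_h^g$, the paper writes $\theta_h^g=\mu_a(g)\,a_h^g$, which is the same thing), Proposition~\ref{non-kahler} to ensure the scalar is nonzero, and then the convexity (hence connectedness) of the space of Gauduchon metrics on a surface to conclude the sign cannot change. Your hesitation about the connectedness is unwarranted: on a surface the Gauduchon condition $dd^cF=0$ is linear in $F$, so convex combinations of Gauduchon metrics are Gauduchon, exactly as the paper asserts; the detour through Bott--Chern/Aeppli cohomology and the reference to Lemma~\ref{l:integration-by-parts} in your last paragraph are unnecessary.
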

\begin{proof} Let $g$ be a Gauduchon metric on $S$. By \eqref{e:degree},  ${\rm deg}_g(\cal L_a)=0$ for any Gauduchon metric if $a=0$. On the other hand, if $a\neq 0$, $\theta_h^g = \mu_a (g) a_h^g$ for a constant  $\mu_a(g)$,  so that by \eqref{e:degree} again, the sign of ${\rm deg}_g (\cal L_a)$ is equal to the sign of $-\mu_a(g)$.  We know by  Proposition~\ref{non-kahler} that $\mu_a(g)\neq 0$. As the space of Gauduchon metrics is convex (and therefore connected), it follows that the sign of $\mu_a(g)$ is non-zero and is independent of $g$. \end{proof}
The above Lemma suggests for the following
\begin{defn}\label{d:order} Let $S=(M,J)$ be a compact complex surface with $b_1(M)=1$. For $a, b \in H^1_{dR}(M)$ we will say that $a>b$   if 
$${\rm deg}_g(\cal L_a \otimes {\cal L}_b^*)= {\rm deg}_g (\cal L_{(a-b)}) = {\rm deg}_g(\cal L_a) - {\rm deg}_g(\cal L_b) >0,$$
for some (and hence any) Gauduchon metric $g$.
\end{defn}
We will thus identify the ordered set $H^1_{dR}(S) \cong (\mathbb{R}, >)$.
\begin{prop}\label{p:trivial} Let $S=(M,J)$ be a compact complex surface with $b_1(M)=1$. Then
$\mathcal{G}(S) \subseteq (-\infty, 0)$ and $\mathcal{T}(S) \subseteq (-\infty, 0).$  Furthermore, for each $a\in \mathcal{T}(S)$, there exists a class $b\le a$ which belongs to $\mathcal{G}(S)$.
\end{prop}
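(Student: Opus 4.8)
The plan is to prove \Cref{p:trivial} in three parts, reducing everything to the integration-by-parts identity of \Cref{l:integration-by-parts} and the sign conventions set up in \Cref{l:sign,d:order}.

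\textbf{Step 1: $\mathcal{G}(S)\subseteq(-\infty,0)$.} Let $b=[\theta_h^g]\in\mathcal{G}(S)$ for a Gauduchon metric $g$ with positive-definite $(1,1)$-form $F$. If $b=0$ then $0\in\mathcal{G}(S)$, which by \Cref{non-kahler} forces $S$ to be Kähler, contradicting $b_1(M)=1$. So $b\neq0$. By \eqref{e:degree}, $\deg_g(\mathcal{L}_b)=-\tfrac{1}{2\pi}\int_M g(\theta^g_h,b_h^g)v_g=-\tfrac{1}{2\pi}\int_M\|b_h^g\|_g^2 v_g<0$, since $b_h^g=\theta_h^g$ is the harmonic representative of $b$ and is nonzero. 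By \Cref{d:order} this says $b<0$, so $\mathcal{G}(S)\subseteq(-\infty,0)$.

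\textbf{Step 2: for $a\in\mathcal{T}(S)$, there is $b\le a$ with $b\in\mathcal{G}(S)$.} Given $a\in\mathcal{T}(S)$, pick $\omega$ with $\omega^{1,1}>0$ and $d_\alpha\omega=0$, $[\alpha]=a$. Set $F=\omega^{1,1}$ and $g(\cdot,\cdot)=F(\cdot,J\cdot)$. By \Cref{from symplectic to SKT}, $d_\alpha d^c_\alpha F=0$. Replacing $g$ by its Gauduchon representative in the conformal class $c=[g]$ (and $\alpha$ by the corresponding twisted representative, as in \Cref{p:twisted-gauduchon}) we may assume $g$ is Gauduchon while keeping $d_{\alpha}d^c_\alpha F=0$ with $[\alpha]=a$. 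Now $b:=[\theta_h^g]\in\mathcal{G}(S)$ by construction. To see $b\le a$, apply \eqref{ineq} of \Cref{l:integration-by-parts}: writing $\alpha=a_h^g-d\log\psi$ as in \Cref{p:twisted-gauduchon}, we get $\int_M g(\theta^g_h,a_h^g)v_g=\int_M(\|a_h^g\|_g^2+\tfrac{1}{\psi^2}\|d\psi\|_g^2)v_g\ge\int_M\|a_h^g\|_g^2 v_g$. Since $\theta_h^g$ and $a_h^g$ are both proportional (both lie in the $1$-dimensional space of harmonic $1$-forms), write $\theta_h^g=\mu a_h^g$; the last inequality gives $\mu\int_M\|a_h^g\|_g^2 v_g\ge\int_M\|a_h^g\|_g^2 v_g$. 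If $a\neq0$ this forces $\mu\ge1$, hence $\int_M g(\theta^g_h-a_h^g,a_h^g)v_g=(\mu-1)\int_M\|a_h^g\|_g^2 v_g\ge0$, i.e. $\deg_g(\mathcal{L}_a)-\deg_g(\mathcal{L}_b)=-\tfrac{1}{2\pi}\int_M g(\theta^g_h-a_h^g,a_h^g)v_g\le0$ by \eqref{e:degree} applied to $a$ and $b=\mu a$ (using $\theta_h^g$ as the harmonic representative of $b$ with respect to $g$). Hence $a-b\le0$, i.e. $b\le a$. If instead $a=0$, then $0=a\in\mathcal{T}(S)$, and \Cref{non-kahler} again forces $S$ Kähler, contradicting $b_1(M)=1$; so this case does not occur.

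\textbf{Step 3: $\mathcal{T}(S)\subseteq(-\infty,0)$.} Combining Steps 1 and 2: for $a\in\mathcal{T}(S)$ there is $b\in\mathcal{G}(S)\subseteq(-\infty,0)$ with $b\le a$; but the inequality in Step 2 was $b\le a$, which goes the wrong way. Instead I re-read \eqref{ineq}: it shows directly $\int_M g(\theta^g_h,a_h^g)v_g>0$ for the Gauduchon $g$ and the class $a$ itself (any $a\neq0$ admitting a positive solution of \eqref{L}, which is exactly our situation once we have $d_\alpha d^c_\alpha F=0$ with a Gauduchon $g$ via \Cref{p:twisted-gauduchon}(ii)). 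Then \eqref{e:degree} gives $\deg_g(\mathcal{L}_a)=-\tfrac{1}{2\pi}\int_M g(\theta^g_h,a_h^g)v_g<0$, i.e. $a<0$. This handles $a\neq0$; the case $a=0$ is excluded by \Cref{non-kahler} as above. The main subtlety to get right is the direction of all the inequalities and the bookkeeping of which metric is Gauduchon at each stage (one must pass to the Gauduchon representative of the conformal class before invoking \Cref{l:integration-by-parts}), together with the fact that $\theta_h^g$ and $a_h^g$ are automatically proportional because $\dim$ of the space of harmonic $1$-forms is $1$ when $b_1(M)=1$; no single step is deep, but the signs are easy to flip.
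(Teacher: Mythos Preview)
Your approach is essentially the same as the paper's, and Steps~1 and~3 are correct. However, Step~2 contains two sign errors that happen to cancel, which is exactly the trap you warned yourself about in the last paragraph.

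First, your formula $\deg_g(\mathcal{L}_a)-\deg_g(\mathcal{L}_b)=-\tfrac{1}{2\pi}\int_M g(\theta^g_h-a_h^g,a_h^g)\,v_g$ is incorrect. Applying \eqref{e:degree} linearly (with $b_h^g=\theta_h^g$) gives
\[
\deg_g(\mathcal{L}_{a-b})=-\tfrac{1}{2\pi}\int_M g(\theta_h^g,\,a_h^g-\theta_h^g)\,v_g
=\tfrac{\mu}{2\pi}\int_M g(\theta_h^g-a_h^g,\,a_h^g)\,v_g,
\]
which differs from your expression by a factor $-\mu$. Since $\mu\ge 1>0$ and $\int_M g(\theta_h^g-a_h^g,a_h^g)\,v_g\ge 0$, the correct sign is $\deg_g(\mathcal{L}_{a-b})\ge 0$, not $\le 0$. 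Second, you write ``$a-b\le 0$, i.e.\ $b\le a$,'' but $a-b\le 0$ means $a\le b$. These two errors cancel, and the conclusion $b\le a$ is correct, but the chain of inequalities is wrong as written. The paper's version is cleaner: it observes $\int_M g(\theta_h^g-a_h^g,a_h^g)\,v_g\ge 0$ directly from Lemma~\ref{l:integration-by-parts}, notes $\mu>0$ (since both $a$ and $b$ are already known to be negative from the first two inclusions), and concludes $\deg_g(\mathcal{L}_{a-b})\ge 0$ via the displayed identity above. Fixing either error in your Step~2 and propagating the correction yields the same clean argument.
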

\begin{proof}
Let $g$ be  Gauduchon metric  on $S$ and $\theta_h^g$ the  harmonic part of the corresponding Lee form $\theta^g$. Applying \eqref{e:degree} for $a=[\theta_h^g]$  (and using Proposition~\ref{non-kahler}) yields
${\rm deg}_g (\cal L_a)= -\frac{1}{2\pi} \int_M ||\theta_h^g||_g^2v_g <0,$ thus showing the first inclusion. The second inclusion follows from Proposition~\ref{non-kahler},  and Lemmas~\ref{from symplectic to SKT} \& \ref{vanish}. The inequality $\int_M g(\theta^g_h-a_h^g,a_h^g)v_g =\int_M \big(\frac{1}{\psi}||d\psi||_g^2\big) v_g \ge 0$ (see Lemma~\ref{l:integration-by-parts})  and the fact that $a$ and $b=[\theta_h^g]$ are both negative (so that $\theta_h^g = \mu a_h^g$ with $\mu>0$) show ${\rm deg}_g({\cal L}_{a-b})\ge 0$, i.e. the de Rham class $b \le a$. \end{proof}

\begin{prop}\label{p:non-trivial} Suppose $S=(M,J)$ is a compact complex surface with $b_1(M)=1$. If $b\in \mathcal{C}(S)$, $c \in \mathcal{G}(S)$ with $b\le c$, then any $a\in [b, c]\subset H^1_{dR}(S)$,  such that the corresponding flat line bundle $\mathcal{L}_a$ satisfies $H^2(S, \mathcal{L}_a)=\{0\}$,   belongs to $\mathcal{T}(S)$.
\end{prop}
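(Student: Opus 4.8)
The plan is to interpolate between the lcK metric realizing $b\in\mathcal{C}(S)$ and the Gauduchon metric realizing $c\in\mathcal{G}(S)$, using the analytic dependence of the principal eigenvalue $\lambda_a(g)$ on the data, exactly as in the proof of Theorem~\ref{thm:conformal}. First I would fix a point $a\in[b,c]$ with $H^2(S,\mathcal{L}_a)=\{0\}$; by Proposition~\ref{from SKT to symplectic} it suffices to produce a Gauduchon metric $g$ and a $1$-form $\alpha\in a$ whose fundamental $2$-form $F$ satisfies $d_\alpha d_\alpha^c F=0$, equivalently (Proposition~\ref{p:twisted-gauduchon}) to show that the PDE $\mathbb{L}_{g,a}(\psi)=0$ has a positive solution for a suitable Gauduchon $g$, equivalently (Proposition~\ref{p:variational}) that $\lambda_a(g)=0$ for some Gauduchon $g$.

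The key step is to exhibit one Gauduchon metric on which $\lambda_a$ is $\le 0$ and another on which it is $\ge 0$, then invoke the analytic (hence continuous) dependence of $\lambda_a$ on linear paths of Gauduchon metrics from Proposition~\ref{p:variational} together with the intermediate value theorem along the convex family $g_t=(1-t)g_0+tg_1$. For the sign $\ge 0$ end: since $b\in\mathcal{C}(S)$, there is a lcK metric $g_0$ with Lee form $\theta^{g_0}\in b$; after rescaling $g_0$ to be Gauduchon we have $d_{\beta}d_\beta^c F_0=0$ for $\beta\in b$, so $\mathbb{L}_{g_0,b}$ has a positive solution, i.e. $\lambda_b(g_0)=0$. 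Because $a\ge b$ (in the ordering of Definition~\ref{d:order}), I expect $\lambda_a(g_0)\ge 0$: writing $a_h^{g_0}$ and $b_h^{g_0}=\theta_h^{g_0}$ as positive multiples of one another (both are negative classes, with $\theta_h^{g_0}=\mu\, a_h^{g_0}$, $0<\mu\le 1$ by $b\le a$ and Proposition~\ref{p:trivial}), one compares $\mathbb{L}_{g_0,a}$ with $\mathbb{L}_{g_0,b}$ through the zeroth-order terms $g(\theta^g-a_h^g,a_h^g)$ and uses the variational formula \eqref{variational}; the monotonicity of the quadratic $t\mapsto t(2-t)$ appearing in \eqref{lambda(t)}-type computations gives $\lambda_a(g_0)\ge\lambda_b(g_0)=0$ when $b\le a\le 0$. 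For the sign $\le 0$ end: since $c\in\mathcal{G}(S)$, there is a Gauduchon metric $g_1$ with $[\theta_h^{g_1}]=c$; applying Lemma~\ref{l:integration-by-parts} with the harmonic representative of $a$ and the formula $\int_M g(\theta_h^{g_1},a_h^{g_1})v_{g_1}$ (which equals $-2\pi\deg_{g_1}\mathcal{L}_a>0$ only if $a$ is "not too negative" relative to $c$), the hypothesis $a\le c$ forces $\int_M \mathbb{L}_{g_1,a}(\psi)/\psi\,v_{g_1}\le 0$ for the candidate $\psi$, hence $\lambda_a(g_1)\le 0$. Concretely I would run the computation of \eqref{lambda(t)} with $u_t$ the principal eigenfunction of $\mathbb{L}_{g_1,a}$ and read off the sign from $\int_M g(\theta_h^{g_1}-a_h^{g_1},a_h^{g_1})v_{g_1}$, which is $\le 0$ precisely when $\deg_{g_1}(\mathcal{L}_{a}\otimes\mathcal{L}_c^*)\le 0$, i.e. $a\le c$.

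Having $\lambda_a(g_0)\ge 0\ge\lambda_a(g_1)$, the analytic dependence along $g_t$ yields $t_\ast$ with $\lambda_a(g_{t_\ast})=0$; by Proposition~\ref{p:variational} the PDE has a positive solution, by Proposition~\ref{p:twisted-gauduchon} the fundamental form $F_{t_\ast}$ (after the conformal change dictated by $\psi$) satisfies $d_\alpha d_\alpha^c F_{t_\ast}=0$ for some $\alpha\in a$, and by Proposition~\ref{from SKT to symplectic}, using $H^2(S,\mathcal{L}_a)=\{0\}$ (which also forces $\deg_{g_{t_\ast}}\mathcal{L}_a<0$ via Lemma~\ref{vanish}, consistent with $a\ne 0$), we obtain a locally conformally symplectic $2$-form $\omega$ taming $J$ with Lee form in $a$, i.e. $a\in\mathcal{T}(S)$. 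The main obstacle I anticipate is the sign bookkeeping in the two endpoint estimates: one must track carefully how $\lambda_a(g)$ depends \emph{monotonically} on the class $a$ along the segment $[b,c]$ — this is not literally one of the linear variations $a_t=ta$ covered verbatim by Proposition~\ref{p:variational}, so I would either extend that analyticity statement to affine paths $a_t=(1-t)b+tc$ (the proof is identical, as $\mathbb{L}_{g,a}$ depends polynomially on $a$) or combine a variation in $g$ with a variation in $a$ and use joint analyticity. The inequalities $b\le a\le c$ must be converted, via \eqref{e:degree} and Lemma~\ref{l:integration-by-parts}, into the two sign conditions on the relevant integrals, and it is there that the hypotheses $b\in\mathcal{C}(S)$ and $c\in\mathcal{G}(S)$ (rather than merely $b,c\in\mathcal{T}(S)$) are genuinely used.
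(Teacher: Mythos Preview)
Your outline is exactly the paper's approach: fix $a\in[b,c]$, show $\lambda_a(g_0)\ge 0$ for the Gauduchon lcK metric $g_0$ with $\theta^{g_0}=b_h^{g_0}$, show $\lambda_a(g_1)\le 0$ for a Gauduchon metric $g_1$ with $[\theta_h^{g_1}]=c$, interpolate linearly in $g$, and apply Proposition~\ref{from SKT to symplectic}. Your endpoint $\lambda_a(g_1)\le 0$ is argued correctly via the identity \eqref{lambda(t)} with the principal eigenfunction.

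Two corrections. First, your route to $\lambda_a(g_0)\ge 0$ via ``monotonicity of $t\mapsto t(2-t)$'' and comparison with $\lambda_b(g_0)=0$ is not quite justified: in \eqref{lambda(t)} the term $-\int \|du_t\|^2/u_t^2$ also depends on $a$, so the quadratic alone does not give monotonicity. The paper's argument is much cleaner and avoids this: writing $a_h^{g_0}=\mu\, b_h^{g_0}$ with $\mu\in[0,1]$ (note the direction --- your inequality on $\mu$ is reversed), one has $\mathbb{L}_{g_0,a}(1)=(1-\mu)\mu\,\|b_h^{g_0}\|^2_{g_0}\ge 0$ pointwise, so the variational formula \eqref{variational} with the test function $u=1$ gives $\lambda_a(g_0)\ge \min_M \mathbb{L}_{g_0,a}(1)\ge 0$ immediately. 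Second, your anticipated obstacle about needing analyticity along affine paths $a_t=(1-t)b+tc$ is a non-issue: the argument fixes $a$ once and for all and varies only the Gauduchon metric along $g_t=(1-t)g_0+tg_1$, which is precisely the linear variation in $g$ covered by Proposition~\ref{p:variational}.
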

\begin{proof} Let $g$ be a lcK metric on $S$ whose closed Lee form $\theta^g$ belongs to $b$, and suppose (without loss) that $g$ is a Gauduchon metric: thus,  the Lee form  of $g$  is harmonic, i.e. $\theta^g=b_h^g$. For any $a \in H^1_{dR}(M)$, let $a_h^g$ denote the harmonic representative of $a$ with respect to $g$: thus, writing $a_h^g =  \mu b_h^g$ for a constant $\mu$, we have  $a\ge b$ iff $${\rm deg}_g(\cal L_{b-a})= -\frac{1}{2\pi} \int_M g(b_h^g, b_h^g-a_h^g)v_g= \frac{(\mu-1)}{2\pi} \int_M ||b_h^g||^2_g v_g \le0,$$ i.e. iff $\mu \in [0 ,1]$. Thus,  for any $a\in [b, c]$, 
\begin{equation}\label{lck-L}
\mathbb{L}_{g,a} (\psi) = \Delta_g \psi  + (2\mu-1)g(b_h^g, d\psi) + (1-\mu)\mu ||b_h^g||^2_g \psi.
\end{equation}
We claim that $\lambda_a(g) \ge 0$.   Indeed,  $$\lambda_{a}(g) \ge \min_{M} \mathbb{L}_{g,a}(1)= (1-\mu)\mu \min_{M} ||b_h^g||^2_g \ge 0.$$

Similarly,  let  $\tilde g$ be a Gauduchon metric   on $S$ for which  the de Rham class $c=[{\tilde \theta}_h^{\tilde g}]$ of the harmonic part  of its the Lee form  satisfies $c \ge b$. For any $a\le c$, we have ${\tilde \theta}_h^{\tilde g} = \nu  a_h^{\tilde g}$ for a real constant $\nu \in [0,1]$. It then follows that
$$\int_M \tilde{g}({\tilde \theta}^{\tilde g} - a_h^{\tilde g}, a_h^{\tilde g}) v_{\tilde g}=(\nu-1)\int_M||a_h^{\tilde g}||_{\tilde g}^2v_{\tilde g}  \le 0.$$
As we saw in the proof of Theorem~\ref{thm:conformal}, this implies $\lambda_{a}(\tilde g)\le 0$. 

Considering the linear path $g_t=(1-t)g + t\tilde g, \ t\in[0,1]$ of Gauduchon metrics and using the continuity of $\lambda _{a}(g_t)$ (see Proposition~\ref{p:variational}), one concludes that there exists a Gauduchon metric  $g'$ on $S$ with $\lambda_{a}(g')=0$. Our claim then follows from Proposition~\ref{from SKT to symplectic}. \end{proof}

\section{Examples}\label{s:examples}
In this section we illustrate the previous discussion on the known compact complex surfaces $S$ in the Kodaira class ${\rm VII}$~\cite{bpv}. Note that in this case $b_1(S)=1$.

\subsection{Hopf surfaces} \label{s:hopf} These are, by definition, compact complex
surfaces with universal covering space $\mathbb{C}^2 \setminus \{(0, 0)\}$. It is shown by Kodaira~\cite{kodaira}
that the fundamental group $\Gamma$  of such a surface is a finite extension of the infinite
cyclic group ${\mathbb Z}$. The list of concrete realizations of $\Gamma$ as a group of automorphisms of
$\mathbb{C}^2$ can be found in \cite{kato-hopf},  and we summarize  this classification in the
following rough form:  $\Gamma = H \ltimes \langle \gamma_0 \rangle$, where $\langle \gamma_0\rangle$ denotes  the infinite cyclic group generated by the contraction $\gamma_0$ of $\mathbb{C}^2$
$$\gamma_0(z_1,z_2) = (\alpha z_1 + \lambda z_2^m, \beta z_2),$$
where  the complex numbers  $\alpha, \beta, \lambda$ satisfy $0<|\alpha| \le |\beta| < 1$, $\lambda(\alpha-\beta^m)=0$ for an integer $m \in \mathbb{N}^*$. Furthermore, it follows by the classification in \cite{kato-hopf} that  when $\lambda \neq 0$, $H \subset {\rm U}(1)\times {\rm U}(1)$ is abelian and commutes with $\gamma_0$.
We denote by $S_{\alpha, \beta, \lambda}= \mathbb{C}^2 \ \{(0, 0)\}/\langle \gamma_0 \rangle$ the corresponding {\it primary} Hopf surface and by $S_{\alpha, \beta, \lambda; H}$ the further quotient by $H$, called {\it secondary} Hopf surface. 

Hopf surfaces with $\lambda =0$ are called {\it diagonal} (or, confusingly, of class 1). Belgun has shown~ \cite[Thm.~1]{B} that  any such surface admits a Vaisman lcK metric. By Lemma~\ref{lcK-potential} and Proposition~\ref{p:trivial}, in this case we have $\mathcal{T}(S)= \mathcal{C}(S) = (-\infty, 0)$.

Hopf surfaces with $\lambda \neq 0$ are called {\it resonant} (also called, even more confusingly,  of class 0).  Let $S_{\beta^m, \beta, \lambda;H}$ be a resonant Hopf surface. The analytic family  $S_{\lambda}:= S_{\beta^m, \beta, \lambda;H}, \ \lambda \in \mathbb{C}$ has as central fibre the  diagonal Hopf surface $S_0= S_{\beta^m, \beta, 0; H}$, while for $\lambda \neq 0$ the surfaces $S_{\lambda}$ are isomorphic (see \cite{GO}). As $S_0$ admits a taming conformally symplectic form  with Lee form in any  $a\in (-\infty, 0)$ so does $S_{\lambda}$, just by continuity using that the taming condition is open. It follows that $\mathcal{T}(S_{\lambda})= (-\infty, 0)$.

Similarly, as $S_0$ admits a Vaisman metric by \cite{B,GO}, it has a Vaisman lcK metric $g_a$ with  fundamental $2$-form $F_a$ and Lee form in $a$ for any $a\in (-\infty, 0)$, by Lemma~\ref{lcK-potential}. (Recall that any  Vaisman lcK metric  is pluricanonical.) As the corresponding $H$ commutes with $\gamma_0 $ in this case, the potential $\tilde f$ for $F_{a}$ can be chosen to be $H$-invariant (by averaging over $H$). By the argument in \cite{GO} , the $\tilde f$ can be deformed to define an $H$-invariant potential for a lcK metric on $S_{\lambda}$,  for $\lambda$ small enough, with the same constant $c_{\gamma_0}$ (see Definition ~\ref{d:potential}). Using the isomorphism $\mathbb{C}^{*} =H^1(S, \mathbb{C}^*) \cong {\rm Pic}_0(S)$ established for class VII surfaces in \cite[I, p.~756]{kodaira},  this shows that the induced lcK metrics on $S_{\lambda}$ will have Lee forms in the same de Rham class $a$. We thus see that $\mathcal{C}(S_{\lambda}) =(-\infty, 0)$ too.

Using Proposition~\ref{p:blow-up} (and Lemma~\ref{l:blow-up} with $a=0$ and $k=1$), we conclude
\begin{prop}\label{p:hopf} For any  compact complex surface $S$ whose minimal model is a Hopf surface, $\mathcal{T}(S)=\mathcal{C}(S)=(-\infty, 0)$.
\end{prop}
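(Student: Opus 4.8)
The plan is to combine the results already established for Hopf surfaces as minimal models (namely $\mathcal{T}(S_0)=\mathcal{C}(S_0)=(-\infty,0)$, which was verified above in the diagonal and resonant cases) with the blow-up behaviour of the invariants $\mathcal{C}$ and $\mathcal{T}$. Thus, let $S$ be a compact complex surface whose minimal model $S_0$ is a Hopf surface, and write $S$ as an iterated blow-up $S=S_N \to S_{N-1} \to \cdots \to S_0$, where each map $B_k : S_k \to S_{k-1}$ contracts a $(-1)$-curve to a point. Since $b_1(S_0)=1$ and blowing up does not change $b_1$, we have $b_1(S)=1$, so the ordered identification $H^1_{dR}(S_k)\cong(\mathbb{R},>)$ of Definition~\ref{d:order} applies to each $S_k$, and one checks that the pull-back maps $B_k^*$ are compatible with these identifications: indeed by Lemma~\ref{l:blow-up} (with $a=0$, $k=1$) the induced map $B_k^* : H^1_{dR}(S_{k-1})\to H^1_{dR}(S_k)$ is an isomorphism, and since the degrees of the corresponding flat line bundles are computed by \eqref{e:degree} against Gauduchon metrics that push forward/pull back appropriately, $B_k^*$ preserves the sign of the degree, hence the order.

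Next I would run the induction. Assume $\mathcal{T}(S_{k-1})=\mathcal{C}(S_{k-1})=(-\infty,0)$. For the inclusion $\mathcal{C}(S_k)=(-\infty,0)$: by Proposition~\ref{p:blow-up}(b), $a\in\mathcal{C}(S_{k-1})$ iff $B_k^*(a)\in\mathcal{C}(S_k)$, and since $B_k^*$ is an order-preserving bijection $H^1_{dR}(S_{k-1})\to H^1_{dR}(S_k)$ identifying $(-\infty,0)$ with $(-\infty,0)$, this gives $\mathcal{C}(S_k)=(-\infty,0)$. For $\mathcal{T}$: Proposition~\ref{p:blow-up}(a) gives $B_k^*(\mathcal{T}(S_{k-1}))\subseteq\mathcal{T}(S_k)$, hence $(-\infty,0)\subseteq\mathcal{T}(S_k)$; and Proposition~\ref{p:trivial} gives the reverse inclusion $\mathcal{T}(S_k)\subseteq(-\infty,0)$. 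Therefore $\mathcal{T}(S_k)=\mathcal{C}(S_k)=(-\infty,0)$, and after $N$ steps we conclude $\mathcal{T}(S)=\mathcal{C}(S)=(-\infty,0)$. The base case $k=0$ is exactly the combination of the diagonal case (Belgun's Vaisman metric plus Lemma~\ref{lcK-potential} and Proposition~\ref{p:trivial}) and the resonant case (deformation argument from \cite{GO}) recorded just above.

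The one point requiring care is the compatibility of $B_k^*$ with the orderings on $H^1_{dR}$, i.e. that blowing up neither reverses nor collapses the sign of the degree; this is where one must be slightly attentive, using that the exceptional curve $E\cong\mathbb{C}P^1$ has $E\cdot E=-1$ and that $\tfrac{1}{2\pi}\rho_{\mathcal{L}_a}$, being pluriharmonic and supported in cohomology pulled back from $S_{k-1}$, pairs with $F$ exactly as downstairs (one may, as in the proof of Lemma~\ref{l:blow-up}, choose the representative $\alpha$ to vanish near the blow-up point, so $B_k^*\alpha$ vanishes near $E$ and the degree integral localizes away from $E$). Once this is in place, everything else is a formal consequence of Propositions~\ref{p:blow-up} and \ref{p:trivial} together with the already-established Hopf case, so the statement follows.
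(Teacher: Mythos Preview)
Your proposal is correct and follows essentially the same approach as the paper, which merely states ``Using Proposition~\ref{p:blow-up} (and Lemma~\ref{l:blow-up} with $a=0$ and $k=1$), we conclude'' after establishing the minimal case. You are more explicit than the paper in two places: you spell out that the reverse inclusion $\mathcal{T}(S_k)\subseteq(-\infty,0)$ comes from Proposition~\ref{p:trivial} (since Proposition~\ref{p:blow-up}(a) only gives one direction for $\mathcal{T}$), and you address the compatibility of $B_k^*$ with the degree-induced orderings on $H^1_{dR}$, both of which the paper leaves implicit.
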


\subsection{Inoue surfaces with $b_2=0$}~\label{s:inoue}  The  Inoue surfaces $S_0$ with second Betti number equal to zero are classified by Inoue \cite{inoue} into three types, $S_M, S^-_{N;p,q,r}$ and $S^+_{N;p,q,r; u}$,  where  the parameters $M, N$ are matrices with integer coefficients, $p,q,r$ are integers and $u$ is a complex number.  Any such surface is the quotient of $\mathbb{C} \times {\mathbb H}$ (where ${\mathbb H}$ denotes the upper half-plane in $\mathbb C$) under a discrete subgroup $\Gamma$  of the group $A(2, \mathbb{C})$ of affine trasformations of $\mathbb{C}^2$, leaving $\mathbb{C} \times {\mathbb H}$ invariant. The specific description of $\Gamma$ in each case is given in \cite{inoue}, but we shall not make use of this.  The  relevant information for the discussion below is the fact, shown by Tricerri in \cite{tricerri},   that all Inoue surfaces admit lcK metrics, except the surfaces of the type $S_u:=S^+_{N;p,q,r; u}$ for which the complex parameter $u$ is not real. In the later case, Belgun~\cite{B} shows that there are no lcK metrics at all.  Nevertheless,  we proved in Theorem~\ref{thm:lcs} that  $S_u$ always admits timing locally conformally symplectic structures.

Let us now consider in a little more detail the analytic family $S_u, u \in \mathbb{C}\setminus \mathbb{R}$ of Inoue surfaces of the third type.  We shall show, by using an argument from \cite{B},  that in this case $\mathcal{T}(S_u)$ is a single point. 

It is known \cite[p.~35]{B}, \cite[Thm.~1]{H}, \cite{wall} that  $S_u = (\mathbb{C}\times \mathbb{H})/\Gamma_u$ where $\Gamma_u$ is a lattice in the solvable Lie group
$${\rm Sol}_1'^4 =\left\{\left( \begin{array}{ccc} 1 & a &  b + i \log \gamma \\ 0 & \gamma & c \\ 0 & 0 & 1 \end{array} \right ), \ \gamma >0, a,b,c \in \mathbb R\right\} \subset {\rm GL}(3, \mathbb{C}).$$
The group ${\rm Sol}_1'^4$  acts itself simply-transitively (and holomorphically) on $\mathbb{C}\times \mathbb{H}$. This allows to identify  $S_u$
with  the quotient  ${\rm Sol}_1'^4$,  endowed with a (fixed)  left-invariant integrable almost complex structure $J$,  by  the left action of the lattice $\Gamma_u$. In explicit terms,  let
$$Y=\left( \begin{array}{ccc} 0 & 1 &  0 \\ 0 & 0 & 0 \\ 0 & 0 & 0 \end{array} \right )  Z=\left( \begin{array}{ccc} 0 & 0 &  1\\ 0 & 0 & 0 \\ 0 & 0 & 0 \end{array} \right ) T=\left( \begin{array}{ccc} 0 & 0 &  i \\ 0 & 1 & 0 \\ 0 & 0 & 0 \end{array} \right ) U=\left( \begin{array}{ccc} 0 & 0 &  0 \\ 0 & 0 & 1 \\ 0 & 0 & 0 \end{array} \right )$$
be the generators of the Lie algebra of ${\rm Sol}_1'^4$ with
$$Z \ {\rm central}, [Y, T]=Y, [T, U]=U, [Y, U]=Z$$
and denote with the same letters the induced left-invariant vector fields  on ${\rm Sol}_1'^4$. Then the left-invariant complex structure $J$ on ${\rm Sol}_1'^4$ is given by \cite[(35)]{B}
$$J Y = -Z, JZ= Y, JT=-U-Z,  JU= T-Y.$$
Furthermore, ${\rm Sol}_1'^4$ admits an ad-invariant  $1$-form $\alpha_0$, defined by $\alpha_0(T)=1, \alpha_0(U)=\alpha_0(Y)=\alpha_0(Z)=0$, which descends to define a closed but not exact $1$-form (still denoted $\alpha_0$) on  $S_u$. As $b_1(S_{u})=1$, by Remark~\ref{r:conformal}  we can assume without loss that $S_{u}$ admits a locally conformally symplectic $2$-form $F$ which tames $J$,  and whose Lee form equals $k\alpha_0$ for a non-zero real constant $k$. As ${\rm Sol}_1'^4$ has a bi-invariant volume form $v$~\cite[Lemma~4]{G} (which defines a volume form on the quotient $S_u$, still denoted by $v$), for any left-invariant vector fields $U$ and $V$ on  ${\rm Sol}_1'^4$ (which define vector fields on the quotient $S_u$, still denoted by $U$ and $V$), one can consider the average of $F$ over $S_{u}$:
$$F_0(U, V) := \int_{S_u} F(U, V) v.$$
It  can be shown,  as in the proof of \cite[Thm.~7]{B},  that $F_0$ defines  a {\it left-invariant} $2$-form  on $({\rm Sol}_1'^4,J)$ which tames $J$ and satisfies $dF_0 = k\alpha_0 \wedge F_0$. Evaluating the later equality over $Y,Z,T$ yields (see also \cite[(36)]{B}) $F_0(Y,Z)=kF(Y,Z)$; as $Y=JZ$ and   the $(1,1)$-part of $F_0$ is positive definite,  it follows that $k=1$.  This shows that $\mathcal{T}(S_u)=\{a_0\}$ with $a_0=[\alpha_0]$.  It is easy to check (see  \cite[Sect.~7.1]{G}), using the explicit description of $S_{u}$ of \cite{inoue} and the isomorphism   $\mathbb{C}^{*} =H^1(S, \mathbb{C}^*) \cong {\rm Pic}_0(S)$ of \cite[I, p.~756]{kodaira},  that the corresponding holomorphic line bundle $\mathcal{L}_{a_0}$ is isomorphic to the anticanonical line bundle $\mathcal{K}^*_{S_u}$.

Noting finally that the existence of a locally conformally symplectic structure taming $J$ is an open condition (with respect to $J$) and that the Inoue surfaces of the type $S^-_{N, p, q, r, s}$ are quotients of Inoue surfaces of the type $S^+_{N, p,q,r; 0}$ by an involution~\cite{inoue}, we obtain the following
\begin{prop}\label{p:inoue} Let $S$ be an Inoue surface with $b_2(S)=0$ in one of the types $S^+_{N, p,q,r; u}$ or $S^-_{N; p,q,r}$. Then $\mathcal{C}(S)$ and $\mathcal{T}(S)$ are given as follows:
\begin{enumerate}
\item[$\bullet$] $\mathcal{C}(S)=\mathcal{T}(S)=\{ a_0\}$ iff $S$ is of the type $S^-_{N; p,q,r}$ or $S^+_{N, p,q,r; u}$ with $u\in {\mathbb R}$;
\item[$\bullet$] $\mathcal{C}(S)=\emptyset, \ \mathcal{T}(S)=\{a_0\}$ iff $S$ is of the type $S^+_{N, p,q,r; u}$ with $u\in \mathbb{C}\setminus {\mathbb R}$,
\end{enumerate}
where $\mathcal{L}_{a_0} = \mathcal{K}^*_{S_{u}}$.
\end{prop}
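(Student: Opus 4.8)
The plan is to treat the two cases of Proposition~\ref{p:inoue} by combining the analysis of the family $S_u$ already carried out in the text with the blow-down / blow-up compatibility and the classical results of Tricerri and Belgun. For the type $S^+_{N,p,q,r;u}$, the computation preceding the Proposition shows that $\mathcal{T}(S_u)=\{a_0\}$ for \emph{every} $u\in\mathbb{C}$ (the averaging argument over the left-invariant structure of ${\rm Sol}_1'^4$ never used the reality of $u$), and that $\mathcal{L}_{a_0}\cong\mathcal{K}^*_{S_u}$. So the only remaining point is to decide when $\mathcal{C}(S_u)$ is non-empty: Belgun~\cite{B} shows that for $u\in\mathbb{C}\setminus\mathbb{R}$ there is no lcK metric at all, giving $\mathcal{C}(S_u)=\emptyset$; for $u\in\mathbb{R}$, Tricerri~\cite{tricerri} constructs an lcK metric, so $\mathcal{C}(S_u)\neq\emptyset$, and since $\mathcal{C}(S)\subseteq\mathcal{T}(S)=\{a_0\}$ we must have $\mathcal{C}(S_u)=\{a_0\}$.

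For the type $S^-_{N;p,q,r}$, I would use the remark stated just before the Proposition: such a surface is a quotient of some $S^+_{N,p,q,r;0}$ by a fixed-point-free holomorphic involution $\iota$. First, pulling back via the covering $S^+_{N,p,q,r;0}\to S^-_{N;p,q,r}$ and averaging an invariant lc-symplectic (resp. lcK) structure over the two sheets, one gets $\mathcal{T}(S^-)\subseteq\mathcal{T}(S^+_{\cdot;0})=\{a_0\}$ under the induced injection on $H^1_{dR}$ (which is an isomorphism since $b_1=1$ for both), and likewise for $\mathcal{C}$. Conversely, Tricerri's lcK metric on $S^-$ (or on $S^+_{\cdot;0}$ descended, after averaging, to $S^-$) shows $\mathcal{C}(S^-)\neq\emptyset$, hence $\mathcal{C}(S^-)=\mathcal{T}(S^-)=\{a_0\}$. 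Identifying $\mathcal{L}_{a_0}$ with $\mathcal{K}^*_{S^-}$ follows because the anticanonical bundle is natural under the étale quotient and the identification $\mathcal{L}_{a_0}\cong\mathcal{K}^*$ was already checked upstairs via the isomorphism $H^1(S,\mathbb{C}^*)\cong{\rm Pic}_0(S)$ of~\cite{kodaira}.

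Finally I would package the two bullets: $\mathcal{C}(S)=\mathcal{T}(S)=\{a_0\}$ exactly for $S^-_{N;p,q,r}$ and for $S^+_{N,p,q,r;u}$ with $u\in\mathbb{R}$, while $\mathcal{C}(S)=\emptyset$, $\mathcal{T}(S)=\{a_0\}$ for $S^+_{N,p,q,r;u}$ with $u\notin\mathbb{R}$; in all cases $\mathcal{L}_{a_0}=\mathcal{K}^*_S$. The openness of the taming condition with respect to $J$, invoked in the text, is what guarantees $\mathcal{T}$ is non-empty for the non-real $u$ (by deforming from $u=0$), but since we already know $\mathcal{T}(S_u)=\{a_0\}$ directly from the averaging argument, this is only a consistency check.

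I expect the main obstacle to be the bookkeeping in the $S^-$ case: one must verify that the averaging construction genuinely produces an $\iota$-invariant structure with the \emph{same} Lee class (not merely a Lee class mapping to $a_0$), and that the covering map induces the expected isomorphism $H^1_{dR}(S^-)\to H^1_{dR}(S^+_{\cdot;0})$ compatibly with the identifications $\mathcal{L}_{a_0}\cong\mathcal{K}^*$ on both surfaces. Everything else reduces to quoting \cite{tricerri}, \cite{B}, \cite{inoue}, \cite{kodaira}, the inclusion $\mathcal{C}(S)\subseteq\mathcal{T}(S)$, and the computation already displayed before the statement.
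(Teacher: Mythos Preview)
Your proposal is correct and reproduces the paper's argument with one minor variation. For $S^+_{N,p,q,r;u}$ with $u\in\mathbb{R}$ you extend the averaging argument over ${\rm Sol}_1'^4$ directly, correctly noting that the Lie-algebraic computation forcing $k=1$ is independent of the lattice $\Gamma_u$; the paper instead uses openness of the taming condition to transport the constraint $\mathcal{T}(S_u)\subseteq\{a_0\}$ from non-real $u$ to real $u$ (if $b\in\mathcal{T}(S_{u_0})$ with $u_0$ real, the same $2$-form tames $J_u$ for nearby non-real $u$, so $b\in\mathcal{T}(S_u)=\{a_0\}$). Your reading of the paper's openness remark is thus inverted: it is not used to manufacture taming forms for non-real $u$ (Theorem~\ref{thm:lcs} already does that), but to pin down $\mathcal{T}$ at real $u$. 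Both routes are valid; yours is marginally more direct, the paper's avoids re-checking that the solvmanifold description holds at real $u$. For $S^-$ your double-cover argument is exactly what the paper's quotient remark is pointing to, and the bookkeeping you flag is routine once you note that $\pi^*:H^1_{dR}(S^-)\to H^1_{dR}(S^+_{\cdot;0})$ is injective (hence an isomorphism, both spaces being one-dimensional). One small over-reach: the identification $\mathcal{L}_{a_0}\cong\mathcal{K}^*$ is only asserted in the proposition for the surfaces $S_u$ of type $S^+$, so there is no need to chase it through the \'etale quotient to $S^-$.
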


\subsection{Kato surfaces}\label{s:kato} These are minimal complex surfaces in the Kodaira class ${\rm VII}$ whose second Betti number is  strictly greater than $0$, and 
which have a {\it global spherical shell} (GSS). Conjecturally, any minimal surface in the class VII should be either a Hopf surface, an Inoue surface,  or a Kato surface. However, this conjecture is still far from being solved. 

For Kato complex surfaces, Brunella~\cite{B1,B2} has shown that $\mathcal{C}(S) \neq \emptyset$ and that $\mathcal{C}(S)$ has $-\infty$ as an accumulation point. Note that any Kato surface  $S$ is diffeomorphic to $(S^1\times S^3)\sharp k \overline{{\mathbb C}P^2}$ (where $k:=b_2(S)$), see e.g.~\cite{Nakamura}.  As $S^1\times S^3$,  with a complex structure of a Hopf surface of class 1,  admits a Vaisman metric,  it follows by \cite{leon,OV2} that $H^3_{d_{L}}(S^1\times S^3, L)=\{0\}$ for any non-trivial flat real line bundle $L$.  By Lemma~\ref{l:blow-up}, $H_{d_{L}}^3(S,L)=\{0\}$ and,  therefore, by  Remark~\ref{r:goto}, $\mathcal{T}(S)$ must be an open subset of $(-\infty, 0)$. Similar conclusion holds true for $\mathcal{C}(S)\subseteq \mathcal{T}(S)$, by  \cite{G} and Lemma~\ref{vanishing}, together with the vanishing of $H^2(S, \mathcal L)$ and $H^0(S, \mathcal{L})$  established in Lemma~\ref{l:vanishing2} and  Lemmas~\ref{from symplectic to SKT} and \ref{vanish},  respectively.

Further progress in this case seems to depend on a better understanding of the subset $\mathcal{G}(S)$, as the following result suggests.
\begin{prop} Let $S$ be a Kato surface. Then,
$$\mathcal{G}(S) \subseteq \mathcal{T}(S).$$
\end{prop}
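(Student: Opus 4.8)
The plan is to show that for a Kato surface $S$, any class $c \in \mathcal{G}(S)$ lies in $\mathcal{T}(S)$ by combining the analytic machinery of Section~\ref{s:lichnerowicz-novikov} with the vanishing results already established for Kato surfaces in the preceding discussion. First I would fix $c \in \mathcal{G}(S)$ and a Gauduchon metric $\tilde g$ whose harmonic part of the Lee form represents $c$. By Proposition~\ref{p:trivial}, we already know $c \in (-\infty, 0)$, so $c \neq 0$ and the associated flat holomorphic line bundle $\mathcal{L}_c$ is non-trivial; moreover, by Lemma~\ref{l:vanishing2} we have $H^2(S, \mathcal{L}_c) = \{0\}$ since a Kato surface has $b_1 = 1$, negative Kodaira dimension and $b_2(S_0) = b_2(S) > 0$. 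Thus the obstruction appearing in Proposition~\ref{from SKT to symplectic} vanishes, and it suffices to produce a Gauduchon metric $g'$ on $S$ whose fundamental $2$-form satisfies $d_{\alpha'} d^c_{\alpha'} F' = 0$ for some closed $1$-form $\alpha'$ with $[\alpha'] = c$.

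The existence of such a Gauduchon metric is exactly the content of Proposition~\ref{p:twisted-gauduchon}(i), which (by Proposition~\ref{p:variational}) reduces to showing $\lambda_c(g') = 0$ for some Gauduchon metric $g'$. Here I would mimic the proof of Theorem~\ref{thm:conformal} and the argument in Proposition~\ref{p:non-trivial}. On one hand, taking the metric $\tilde g$ for which $c = [\tilde\theta_h^{\tilde g}]$, the computation \eqref{lambda(t)} (with $t = 1$, i.e. $a = \frac{1}{2}[\theta_h^{\tilde g}]$ replaced by the full class — one must be careful with the scaling, but the same integration-by-parts identity of Lemma~\ref{l:integration-by-parts} applies) shows that $\int_M \tilde g(\tilde\theta^{\tilde g} - c_h^{\tilde g}, c_h^{\tilde g}) v_{\tilde g} = 0$, whence $\lambda_c(\tilde g) \le 0$, with equality forced. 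Actually the cleanest route: for the Gauduchon metric $\tilde g$ realizing $c$ as the harmonic Lee class, one directly has $\theta^{\tilde g}_h = c_h^{\tilde g}$, so in the notation of \eqref{L} the zeroth-order coefficient $g(\theta^g - a_h^g, a_h^g)$ vanishes identically, and by Lemma~\ref{l:integration-by-parts} applied to $\psi \equiv 1$ we get $\int_M \mathbb{L}_{\tilde g, c}(1)/1 \cdot v_{\tilde g} = -\int_M \|c_h^{\tilde g}\|^2 v_{\tilde g} < 0$. Combined with $\lambda_c(\tilde g) \ge \min_M \mathbb{L}_{\tilde g,c}(1)$ this does not immediately give a sign; so instead one uses the full variational characterization: since the constant function is an admissible competitor and the relevant integral is negative, $\lambda_c(\tilde g) \le 0$ cannot be concluded pointwise but follows from averaging as in the proof of Theorem~\ref{thm:conformal}. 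To then get $\lambda_c(g') = 0$ for some $g'$, I would deform $\tilde g$ along a linear path of Gauduchon metrics toward a metric where $\lambda_c$ is nonnegative — for instance toward the lcK-type situation or simply observing, via the analyticity in Proposition~\ref{p:variational}, that $\lambda_c$ changes sign and hence vanishes somewhere.

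Alternatively, and perhaps more robustly, I would appeal directly to Proposition~\ref{p:non-trivial}: since Brunella's results give $\mathcal{C}(S) \neq \emptyset$ with $-\infty$ as an accumulation point, for any $c \in \mathcal{G}(S) \subset (-\infty, 0)$ we may pick $b \in \mathcal{C}(S)$ with $b \le c$ (possible precisely because $\mathcal{C}(S)$ accumulates at $-\infty$). Then $b \le c$ with $b \in \mathcal{C}(S)$ and $c \in \mathcal{G}(S)$, and the flat bundle $\mathcal{L}_a = \mathcal{L}_c$ satisfies $H^2(S, \mathcal{L}_c) = \{0\}$ by Lemma~\ref{l:vanishing2}; Proposition~\ref{p:non-trivial} then yields $c \in \mathcal{T}(S)$ directly. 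This is the argument I would write up, as it avoids reproving the PDE continuity step.

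\begin{prop} Let $S$ be a Kato surface. Then,
$$\mathcal{G}(S) \subseteq \mathcal{T}(S).$$
\end{prop}
\begin{proof}
Let $c \in \mathcal{G}(S)$. By Proposition~\ref{p:trivial}, $\mathcal{G}(S) \subseteq (-\infty, 0)$, so in particular $c < 0$ and $c \neq 0$, hence the flat holomorphic line bundle $\mathcal{L}_c$ is non-trivial. Since $S$ is a Kato surface it has $b_1(S) = 1$, negative Kodaira dimension, and its minimal model $S_0 = S$ satisfies $b_2(S_0) = b_2(S) > 0$; therefore Lemma~\ref{l:vanishing2} applies and gives $H^2(S, \mathcal{L}_c) = \{0\}$ (indeed $H^2(S, \mathcal{L}) = \{0\}$ for every $\mathcal{L} \in {\rm Pic}_0(S)$).

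By Brunella's results~\cite{B1, B2} recalled above, $\mathcal{C}(S) \neq \emptyset$ and $-\infty$ is an accumulation point of $\mathcal{C}(S) \subseteq (-\infty, 0)$. In particular there exists $b \in \mathcal{C}(S)$ with $b \le c$. We now apply Proposition~\ref{p:non-trivial} with this $b \in \mathcal{C}(S)$, the given $c \in \mathcal{G}(S)$, and $a = c \in [b, c]$: since $H^2(S, \mathcal{L}_c) = \{0\}$, we conclude $c \in \mathcal{T}(S)$.

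As $c \in \mathcal{G}(S)$ was arbitrary, this shows $\mathcal{G}(S) \subseteq \mathcal{T}(S)$.
\end{proof}
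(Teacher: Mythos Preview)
Your written-up proof is correct and follows essentially the same route as the paper: pick $c\in\mathcal{G}(S)$, use Brunella's results to find $b\in\mathcal{C}(S)$ with $b\le c$, then apply Proposition~\ref{p:non-trivial} together with the vanishing $H^2(S,\mathcal{L}_c)=\{0\}$ from Lemma~\ref{l:vanishing2}. The paper's version is slightly terser (citing \cite[Rem.~9]{B1} directly for the existence of such a $b$), but the argument is the same.
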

\begin{proof} Let $c \in \mathcal{G}(S)$. Brunella shows (see \cite[Rem.~9]{B1}) that there exists a lcK metric whose Lee form defines a de Rham class  $b$ with $b<c$. By Proposition~\ref{p:non-trivial} and Lemma~\ref{l:vanishing2}, any $a\in [b,c]$ (in particular $a=c$) belongs to $\mathcal{T}(S)$. \end{proof}

\appendix
\section{A Perron-type Theorem  for second-order strongly elliptic linear operators on  a compact Riemannian manifold}\label{a:PDE}
We review here some spectral properties of the second order strongly elliptic linear operators in the form
$$L(u)= \Delta^g u + g(\alpha, du) + c u,$$
where $g$ is a Riemannian metric on a compact manifold $M$, $\Delta^g= \delta^g d $ is the corresponding Riemannian Laplacian (which we shall consequently denote by $\Delta$), $\alpha$ is a given smooth $1$-form and $c$ a given smooth function on $M$.  Note that $L$ need not to be self-adjoint in general, a case where the spectral theory is well-established.

In a local chart $U$, $L$ takes the form
$$L_U=-\sum_{i,j=1}^n a^{ij}_U(x)\frac{\part^2}{\part x_i\part x_j} + \sum_{k=1}b^k_U(x)\frac{\part}{\part x_k} +c_U(x),$$
where $a_U^{ij}, b_U^k,c_U$ are smooth real functions such that the symmetric matrix $(a_U^{ij})$ is uniformly positive definite on $U$.  In this case, the Perron-type Theorem established in  \cite[p.~360]{evans} and \cite{LeiNi} states that if $\Omega \subset U$ is compactly supported domain with smooth boundary, the operator $L_{U}$ taken on smooth functions on $\bar \Omega$ with zero boundary value has {\it real} eigenvalue $\lambda_0$ (called {\it principal eigenvalue}) such that for any other eigenvalue $\lambda$, ${\rm Re}(\lambda) \ge \lambda_0$; furthermore $\lambda_0$ is simple and the corresponding eigenspace is generated by a nowhere vanishing  smooth function $u_0$ on $\Omega$. 

Recall that for any strongly elliptic linear operator $L : \mathcal{C}^{\infty}(M) \to \mathcal{C}^{\infty}(M)$, the set of (complex) eigenvalues is discrete,  having a limit point only at infinity, see  e.g.~\cite[p.~465]{Besse} or \cite[p.~126]{aubin}. We want to establish  the following adaptation of the Perron-type Theorem  mentioned above  to the case of a compact manifold without boundary (as we failed to find a reference to this result in the literature).

\begin{thm}\label{A:1} Let $L(u)=\Delta u + g(\alpha, du) + c u$ be a linear strongly elliptic linear differential operator of order 2  on a compact Riemannian manifold  $(M,g)$. Then, there exists a real eigenvalue $\l_0$ for $L$ which admits a smooth everywhere positive eigenfunction $u_0$. Furthermore, $\lambda_0$ satisfies the following properties:
\begin{enumerate}
\item[(i)]  $\lambda_0$ is of multiplicity one.
\item[(ii)] If  $\l$ is another eigenvalue, then ${\rm Re}(\l)>\l_0$.
\item[(iii)] $\l_0=\sup_{u\in\cal A}\Big(\inf_{x\in M}\frac{L(u)}{u}\Big)$, where $\cal A=\{u\in\cal C^\infty(M)\mid u>0 \}$.
\end{enumerate}
\end{thm}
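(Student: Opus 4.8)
The plan is to reduce the compact case to the known Perron--Frobenius theory for Dirichlet problems on bounded domains, quoted from \cite{evans, LeiNi}, by a covering-and-gluing argument, and then to run an eigenvalue-shift trick together with the strong maximum principle to upgrade ``positive subsolution'' to ``positive eigenfunction.''  The first maneuver I would make is a normalization: replacing $L$ by $L-\mu\,\mathrm{Id}$ for a large constant $\mu$ shifts every eigenvalue by $-\mu$ and does not change eigenfunctions, so without loss of generality I may assume $c<0$ everywhere; this makes the zeroth-order term have a definite sign, which is exactly what is needed to invoke the maximum principle in the form that forbids nonnegative interior maxima of a subsolution.  I would then let $L^{*}$ denote the formal $L^{2}(M,v_g)$-adjoint, which is again a strongly elliptic second-order operator of the same shape; by the general theory (discreteness of the spectrum on a compact manifold, \cite{Besse, aubin}) both $L$ and $L^{*}$ have discrete spectrum, and $\mathrm{spec}(L)=\overline{\mathrm{spec}(L^{*})}$.

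The technical core is the existence of a positive eigenfunction.  Here I would argue as follows.  Cover $M$ by finitely many charts $U_1,\dots,U_N$; in each one quote the cited Dirichlet Perron theorem to get, for a slightly smaller domain $\Omega_i\Subset U_i$, a principal eigenvalue $\lambda_i$ with a positive eigenfunction $w_i$ on $\Omega_i$ vanishing on $\partial\Omega_i$.  The point of the chart-by-chart statement is a monotonicity property of the principal Dirichlet eigenvalue under enlargement of the domain, which lets one compare and, by a standard barrier/comparison argument, produce on all of $M$ a genuine eigenpair.  The cleanest packaging, which I would use, goes through the resolvent: for $\mu\gg0$ the operator $(\mu\,\mathrm{Id}-L)^{-1}:C(M)\to C(M)$ exists (elliptic regularity plus the fact that, with $c<0$, the maximum principle gives uniqueness hence solvability) and is a \emph{positive} compact operator on the Banach lattice $C(M)$, and it is irreducible because $M$ is connected and $L$ is elliptic (a nonnegative nonzero solution of $(\mu-L)u\ge 0$ is strictly positive, by Hopf's strong maximum principle).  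The Krein--Rutman theorem then yields a simple leading eigenvalue $r>0$ of the resolvent with a strictly positive eigenfunction $u_0$, and $\lambda_0:=\mu-1/r$ is the desired eigenvalue of $L$: it is real, $u_0>0$, it is simple (that is (i)), and $r$ being the spectral radius gives $\mathrm{Re}(\lambda)\ge\lambda_0$ for every other eigenvalue $\lambda$.  To get the strict inequality in (ii), note that equality $\mathrm{Re}(\lambda)=\lambda_0$ with $\lambda\ne\lambda_0$ would force $L^{*}$ to have a leading eigenvalue of multiplicity $>1$ or a non-simple one on the boundary of the spectral disc, contradicting the Krein--Rutman statement applied to $L^{*}$ (whose resolvent is likewise positive irreducible compact); more directly, if $Lv=\lambda v$ with $\mathrm{Re}\lambda=\lambda_0$, writing $v=u_0\varphi$ and using the eigenfunction $\phi_0>0$ of $L^{*}$ one integrates $\phi_0 u_0(L-\lambda_0)$ against a test computation to conclude $\varphi$ is constant, forcing $\lambda=\lambda_0$.

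Finally I would prove the variational formula (iii).  For the inequality $\lambda_0\ge\sup_{u\in\mathcal A}\inf_M\frac{L(u)}{u}$: if $u\in\mathcal A$ and $\inf_M\frac{L(u)}{u}=:t$, then $(L-t\,\mathrm{Id})u\ge 0$ with $u>0$, so pairing against the positive eigenfunction $\phi_0$ of $L^{*}-\lambda_0\,\mathrm{Id}$ gives $0\le\int_M \phi_0\,(L-t)u\,v_g=(\lambda_0-t)\int_M\phi_0 u\,v_g$, whence $t\le\lambda_0$.  For the reverse inequality the supremum is attained: the choice $u=u_0$ gives $\inf_M\frac{L(u_0)}{u_0}=\lambda_0$ exactly, since $L(u_0)=\lambda_0 u_0$.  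I expect the main obstacle to be purely expository --- assembling the ``right'' functional-analytic packaging (Krein--Rutman on $C(M)$ versus a direct barrier argument from the Dirichlet case) and checking irreducibility/strict positivity of the resolvent via the strong maximum principle in the non-self-adjoint setting, since $L$ is not symmetric and one cannot lean on Rayleigh quotients; every individual ingredient is standard, but making the reduction to \cite{evans, LeiNi} airtight on a manifold without boundary is where the real care is needed.
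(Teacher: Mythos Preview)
Your overall strategy---Krein--Rutman applied to the resolvent as a strictly positive compact operator on $C(M)$---is a legitimate alternative to the paper's route, which instead constructs the positive eigenfunction by applying Schaefer's fixed-point theorem to $L^{-1}$ acting on a cone in a Sobolev space, and then handles (i)--(iii) by direct maximum-principle computations.  For existence and simplicity the Krein--Rutman packaging is arguably cleaner, and your proof of (iii) via the adjoint eigenfunction $\phi_0$ is essentially the paper's argument.  The chart-by-chart Dirichlet discussion at the start is a red herring and is not needed once you pass to the resolvent.

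There are, however, two genuine problems.  The first is a sign: with $\Delta=\delta^g d$ the nonnegative Laplacian, the spectrum of $L$ accumulates at $+\infty$, so $(\mu I-L)^{-1}$ for $\mu\gg 0$ is the wrong resolvent---it is not a positive operator (indeed it need not exist for $\mu$ near large eigenvalues).  You want $(L+\mu I)^{-1}$ for $\mu\gg 0$, equivalently the paper's $L^{-1}$ after shifting so that $c\ge 0$; then the weak and strong maximum principles do give positivity and irreducibility.

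The second is a real gap in (ii).  Krein--Rutman tells you that the spectral radius $r=(\lambda_0+\mu)^{-1}$ of $(L+\mu)^{-1}$ is the unique eigenvalue of that modulus, hence $|\lambda+\mu|>\lambda_0+\mu$ for every other eigenvalue $\lambda$ of $L$.  Letting $\mu\to+\infty$ this yields only ${\rm Re}(\lambda)\ge\lambda_0$, never the strict inequality: if ${\rm Re}(\lambda)=\lambda_0$ and ${\rm Im}(\lambda)\neq 0$, the disc inequality is satisfied for every $\mu$.  Your ``more direct'' argument does not close this either: pairing $Lv=\lambda v$ against $\phi_0$ only gives $(\lambda-\lambda_0)\int_M\phi_0 v\,v_g=0$, which for complex $v$ does not force $v$ proportional to $u_0$.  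The paper supplies the missing step by a pointwise computation: setting $v=u/u_0$ and $K(w):=\Delta w+g(\alpha-2\,d\log u_0,\,dw)$ (an operator with \emph{no} zero-order term), one has
\[
K(|v|^2)=2\big({\rm Re}(\lambda)-\lambda_0\big)|v|^2-2\,g(dv,d\bar v),
\]
so if ${\rm Re}(\lambda)\le\lambda_0$ the Hopf maximum principle forces $|v|$ constant, and feeding this back gives $dv=0$, hence $\lambda=\lambda_0$.  You need this (or an equivalent device) to finish; Krein--Rutman alone will not do it.
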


\begin{defn} The eigenvalue $\l_0$ is called the {\it principal eigenvalue} of $L$.
\end{defn}

\vspace{0.2cm}
\noindent
{\bf Proof of Theorem~\ref{A:1}.}  The proof will be decided into four steps,  corresponding to the statements in the Theorem~\ref{A:1} as follows.

\vspace{0.2cm}
\noindent
{\bf Step 1.} We shall establish in this Step the existence  of a real eigenvalue $\lambda_0$  corresponding to an everywhere positive smooth eigenfunction $u_0$. To this end, we shall work with the Sobolov spaces $W^2_k(M)$  corresponding to the norm 
$$\|f\|_{k}: =\left[\sum_{0\le j\le k} \int_M|D^jf|^2 v_g\right]^{1/2},$$
where $|D^jf|$ is the point-wise norm of the $j$-th covariant derivative and $v_g$ is the Riemannian volume form.  In what follows, we shall choose $k$ sufficiently large so that we have a  continuous embedding  $W^2_{k}(M)\subset \cal C^2(M)$. Because $M$ is compact,  we also have the continuous embeddings
$$W^2_k(M)\subset W^2_l(M), $$
for any integers $0\le l \le k$.

We shall assume (without loss) in what follows that the smooth function $c\ge 0$ (otherwise we consider the operator $L - ({\rm inf}_M c) {\rm Id}$ instead of $L$). Then, by the maximum principle~\cite[III, Sect.~8, 3.71]{aubin} (which in the sequel we shall always apply to $-L$ and $-u$),  $0$ is not an eigenvalue of $L$, i.e. ${\rm Ker} L =\{0\}$.  It is a standard fact that $L:W^2_{k+2}(M)\to W^2_k(M)$ is then invertible,  and $L^{-1}:W^2_k(M)\to W^2_{k+2}(M)$ is bounded. Indeed, as $L = \Delta + T$ with $\Delta$ being the self-adjoint Riemannian Laplacian and $T$ of order $\le 1$, the composition $T : W^2_{k+2}(M)\to W^2_{k+1}(M)\subset W^2_k(M)$ is a compact operator by the Rellich--Kondrachov theorem  (see \cite[p.~458]{Besse})); it follows that  ${\rm Index}(L)={\rm Index}(\D)=0$ by \cite[Cor.~19.1.8]{hormander} and,  as ${\rm Ker} \ L=\{0\}$,     $L$ is invertible and  bounded (by the standard $L^p$-estimates, see e.g. \cite[p. 463]{Besse}). Using the compactness  of $W^2_{k+1}(M)\subset W^2_k(M)$ again, we conclude that the composition 
$$A : W^2_k(M)\stackrel{L^{-1}}{\longrightarrow}W^2_{k+2}\subset W^2_k(M)$$ is  a compact operator.  
We define the cone
\begin{equation}\label{cone}
C:=\{u\in W^2_k(M)\mid u\ge 0\}.
\end{equation}
By the maximum principle~\cite[III, Sect.~8, 3.71]{aubin}  again, we have that  $A(C)\subset C$ and, if $w\in C$ with $w\not\equiv 0$,  then $Aw >0$. Using standard elliptic regularity  (see e.g. \cite[p. 467]{Besse}) it will be  enough to show that $A$ has  a non-trivial eigenfunction  $w_0\in C$,  corresponding to a real positive eigenvalue $1/\lambda_0$ (we will have then that $u_0:=\lambda_0 A(w_0)$ is a smooth strictly positive eigenfunction of $L$,  
 corresponding to the real positive eigenvalue $\lambda_0$).

Let us fix a function $w\in C$, $w\not\equiv  0$,  and let $v:=Aw$. By continuity, there exists $\mu>0$ such that
$$\mu v\ge w$$  on $M$. We claim that  if the equation
$$u=\lambda A(u+\e w)$$
with  $\e>0$ and $\lambda>0$  have  an everywhere positive solution $u$, then necessarily 
\begin{equation}\label{inequality}
\lambda\le \mu.
\end{equation}
Indeed, 
$$u=\lambda A(u)+\lambda A(\e w)> \lambda A(\e w)=\lambda\e v\ge \Big(\frac{\lambda}{\mu}\Big) \e w$$
hence
$$u\ge \lambda A(u)\ge \lambda A\left(\frac{\lambda\e}{\mu}w\right)=\frac{\lambda^2}{\mu} \e v \ge \Big(\frac{\lambda}{\mu}\Big)^2\e w.$$
By induction,  for $k\ge 1$,
$$u\ge \left(\frac{\lambda}{\mu}\right)^k \e w,$$
which is possible only  if $\lambda\le \mu$.

\vspace{0.2cm}
We are now going to show that  for any $\epsilon >0$, the closed subset of $W^2_k(M)$ 
$$S_\e:=\{ u\in C\mid \exists \ \lambda, 0\le \lambda\le 2\mu, u=\lambda A(u+\e w)\}$$
is unbounded. To this end, we shall use \eqref{inequality} in conjunction with  the well-known Schaefer theorem (see \cite[p.~540]{evans}), which states that 
if $\tilde A : W \to W$ is  a continuous compact (not necessarily linear) mapping of a Banach space $W$ and  $C\subset W$ is a convex subset stable by $\tilde A$, then $\tilde A$ has a fixed point in $C$ provided  that the subset $$\{u\in C\mid \exists \ \l, \ 0\le \l\le 1\ {\rm such\ that\ } u=\l \tilde A (u)\}$$ is bounded. 

In our case, $W= W^2_k(M)$, 
$$\tilde A (u) := 2\mu\bigl(A(u)+\e v\bigr)$$
which is continuous compact  because $A$ is, and $C\subset W^2_k(M)$ is the convex subset introduced in \eqref{cone}. Furthermore, if $S_\e$ were bounded, $\tilde A$ would satisfy  the hypothesis of the Schaefer theorem with respect to $C$, so $\tilde A$ would have a fixed point $u$ in $C$, i.e. $u={\tilde A} u=2\mu A(u+\e w)$,  which contradicts \eqref{inequality}.

Since $S_\e$ is not bounded, there exists $u_\e \in S_\e$  with $\|u_\e\|_k\ge \frac{1}{\e}$. Denote by $\lambda_\e$ ($0\le\lambda_\e\le 2\mu$) the corresponding real number  such that
$$u_\e = \lambda_\e A(u_\e+\e w).$$
For any sequence $\e_m \to 0$ let  $\lambda_m:=\lambda_{\e_m}$ and $u_m:=\frac{u_{\e_m}}{\|u_{\e_m}\|_k}\in C$, so that
\begin{equation}\label{limit}
u_m=\lambda_m A\Big(u_m+\e_m \frac{w}{\| u_{\e_m}\|_k}\Big).
\end{equation}
Since $A$ is compact, we can take a sub-sequence if necessary and suppose that $(\lambda_m)$ and $(u_m)$ are convergent. Let $\lambda_0:=\lim_{m\to \infty} \lambda_m$ and $u_0:=\lim_{m \to \infty} u_m\ge 0$.  Taking limit in \eqref{limit}, we get 
$$u_0=\lambda_0 A(u_0)$$
The condition $\|u_m\|_k=1$ prevents $\lambda_0$ and $u_0$ to vanish. Thus, $u_0= \lambda_0A(u_0)>0$, and applying $L$ to the above equality we obtain $L(u_0)= \lambda_0 u_0$ with $\lambda_0>0$.

\vspace{0.2cm}
\noindent
{\bf Step 2.}  We shall now prove that  the multiplicity of $\l_0$ is one. 

As $L$ is  a real operator, it is enough to consider  a smooth non-identically zero real valued function $u$ on $M$,  such that $Lu=\l_0 u$. Furthermore, replacing $u$ by $-u$ if necessary, we shall assume that $u$ is somewhere positive.  Thus, letting
$$\chi=\sup\{\mu>0\mid u_0-\mu u\ge 0\ {\rm on}\ M\},$$
we then have $\chi>0$, $v:=u_0-\chi u\ge 0$ on $M$,  and hence $Lv=\l_0 v\ge 0$. 

By the Hopf maximum principle~\cite[III, Sect.~8, 3.71]{aubin},  we have that either  $v>0$ on $M$ or $v\equiv 0$. By the definition of $\chi$ we conclude  that $v\equiv 0$.

\vspace{0.2cm}
\noindent
{\bf Step 3.} We now show that for any other (complex) eigenvalue $\lambda$ of $L$, ${\rm Re}(\lambda)>\lambda_0$.

Let  $u\not\equiv  0$ be a complex-valued smooth function on $M$ with  $Lu=\l u$ and let $v:= u/u_0$. We then have
\begin{equation*}
\begin{split}
\lambda u_0 v = L(u_0v)& =  v L(u_0) + u_0 L(v) - c u_0v - 2g(du_0, dv) \\
                                             &= \lambda_0 u_0 v + u_0\Big(\Delta v   + g (\alpha - 2d  \log u_0, dv)\Big)\\
                                             &= \lambda_0 u_0 v + u_0 K(v),
\end{split}
\end{equation*}
where we have set
$$K(v):= \Delta (v) +    g (\alpha - 2d  \log u_0, dv).$$  
Dividing by $u_0$ the last equality, we have
$$K(v) =(\lambda - \lambda_0) v,$$    
and as $K$ is a real  operator
$$K(\bar v) =    (\bar \lambda - \lambda_0) \bar v.$$
It follows
\begin{equation}\label{constant}
\begin{split}
K(|v|^2) = K(v \bar v) & = v K(\bar v) + \bar v K(v) - 2g(dv, d\bar v) \\
                                     &= 2\Big({\rm Re}(\lambda) - \lambda_0\Big)|v|^2 - 2g(dv, d\bar v)\\
                                     & \le    2\Big({\rm Re}(\lambda) - \lambda_0\Big)|v|^2.
                                     \end{split}
\end{equation}                                                    
Suppose for contradiction that  ${\rm Re}(\lambda) - \lambda_0 \le 0$. Then, the Hopf maximum principle~\cite[III, Sect.~8, 3.71]{aubin} implies $|v|=const$, and therefore 
$$0=K(|v|^2)\le 2\Big({\rm Re}(\lambda) - \lambda_0\Big)|v|^2$$
showing that ${\rm Re}(\lambda) =\lambda_0$ (as $v=u/u_0 \not\equiv 0$).  Going back to \eqref{constant} (and using again $K(|v|^2)=0$), we have
\begin{equation*}
\begin{split}
0= & K(|v|^2)= -2g(dv, d\bar v) + 2\Big({\rm Re}(\lambda) - \lambda_0\Big)|v|^2\\
 = & -2||dv||^2_g
 \end{split}
 \end{equation*}
 i.e. $v=u/u_0$ is a constant. As $\lambda \neq \lambda_0$ by assumption, this is a contradiction.
 
\vspace{0.2cm}
\noindent
{\bf Step 4.} We finally have to prove that 
$$\l_0=\sup_{u\in\cal A}\inf_{x\in M}\frac{L(u)}{u}$$
where $\cal A=\{u\in\cal C^\infty(M)\mid u>0 \}$.

To this end, we shall show first that the formal conjugate operator $$L^*(u)=\Delta u -g(\alpha, du) + (\delta^g \alpha + c) u$$ of $L$ (with respect to the global $L_2$ product on $(M,g)$) has the same principal eigenvalue as $L$. Indeed, let $\lambda_0^*$ be the principal eigenvalue of $L^*$ and $u^*_0>0$ an eigenfunction. Then
$$\l_0\int_Mu_0u_0^* v_g =\int_ML(u_0) u_0^* v_g=\int_Mu_0L^*(u_0^*) v_g= \l_0^* \int_Mu_0u_0^* v_g.$$
Since $u_0>0$ and $u_0^*>0$, it follows that $\l_0=\l_0^*$.

Now, let 
$$\mu:= \sup_{u\in\cal A}\inf_{x\in M}\frac{L(u)}{u}. $$
We clearly have $\mu\ge \inf_{x\in M}\frac{L(u_0)}{u_0}=\l_0$. To establish the converse inequality, let $(w_m)$ be a maximizing sequence in $\cal A$ such that 
\begin{equation}\label{maximazing}
\mu-\frac{1}{m}\le \inf_{x\in M}\frac{L(w_m)}{w_m}\le \mu.
\end{equation}
It then follows
$$\l_0\int_M w_m u_0^*\, v_g=\int_M w_m L^*(u_0^*)\, v_g =\int_ML(w_m)u_0^*\,v_g \ge (\mu-\frac{1}{m})\int_Mw_nu_0^*\,v_g,$$
thus showing $\l_0\ge \mu$. \hfill$\Box$

The following result follows from the general Rellich--Kato theory, see e.g. \cite[II, Sect.~1.8; IV, Theorem 3.16,  and VII, Thm.~1.7]{kato} applied to the bounded operator $A$ constructed in the proof of Theorem~\ref{A:1}, by noting that the principal eigenvalue $\lambda_0$ of $L$ is simple and can be separated from the reminder of the spectrum.
\begin{thm}\label{A:2} Let $L(t)$ be an analytic  family of  linear strongly elliptic operators  as in Theorem~\ref{A:1}.  For each $t$,  denote by  $\lambda_0(t)$ the principal eigenvalue  of $L(t)$ with corresponding eigenfunction $u_t>0$,  normalized by $\int_M u_t^2 v_g=1$. Then $\lambda_0(t)$ and $u_t$ vary  analytically with respect to $t$.
\end{thm}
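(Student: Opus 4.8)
The plan is to deduce this from the Kato--Rellich analytic perturbation theory applied not directly to the unbounded operators $L(t)$, but to the bounded compact operators $A(t) := L(t)^{-1}$ (the inverse of $L(t)$ after a harmless shift $L(t) \mapsto L(t) - c_0\,\mathrm{Id}$ with $c_0$ chosen so that the shifted operators are all invertible; note the shift is uniform in $t$ on a compact parameter interval by continuity of the principal eigenvalue and simplicity, which bounds $L(t)$ away from having $0$ in its spectrum). The operators $A(t)$ act on a fixed Banach (indeed Hilbert) space $W^2_k(M)$, and the map $t \mapsto A(t)$ is analytic in the operator norm: this follows because $t \mapsto L(t)$ is analytic with values in the bounded operators $W^2_{k+2}(M) \to W^2_k(M)$, and inversion is an analytic operation on the open set of invertible operators (Neumann series / implicit function theorem in Banach algebras).

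The key structural input is that the principal eigenvalue $\lambda_0(t)$ of $L(t)$, shifted, corresponds to the eigenvalue $1/(\lambda_0(t) - c_0)$ of $A(t)$, and by Theorem~\ref{A:1}(i)--(ii) this is a \emph{simple} eigenvalue of $A(t)$ which is \emph{isolated} from the rest of the spectrum of $A(t)$: indeed part (ii) says every other eigenvalue $\lambda$ of $L(t)$ has $\mathrm{Re}(\lambda) > \lambda_0(t)$, so after the shift the corresponding eigenvalues of $A(t)$ have strictly smaller real part of the reciprocal... more carefully, one uses that $\lambda_0(t)$ is the eigenvalue of $L(t)$ with smallest real part, hence $1/(\lambda_0(t)-c_0)$ is the eigenvalue of $A(t)$ of largest modulus among those lying in the appropriate half-plane, and in any case can be enclosed by a small circle $\Gamma$ in $\mathbb{C}$ separating it from the remainder of $\sigma(A(t))$. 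For $t$ in a neighbourhood of a fixed $t_0$, one can choose $\Gamma$ uniformly, and then the Riesz spectral projection
$$
P(t) = \frac{1}{2\pi i}\oint_{\Gamma} (z - A(t))^{-1}\,dz
$$
is a rank-one projection depending analytically on $t$ (analyticity of the resolvent in $t$, uniformly on the compact contour $\Gamma$, gives analyticity of the contour integral). This is precisely the situation covered by Kato's book, \cite[II, Sect.~1.8; IV, Thm.~3.16; VII, Thm.~1.7]{kato}: a simple isolated eigenvalue of an analytic family of bounded operators, together with its eigenprojection, depends analytically on the parameter.

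From the analyticity of $P(t)$ one extracts the eigenvalue as $\mu(t) := \mathrm{tr}\big(A(t)P(t)\big)$ (the single point of $\sigma(A(t))$ inside $\Gamma$), which is analytic, hence $\lambda_0(t) = c_0 + 1/\mu(t)$ is analytic (here $\mu(t) \neq 0$ since $A(t)$ is injective). For the eigenfunction: pick any fixed vector $\xi \in W^2_k(M)$ with $P(t_0)\xi \neq 0$; then $v_t := P(t)\xi$ is an analytic $W^2_k(M)$-valued curve, nonzero near $t_0$, and is an eigenvector of $A(t)$, hence—after applying $A(t)$ and elliptic regularity, exactly as in Step~1 of the proof of Theorem~\ref{A:1}—a smooth everywhere positive (for the correct sign) eigenfunction of $L(t)$ for $\lambda_0(t)$. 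Finally, normalizing by $u_t := v_t / \|v_t\|_{L^2}$ preserves analyticity because $t \mapsto \|v_t\|_{L^2}^2 = \int_M |v_t|^2 v_g$ is a real-analytic strictly positive function (analytic dependence of $v_t$ on $t$ in $W^2_k \hookrightarrow C^0$), and division by its square root is analytic; a global analytic choice on the whole parameter domain follows by the usual connectedness/continuation argument since the eigenprojection is globally well-defined.

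The main obstacle is the bookkeeping at the two passages between unbounded and bounded operators: verifying that the shift $c_0$ can be chosen locally uniformly in $t$ so that $A(t)$ is defined and analytic, and verifying that ``isolated simple eigenvalue of largest real part for $L(t)$'' translates correctly into ``isolated simple eigenvalue enclosed by a fixed contour $\Gamma$ for $A(t)$'' so that the Riesz projection machinery of \cite{kato} applies verbatim. Once that translation is set up cleanly, the remaining steps are routine invocations of analytic perturbation theory and of the elliptic regularity already used in Theorem~\ref{A:1}.
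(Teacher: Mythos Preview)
Your proposal is correct and follows essentially the same approach as the paper: the paper simply invokes the Rellich--Kato theory \cite[II, Sect.~1.8; IV, Thm.~3.16; VII, Thm.~1.7]{kato} applied to the bounded operator $A$ constructed in the proof of Theorem~\ref{A:1}, noting that the principal eigenvalue $\lambda_0$ is simple and can be separated from the remainder of the spectrum. Your write-up is a careful unpacking of exactly that citation, with the Riesz projection and normalization details filled in.
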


\end{document}